\documentclass[11pt]{amsart}
\usepackage{amsmath, amsthm, amssymb,amscd}

\usepackage{graphicx}
\usepackage{tikz}
\usepackage{geometry} 
\usepackage[arrow,curve,matrix]{xy}
\usetikzlibrary{cd,calc,arrows}
\usepackage[english]{babel}
\newcommand{\C}{\mathbb{C}}

\newcommand{\F}{\mathcal{F}}

\newtheorem{thm}{Theorem}[section]
\newtheorem*{thm-nl}{Theorem}
\newtheorem*{prop-nl}{Proposition}
\newtheorem{definition}[thm]{Definition}

\def\GG{{\textbf G}}

\def\PP{{\textbf P}}
\def\OO{\mathcal{O}}
\def\cN{\mathcal{N}}

\def\cB{\mathcal{B}
\def\H{\mathcal{H}}}
\def\cA{\mathcal{A}}

\def\F{\mathcal{F}}
\def\cW{\mathcal{W}}

\def\cP{\mathcal{P}}
\def\kk{\mathbf{k}}

\def\J{\mathcal{J}}
\def\T{\mathcal{T}}
\def\cM{\mathcal{M}}
\def\cR{\mathcal{R}}

\def\cQ{\mathcal{Q}}
\def\H{\mathcal{H}}
\def\Pic0{{\rm Pic}^0(X)}

\newcommand{\Sym}{\operatorname{Sym}}

\newtheorem{cor}[thm]{Corollary}
\newtheorem*{cor-nl}{Corollary}

\newtheorem*{conjecture-nl}{Conjecture}
\newtheorem*{quest-nl}{Question}
\newtheorem*{quests-nl}{Questions}

\newtheorem{prop}[thm]{Proposition}

\theoremstyle{definition}

\newtheorem{ex}[thm]{Example}
\newtheorem{rmk}[thm]{Remark}
\newtheorem{question}[thm]{Question}

\title{{Syzygies and Koszul modules in geometry}}

\author[G. Farkas]{Gavril Farkas}
\address{Humboldt-Universit\"at zu Berlin, Institut f\"ur Mathematik,  Unter den Linden 6
\hfill \newline\texttt{}
 \indent 10099 Berlin, Germany} \email{{\tt farkas@math.hu-berlin.de}}

\begin{document}

\begin{abstract}
We describe the progress in the last 10 years related to Koszul modules and syzygies of algebraic varieties. Topics discussed include the general theory of Koszul modules and resonance varieties, applications to Chen ranks of K\"ahler and hyperplane arrangement groups (Suciu's Conjecture) and connections related to syzygies of algebraic curves. Developments related to Green's Conjecture, the Secant Conjecture and the Gonality Conjecture on the resolution of line bundles on algebraic curves are also presented. Open question are proposed throughout the text.
\end{abstract}

\maketitle

\section{Introduction}

Koszul modules and their associated resonance varieties, indirectly first appeared 100 years ago as \emph{Alexander-type} invariants in topology. The very beginning is probably the Alexander invariant of a knot \cite{Al}. If $K\subseteq S^3$ is a knot, we denote by $X:=S^{3}\setminus K$ its complement and consider $X^{\mathrm{ab}}$ its universal abelian cover. Then the homology $H_1(X^{\mathrm{ab}}, \mathbb Z)$ is a module over the group ring $\mathbb Z\bigl[H_1(X, \mathbb Z)\bigr]\cong \mathbb Z[t, t^{-1}]$ and the Alexander polynomial of the knot $K$ can be interpreted as the order of this module. This definition can be immediately generalized to any reasonable topological space $X$, and indeed, to any finitely generated group $G$ (for a space $X$, the group in question being the fundamental group $G=\pi_1(X)$). In this case, the \emph{Alexander invariant} $B(G)$ of the group becomes a module over the group ring $\mathbb C\bigl[G_{\mathrm{ab}}\bigr]$, see \cite{Mas}, \cite{PS2}. Considering the associated graded object $\mbox{gr} B(G)$ of the Alexander invariant with respect to the powers of the augmentation ideal $I\subseteq \mathbb C\bigl[G_{\mathrm{ab}}]$, it turns out that the rank of the abelian group $\mbox{gr}_q B(G)=I^q\cdot B(G)/I^{q+1}\cdot B(G)$ is equal to the \emph{Chen rank} $\theta_{q+2}(G)$ of the group, first considered by K.-T. Chen \cite{Ch} in the context of \emph{iterated integrals} on a manifold $X$ and arising from the lower central series of the metabelian quotient of the group $G$.

\vskip 4pt

The Chen invariants $\theta_q(G)$ of a group $G$ are fundamental invariants used to distinguish in subtle ways between groups of geometric nature.  It has therefore become of great interest to develop methods to make it possible to determine these invariants. In this context,  the definition of the \emph{infinitesimal Alexander invariant} $W(G)$ of the group $G$ has been put forward by Papadima and Suciu in \cite{PS1} and \cite{PS2}.\footnote{Note that in \cite{PS1} the notation $\mathfrak{B}(G)$ has been used for the infinitesimal Alexander invariant. This notation has been abandoned in the subsequent papers \cite{AFPRW2}, \cite{AFRS1}, \cite{AFRS2}.}  This is a graded module over the symmetric algebra $\mbox{Sym } H_1(G,\mathbb C)$ and its degree $q$ part $W_q(G)$ has the property that $\mbox{dim } W_q(G)\leq \theta_{q+2}(G)$, with equality if the group is $1$-formal in the sense of Sullivan \cite{Sul}. On the other hand, $W(G)$, being a graded module over a polynomial ring, is an object firmly rooted in commutative algebra and thus a large
variety of tools from homological algebra, derived categories and algebraic geometry become suddenly available in order to study its structure. The first instance of a non-trivial calculation of Chen ranks in Magnus' calculation \cite{Mag} of the Chen ranks of the free group $F_n$, even predating Chen's work.

\vskip 2pt

In this spirit, the study of Koszul modules, and even their definition, became more algebraic. This process,  initiated in \cite{PS2}, took off in the papers \cite{AFPRW1}, \cite{AFPRW2}, \cite{AFRS1} and \cite{AFRS2}, where results concerning the support and the Hilbert series of \emph{arbitrary} Koszul modules are established. The group $G$ is replaced with a pair $(V,K)$, where $V$ is a finite dimensional vector space and $K\subseteq \bigwedge^2 V$ is a linear subspace. The Koszul module $W(V,K)$  becomes now a graded module over the symmetric algebra $S=\mbox{Sym}(V)$ and as such, it is an object belonging to mainstream algebraic geometry. The general results on Koszul modules were then applied to a large variety of groups, like K\"ahler groups, mapping class groups and Torelli groups \cite{AFPRW2}, \cite{AFRS1}, or 
hyperplane arrangement groups \cite{AFRS2}. In this paper a wide ranging generalization of \emph{Suciu's Conjecture} on the Chen ranks of a hyperplane arrangement group is put forward and proved for arbitrary Koszul modules, whose resonance satisfies a number of natural geometric conditions inspired by Hodge theory.

\vskip 4pt

One of the  surprises of the theory of Koszul modules is that the new perspective they provide could be used to  make decisive progress on major questions on syzygies of algebraic varieties, even though these questions a priori have little to do with each other. Given a projective variety $X\subseteq \PP^r$ over an algebraically closed field  $\kk$ and embedded by a line bundle $L\in \mbox{Pic}(X)$, the minimal graded resolution of the section ring 
$$\Gamma_X(L):=\bigoplus_{q\geq 0} H^0\bigl(X, L^{q}\bigr)$$
viewed as a module over the symmetric algebra $S:=\mbox{Sym } H^0(X,L)$ establishes the language in which one can talk about the \emph{structure of  equations} in modern algebraic geometry. The information contained in this resolution is packaged in the \emph{graded Betti table} of the pair $(X,L)$, by placing the quantity $$b_{p,q}(X,L)=\mbox{dim } \mbox{Tor}_S^p \bigl(\Gamma_X(L),  \kk \bigr)_{p+q},$$ that is,  the number of $p$-syzygies of weight $q$,  in the $p$-th column and the $q$-th row of the Betti table. The modern theory of syzygies originates in the fundamental work of Hilbert \cite{Hil}, whose \emph{Syzygienstaz} made it possible to even think of the Betti table of a module and then was set on solid footing by Grothendieck, Mumford and others via  sheaf cohomology and concepts like Castelnuovo-Mumford regularity. 

\vskip 4pt

For algebraic curves, the connection between the abstract geometry of the curve (in the form of its linear systems) and the structure of its Betti tables has been subject to three major conjectures formulated by Green and Lazarsfeld \cite{GL2}. These are \emph{Green's Conjecture} concerning the Betti table of an arbitrary canonical curve $X\subseteq \PP^{g-1}$, the \emph{Secant Conjecture} describing the vanishing in the second row of the Betti table of a pair $(X,L)$, where $L$ is a non-special line bundle on $X$, and the \emph{Gonality Conjecture} describing the vanishing in the first row of the Betti table of $(X,L)$ in terms of the gonality of the curve. Green's Conjecture for general curves has been famously proved by Voisin in the early 2000's in two fundamental papers \cite{V1}, \cite{V2}. This has been then used by Farkas and Kemeny \cite{FK1} to establish the Secant Conjecture for a general pair $(X,L)$. Around the same time, Ein and Lazarsfeld \cite{EL2} proved the Gonality Conjecture for an arbitrary curve $X$ and for a line bundle $L$ of sufficiently high degree. An optimal effective version of the Gonality Conjecture for a general $k$-gonal curve of genus $g$ has been proved in \cite{FK4}.  This has been extended to arbitrary $k$-gonal curves of genus $g$ in Niu and Park's remarkable recent paper \cite{NP}. 

\vskip 4pt

Despite many partial results, see for instance \cite{AF1}, \cite{Ap}, Green's Conjecture for an arbitrary curve of genus $g$ remains open. In the last few years however, we witnessed major progress on our understanding of the syzygies of general canonical curves. Using essential input from the theory of Koszul modules, it has been possible to represent the Koszul cohomology groups of a certain canonical curve of maximal Clifford index as the components of a Koszul module corresponding to a Clebsch--Gordan decomposition, and thus deduce the Generic Green's Conjecture from a general vanishing theorem for Koszul module with trivial resonance \cite{AFPRW1}. In particular a version of the Generic Green's Conjecture in not too low positive characteristic could be established. Soon after this, several other proofs of the Generic Green's Conjecture appeared, including those of Raicu--Sam using a bigraded version for Koszul modules, or Kemeny's geometric proof \cite{K1} representing a major simplification of Voisin's Theorem \cite{V1}, \cite{V2} concerning the Koszul cohomology of a polarized K3 surface of Picard number one.  

\vskip 4pt

We close this introduction by mentioning also the striking progress on the syzygies of secant varieties of curves embedded in projective space by line bundles of sufficiently high degree \cite{ENP}.

\subsection*{Acknowledgments}
This paper is dedicated to the memory of \c S. Papadima who first introduced me to Koszul modules. Without his insight the connections between syzygies and Koszul modules would probably not have been discovered. I have greatly profited from discussions with M.~Aprodu, M.~Kemeny, R. Lazarsfeld, J. Park, C. Raicu, F.-O. Schreyer, A. Suciu, C.~Voisin and J. Weyman related to this circle of ideas. I am particularly grateful to M. Aprodu and C. Raicu for many useful comments on the first version of this paper. This work was supported by the Berlin Mathematics Research Center MATH+ and by the ERC Advanced Grant SYZYGY.  
    
\section{Koszul modules}
We discuss the algebraic definition of Koszul modules, largely following \cite{AFPRW1}, \cite{AFPRW2}, \cite{AFRS1} and \cite{AFRS2}. We fix  
an $n$-dimensional vector space $V$ over an algebraically closed field $\kk$ and set $S:=\mbox{Sym}(V)$. 
We fix throughout a linear subspace $K\subseteq \bigwedge^2 V$.

\vskip 4pt

We consider the Koszul differential
\[
\delta_p\colon \bigwedge^pV\otimes S\longrightarrow \bigwedge^{p-1}V\otimes S,
\]
\[
\delta_p(v_1 \wedge \cdots \wedge v_p \otimes f) = \sum_{j=1}^p (-1)^{j-1} v_1 \wedge \cdots \wedge \widehat{v_j} \wedge \cdots \wedge v_p \otimes v_j\cdot f.
\]
We have a decomposition $\delta_p=\bigoplus_q\delta_{p,q}$, where
$\delta_{p,q}\colon \bigwedge^p V \otimes \mbox{Sym}^q V \to \bigwedge^{p-1}V \otimes \mbox{Sym}^{q+1}V.$ The Koszul complex putting together all the differentials $\delta_p$  provides a minimal resolution by free graded $S$-modules of the residue field $\kk$. We denote by 
$$\pi \colon \bigwedge^3V \otimes S(-1)\longrightarrow \Bigl(\bigwedge^2V/K\Bigr)\otimes S$$ 
the morphism of graded $S$-modules induced after projection by the third differential $\delta_3$ in the Koszul complex. 

\begin{definition}\label{def:koszul_module}
The \emph{Koszul module} of the pair $(V,K)$ is the graded $S$-module defined as $W(V,K):=\mathrm{coker}(\pi)$.
\end{definition}
Equivalently, $W(V,K)$ is the graded 
$S$-module admitting the following presentation:
\begin{equation}
    \label{eqn:presentation_W}
    0\longrightarrow \bigwedge^3V\otimes S(-1)\stackrel{\pi} \longrightarrow \Bigl(\bigwedge^2V/K\Bigr)\otimes S 
    \longrightarrow W(V,K) \longrightarrow 0.
    \end{equation}

By using that the Koszul complex is exact, we can also represent the degree $q$ component $W_q(V,K)$ of the Koszul module as the following $\kk$-vector space
\begin{equation}\label{eq:def-Wq}
W_q(V,K)=\mbox{middle homology} \Bigl\{K\otimes \mathrm{Sym}^q V \stackrel{\delta_2}\longrightarrow
 V\otimes \mathrm{Sym}^{q+1} V\stackrel{\delta_1}\longrightarrow \mathrm{Sym}^{q+2} V  
\Bigr\}.
\end{equation}
From the exactness of the Koszul complex, one has $W_q\bigl(V, \bigwedge^2 V)=0$. 

\vskip 4pt

Via the Euler sequence on the projective space $\PP:=\PP\bigl(V^{\vee}\bigr)$
$$
0\longrightarrow \Omega_{\PP} \longrightarrow V\otimes \mathcal{O}_{\PP}(-1)\longrightarrow 
\mathcal{O}_{\PP}\longrightarrow 0,$$
we find, after taking cohomology of a twist, that 
$W_q(V,0)\cong H^0\bigl(\PP,\Omega_{\PP}(q+2)\bigr)$.
In particular, 
\begin{equation}\label{eq:Koszul_zero}
\dim W_q(V,0)=(q+1)\binom{n+q}{2+q}.
\end{equation}

The definition of a Koszul module is  functorial. An inclusion 
$K\subseteq K'\subseteq \bigwedge^2V$ induces a surjection at the level of Koszul modules $W(V,K)\twoheadrightarrow W(V,K')$. Also, it is easy to prove that if $W_q(V,K)=0$, then $W_{q'}(V,K)=0$ for all $q'\geq q$.

\vskip 4pt

The presentation (\ref{eqn:presentation_W}) and the description (\ref{eq:def-Wq}) can be sheafified, and we can introduce the \emph{Koszul sheaf}
$\cW(V,K)$ on $\PP$, defined by the following exact sequence
\begin{equation}\label{eq:Koszul_sheaf}
K\otimes \OO_{\PP}\stackrel{\delta_2}\longrightarrow \Omega_{\PP}(2)\longrightarrow \cW(V,K)\longrightarrow 0.
\end{equation}

\subsection{The BGG correspondence and Koszul modules} Even though this is not apparent from the papers \cite{AFPRW1}, \cite{AFPRW2}, \cite{AFRS2}, the definition of the Koszul module $W(V,K)$ in inspired by the \emph{BGG correspondence}. Recall that the Bernstein--Gelfand--Gelfand correspondence establishes an equivalence between the category of graded $E:=\bigwedge^{\bullet} V^{\vee}$-modules and the category of linear free complexes of graded $S=\mbox{Sym}(V)$-modules. To a graded $E$-module $P=\bigoplus_{i=0}^n P_i$, one associates a complex of graded $S$-modules
$$\mathbb L(P): \cdots \longrightarrow S\otimes_{\kk} P_{i}\longrightarrow S\otimes_{\kk} P_{i-1}\longrightarrow \cdots.$$
If $\widehat{P}$ is the dual $E$-module defined by $\widehat{P}_i:=(P_{-i})^{\vee}$, then one has an identification between $H_k\bigl(\mathbb L(P)\bigr)_{i+k}$ and $\mbox{Tor}_i^{E}\bigl(\widehat{P}, \kk\bigr)_{-i-k}$, see \cite[Theorem 7.8]{Eis2}.
One applies this correspondence to the following \emph{quadratic algebra} 
$$A(K):=E/\mathrm{ideal}(K^{\perp}),$$
where $K^{\perp}\subseteq \bigwedge^2 V^{\vee}$ can be identified with $\bigl(\bigwedge^2 V/K\bigr)^{\vee}$. Then one has the following isomorphism 
\begin{equation}\label{eq:BGG}
W_q(V,K)^{\vee}\cong \mbox{Tor}_{q+1}^{E}\bigl(A(K), \kk\bigr)_{-q-2},
\end{equation}  
see also \cite[Proposition 2.8]{PS2}. In particular, any result on Koszul modules can be rephrased in terms of resolutions of quadratic algebras over the exterior algebra $E$.  

\subsection{Resonance varieties} The support of the Koszul module $W(V,K)$ turns out to be a familiar object of algebro-geometric nature, namely the \emph{resonance variety} $\mathcal{R}(V,K)$ defined as the locus 
\begin{equation}
\label{eq:def-resonance}
\mathcal{R}(V,K)=\Bigl\{a\in V^\vee : \text{ there exists $b\in
V^\vee$ such that $a\wedge b\in K^\perp\setminus \{0\}$} \Bigr\}\cup \{0\}.
\end{equation}
The following fact was first observed in \cite[Lemma 2.4]{PS2}:

\begin{prop}
One has the set-theoretic equality $\mathrm{supp }\  W(V,K)=\cR(V,K)$
\end{prop}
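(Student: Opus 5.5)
The plan is to compute the fibre of $W(V,K)$ at a point $a\in V^\vee\setminus\{0\}$ and to show that it is nonzero exactly when $a\in\cR(V,K)$. Here $S=\mathrm{Sym}(V)$ is the coordinate ring of $V^\vee$. Since $W=W(V,K)$ is a finitely generated graded $S$-module, Nakayama's lemma gives $a\in\mathrm{supp}\,W$ if and only if $W\otimes_S \kk(a)\neq 0$, where $\kk(a)=S/\mathfrak m_a$ is the residue field at $a$. Because $W$ is graded, its support is a cone. So it suffices to treat nonzero $a$, and to check separately that $0$ lies in both sets whenever $W\neq 0$.

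First I would specialize the presentation (\ref{eqn:presentation_W}) at $a$. Right exactness of $\otimes_S\kk(a)$ identifies $W\otimes\kk(a)$ with the cokernel of
\[
\pi_a\colon \bigwedge^3 V\longrightarrow \bigwedge^2 V/K .
\]
The map $\pi_a$ is $\delta_3$ evaluated at $a$, followed by projection, so $\pi_a(u)=\iota_a(u) \bmod K$, where $\iota_a$ is contraction with the linear form $a$. Hence $W\otimes\kk(a)=0$ if and only if $\iota_a(\bigwedge^3V)+K=\bigwedge^2V$.

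Next I would dualize: this last equality holds if and only if $K^\perp\cap \bigl(\iota_a(\bigwedge^3V)\bigr)^\perp=0$. The key linear-algebra step is to identify the annihilator of the image of $\iota_a\colon\bigwedge^3V\to\bigwedge^2V$. It is $\{\omega\in\bigwedge^2V^\vee : a\wedge\omega=0\}$, because $\langle \omega,\iota_a u\rangle=\langle a\wedge\omega,u\rangle$. By the standard division lemma for the exterior algebra (for $a\neq 0$, the complex $\wedge a$ on $\bigwedge^\bullet V^\vee$ is exact), the condition $a\wedge\omega=0$ holds exactly when $\omega=a\wedge b$ for some $b\in V^\vee$. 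Therefore $W\otimes\kk(a)\neq 0$ if and only if some $a\wedge b\neq 0$ lies in $K^\perp$, which is precisely the condition $a\in\cR(V,K)$ in (\ref{eq:def-resonance}).

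Finally, I would check the origin. If $\cR(V,K)\neq\{0\}$, then $W\neq 0$ and $0$ lies in the support of this nonzero graded module. If $\cR(V,K)=\{0\}$, then the support is contained in $\{0\}$, and the convention in (\ref{eq:def-resonance}) includes $0$; the case $W=0$, i.e. $K=\bigwedge^2V$, is the degenerate case handled by the convention, where $K^\perp=0$. The main point requiring care is the sign-free identification of $\pi_a$ with contraction, together with the annihilator computation. Both follow from the exactness of the Koszul complex already used in the paper to derive (\ref{eq:def-Wq}).
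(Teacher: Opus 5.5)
Your proposal is correct and is essentially the paper's argument. Both proofs do a fibrewise Nakayama computation: the presentation map specialized at $a$ is contraction by $a$, and dualizing turns it into $b\mapsto a\wedge b$. The one difference is the presentation used. You work with the $\bigwedge^3 V$ presentation and need the division lemma ($a\wedge\omega=0\Rightarrow\omega\in a\wedge V^\vee$). The paper uses the sheafified presentation $K\otimes\OO_{\PP}\to\Omega_{\PP}(2)$, whose fibre map $K\to\ker(a)$ already builds in that exactness. Your explicit treatment of the origin, including the degenerate case $K=\bigwedge^2V$ where the support is empty, is slightly more careful than the paper's "affine cone" remark.
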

\begin{proof} 
Using the description (\ref{eq:Koszul_sheaf}), we observe that $\mbox{supp } W(V,K)$ is the affine cone over the sheaf-theoretic support of the Koszul sheaf $\cW(V,K)$. This can be described as the locus of those points $[a]\in \PP=\PP(V^{\vee})$ such that the fibre over $[a]$ of the morphism $\delta_2\colon K\otimes \OO_{\PP}\rightarrow \Omega_{\PP}(2)$ is not surjective. But this map is the contraction by $a$, given by $v_1\wedge v_2\mapsto a(v_1)v_2-a(v_2)v_1$, which in turn, can be viewed as the dual of the map $\wedge a\colon V^{\vee}\rightarrow K^{\vee}$. Therefore, $[a]\in \mbox{supp } \cW(V,K)$ if an only if the map $\wedge a$ is not injective, that is, $a\in\cR(V,K)$. 
\end{proof}

\subsection{Koszul modules with vanishing resonance}

The Koszul modules of finite length are precisely those for which the resonance variety vanishes. Thus $W_q(V,K)=0$ for $q\gg 0$ if and only if $\mathcal{R}(V,K)=\{0\}$. The paper \cite{AFPRW2} came out of an attempt to find an optimal constant $q$ for which the vanishing above should hold. This question is already posed in \cite{PS2}.

\vskip 3pt

By identifying the Grassmannian $\mathbf{G}:=\mbox{Gr}_2(V^{\vee})$ of $2$-dimensional quotients of $V$ in its Pl\"ucker embedding in $\PP\bigl(\bigwedge^2 V^{\vee}\bigr)$ with the space of rank $1$ tensors, one has the following equivalence:
\begin{equation}\label{eq:van_res}
\cR(V,K)=0\Longleftrightarrow \mathbf G\cap \PP\bigl(K^{\perp}\bigr)=\emptyset.
\end{equation} 

Since $\mbox{dim}(\mathbf{G})=2n-4$, for dimension reasons, if $\cR(V,K)=\{0\}$, then necessarily $$\mbox{dim}(K)\geq \mbox{dim}(\mathbf G)+1= 2n-3.$$ The borderline case $\mbox{dim}(K)=2n-3$ turns out to be particularly relevant. We quote the main result of \cite{AFPRW2}:

\begin{thm}\label{thm:va-resonance}
Let $V$ be a $\kk$-vector space of dimension $n\geq 3$ and let $K\subseteq\bigwedge^2 V$ be a subspace. Suppose $\mathrm{char}(\kk)=0$, or $\mathrm{char}(\kk)\geq n-2$. We have the following equivalence:
\begin{equation}\label{eq:thm-main-equiv}
\mathcal{R}(V,K) = \{0\} \Longleftrightarrow W_{n-3}(V,K) = 0.
\end{equation}
\end{thm}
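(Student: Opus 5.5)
The implication ``$\Leftarrow$'' is formal. If $W_{n-3}(V,K)=0$, then $W_q(V,K)=0$ for all $q\geq n-3$, as recalled above. Hence $W(V,K)$ has finite length, so its support is $\{0\}$, and by the Proposition $\mathrm{supp}\, W(V,K)=\cR(V,K)$ this gives $\cR(V,K)=\{0\}$. All the work is in ``$\Rightarrow$''. My plan is to first reduce to the borderline case $\dim K=2n-3$, and then prove vanishing in that case by a cohomological computation on $\PP=\PP(V^\vee)$.

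\emph{Reduction.} Suppose $\cR(V,K)=\{0\}$, i.e.\ $\GG\cap\PP(K^\perp)=\emptyset$ by (\ref{eq:van_res}), and let $k=\dim K\geq 2n-3$.
\begin{itemize}
\item Linear projection from $\PP(K^\perp)$ restricts to a finite morphism $\GG\to\PP(K^\vee)$, whose image has dimension $2n-4$.
\item A subspace $K'\subseteq K$ of dimension $2n-3$ corresponds to a linear subspace of $\PP(K^\vee)$ of dimension $k-2n+2$.
\item Since $(k-2n+2)+(2n-4)<k-1$, a general such linear subspace misses the image of $\GG$. Therefore $\GG\cap\PP(K'^\perp)=\emptyset$.
\end{itemize}
By functoriality, $W(V,K')\twoheadrightarrow W(V,K)$. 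So it suffices to treat $\dim K=2n-3$.

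\emph{Borderline case.} Since the Koszul sheaf vanishes, $\delta_2\colon K\otimes\OO_\PP\to\Omega_\PP(2)$ is surjective. Its kernel $F$ is then a vector bundle of rank $(2n-3)-(n-1)=n-2$, with $\det F\cong\OO_\PP(-(n-2))$, since $c_1(\Omega_\PP(2))=n-2$. Twisting $0\to F\to K\otimes\OO_\PP\to\Omega_\PP(2)\to 0$ by $\OO(q)$ and using $H^1(\OO(q))=0$, the presentation (\ref{eq:def-Wq}) identifies
\[
W_q(V,K)\cong H^1\bigl(\PP,F(q)\bigr).
\]
I would then compute $H^1(F(n-3))$ in two steps:
\begin{itemize}
\item Rewrite via $F\cong\bigwedge^{n-3}F^\vee\otimes\det F$ (rank $n-2$) and Serre duality.
\item Resolve exterior or symmetric powers of $F$ by the complexes obtained from the short exact sequence, whose other terms are $\bigwedge^iK\otimes\Sym^j\Omega_\PP(2)$ or their duals. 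Bott vanishing on projective space then kills the relevant cohomology.
\end{itemize}
The required cohomology of $\Sym^j\Omega_\PP(2)$ and $\bigwedge^i$ of it is governed by the Bott formulas. These formulas, and the splitting of the (divided/symmetric) power complexes, need $j!$ to be invertible for $j\le n-3$ (roughly). This is where the hypothesis $\mathrm{char}(\kk)=0$ or $\mathrm{char}(\kk)\geq n-2$ enters.

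\emph{Expected obstacle.} The hard step is this final cohomology chase, which has to show that \emph{every} term contributing to $H^1(F(n-3))$ vanishes in the chosen complex. If a direct chase leaves an undetermined term, my fallback is to argue as follows. The Euler characteristics $\chi(F(q))$ depend only on the Chern classes, hence are the same for all admissible $K$. One would then show $H^i(F(q))=0$ for $i\geq 2$ in the relevant range, so that $\dim W_{n-3}$ is constant over the open set of admissible $K$. It then suffices to exhibit one $K$ with $W_{n-3}=0$, and the natural candidate is the $\mathrm{SL}_2$-equivariant one. Take $V=\Sym^{n-1}U$ with $\dim U=2$, and let $K=\Sym^{2n-4}U\subseteq\bigwedge^2\Sym^{n-1}U$ be the Clebsch--Gordan summand. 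For this $K$ the resonance is trivial, because rank-one tensors of the form $a\wedge b$ cannot lie in $K^\perp$. Vanishing of $W_{n-3}$ would then follow from Hermite reciprocity and the known Koszul cohomology of the rational normal curve.
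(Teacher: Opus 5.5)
Your direction ``$\Leftarrow$'', the reduction to $\dim K=2n-3$, and the identification $W_q(V,K)\cong H^1(\PP,F(q))$ with $\mathrm{rk}\,F=n-2$ and $\det F\cong\OO_{\PP}(2-n)$ are all correct. Working on $\PP=\PP(V^\vee)$ with the kernel bundle $F$ is a genuinely different route from the paper's. The paper instead uses the Koszul complex of $\mathrm{ev}_K\colon K\otimes\OO_{\GG}\to\OO_{\GG}(1)$ on $\GG=\mathrm{Gr}_2(V^\vee)$, twisted by $\Sym^{n-3}\cQ$, together with Bott vanishing on $\GG$. However, your proposal stops exactly at the step that carries the whole theorem. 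The vanishing of $H^1(F(n-3))$ is only announced as a ``cohomology chase'' whose outcome you are unsure of, and the terms that must vanish are never identified. As written, this is a gap.

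The chase does close, and cleanly. Since $\mathrm{rk}\,F=n-2$, you have $F\cong\bigwedge^{n-3}F^\vee\otimes\det F$, hence $F(n-3)\cong\bigwedge^{n-3}F^\vee(-1)$; no Serre duality is needed. Dualizing gives $0\to T_{\PP}(-2)\to K^\vee\otimes\OO_{\PP}\to F^\vee\to 0$. So $\bigwedge^{n-3}F^\vee(-1)$ is resolved by the bundles $\bigwedge^{n-3-j}K^\vee\otimes D^j T_{\PP}(-2j-1)$ for $0\le j\le n-3$. Here $D^j=\Sym^j$ because $j!$ is invertible when $\mathrm{char}(\kk)=0$ or $\mathrm{char}(\kk)\ge n-2$. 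The dual Euler sequence $0\to\OO_{\PP}\to V^\vee\otimes\OO_{\PP}(1)\to T_{\PP}\to 0$ yields
\[
0\longrightarrow \Sym^{j-1}V^\vee\otimes\OO_{\PP}(-j-2)\longrightarrow \Sym^{j}V^\vee\otimes\OO_{\PP}(-j-1)\longrightarrow \Sym^{j}T_{\PP}(-2j-1)\longrightarrow 0.
\]
Since $1\le j+1<j+2\le n-1=\dim\PP$, both line bundles are acyclic. Hence $F(n-3)$ is acyclic and $W_{n-3}(V,K)=H^1(F(n-3))=0$. For $j=n-3$ one meets $\OO_{\PP}(1-n)$, the last acyclic twist, which is why the method lands exactly on $q=n-3$.

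Completed this way, your argument is more elementary than the paper's: it only uses acyclicity of $\OO_{\PP}(-1),\ldots,\OO_{\PP}(1-n)$ rather than Bott's theorem on $\GG$. It also gives slightly more, namely that $\delta_2\colon K\otimes\Sym^{n-3}V\to H^0(\Omega_{\PP}(n-1))$ is an isomorphism between spaces of the same dimension $(2n-3)\binom{2n-4}{n-1}$. The price is the preliminary reduction to $\dim K=2n-3$, which the determinant trick needs.

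Your fallback, by contrast, would not prove the theorem. Vanishing of $H^{\ge 2}(F(n-3))$ does not make $\dim W_{n-3}$ constant. Since $\chi(F(n-3))=0$, it only gives $h^1(F(n-3))=h^0(F(n-3))$, and $h^0$ is merely upper semicontinuous in $K$. So a single good $K$ gives $W_{n-3}(V,K)=0$ only for a \emph{general} $K$ with $\cR(V,K)=\{0\}$. That shows $\mathcal{D}_{\mathfrak{Kosz}}$ is a proper divisor, not that it is contained in $\mathcal{D}_{\mathfrak{Res}}$.

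Moreover, the proposed base case is not ``known Koszul cohomology of the rational normal curve''. With $n=i+3$, Theorem \ref{thm:kosz_tangdev} identifies $W_{n-3}(\Sym^{n-1}U,\Sym^{2n-4}U)$ with $K_{i,2}(\T,\OO_{\T}(1))$ for $g=2i+3$. That is the Generic Green Conjecture in odd genus, which the paper deduces \emph{from} the present theorem. Using it here is therefore circular, or rests on Voisin's theorem.
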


This result is established in \cite{AFPRW2} in characteristic zero and in \cite[Theorem 1.3]{AFPRW1} when $\mbox{char}(\kk)\geq n-2$. The strategy of the proof relies on regarding the $m$-dimensional subspace $K$ as a subspace of section of $H^0\bigl(\mathbf G, \OO_{\mathbf G}(1)\bigr)\cong \bigwedge^2 V$ and  interpreting the condition  $\cR(V,K)=\{0\}$ as saying that the evaluation map $\mathrm{ev}_K\colon K\otimes\mathcal{O}_\mathbf{G}\to \mathcal{O}_\mathbf{G}(1)$ is surjective on $\mathbf{G}$.

\vskip 4pt

Denoting by $\cQ$ the tautological rank $2$ vector bundle on $\mathbf{G}$, via the description (\ref{eq:def-Wq}), we observe that the have the identification $$H^0\bigl(\PP, \Omega_{\PP}(q+2)\bigr)\cong H^0\bigl(\GG, \mbox{Sym}^q\cQ(1)\bigr).$$
The vanishing $W_{n-3}(V,K)=0$ is then equivalent to the surjectivity of the multiplication map 
$$K\otimes H^0(\mathbf{G}, \mbox{Sym}^{n-3} \cQ)\longrightarrow H^0\bigl(\mathbf{G}, \mbox{Sym}^{n-3} \cQ(1)\bigr).$$ 
 
To the surjective morphism of sheaves $\mathrm{ev}_K$ we attach the Koszul complex 
\begin{equation}
\label{eqn:Koszul_finite}
\mathcal{K}^\bullet:\, 0\longrightarrow\bigwedge^mK\otimes \mathcal{O}_\mathbf{G}(1-m)\longrightarrow
\cdots \longrightarrow \bigwedge^2K\otimes\mathcal{O}_\mathbf{G}(-1)\longrightarrow K\otimes \mathcal{O}_\mathbf{G}
\stackrel{\mathrm{ev}_K}\longrightarrow \mathcal{O}_\mathbf{G}(1)\longrightarrow 0.
\end{equation}
 Twisting this complex by $\mbox{Sym}^q\cQ$ and taking hypercohomology, we obtain a  
spectral sequence 
\[
E_1^{-i,j}=\bigwedge^iK\otimes H^j\bigl(\mathbf{G}, \mbox{Sym}^q \cQ(1-i)\bigr)
\]
which abuts to zero. In this language, we have the following  identification 
$$W_{n-3}(V,K)=\mbox{Coker}\bigl\{E_1^{-1,0}\longrightarrow E_1^{0,0}\bigr\}.$$ Assuming by contradiction this map is not surjective, there must exist a non-zero differential in this spectral sequence. But, as explained in \cite{AFPRW2} (or in \cite{AFPRW1} in the case of positive characteristic) this is ruled out by the Bott vanishing theorem.

\begin{rmk}
In the case $\mbox{char}(\kk)$ is arbitrary, the optimal vanishing result is due to Raicu--VandeBogert \cite[Theorem 7.1]{RS}, showing that $W_{2n-7}(V,K)=0$, whenever $\cR(V,K)=0$.
\end{rmk}

\subsection{The Chow form of the Grassmannian}
The information contained in Theorem \ref{thm:va-resonance} in the borderline case $\mbox{dim}(K)=2n-3$ can be reinterpreted in geometric terms using the \emph{Chow form} of the Grassmannian 
$\mathbf{G}\subseteq \PP\bigl(\bigwedge^2 V\bigr)$. Precisely, one has two divisors on the parameter space $\mathrm{Gr}_{2n-3}\bigl(\bigwedge ^2V\bigr)$ of such subspaces $K$. The first one is the \emph{Koszul divisor}
\[
\mathcal{D}_\mathfrak{Kosz}:=\Bigl\{K\in\mathrm{Gr}_{2n-3}\bigl(\bigwedge ^2V\bigr): W_{n-3}(V,K)\ne 0\Bigr\}, 
\]
the second one is the \emph{resonance divisor}, that is,
\[
\mathcal{D}_\mathfrak{Res}:=\Bigl\{K\in\mathrm{Gr}_{2n-3}\bigl(\bigwedge ^2V\bigr): \PP(K^\perp)\cap \mathbf{G}\neq\emptyset\Bigr\},
\]
which can be regarded as the Chow form of $\mathbf{G}$. Theorem \ref{thm:va-resonance} can be formulated as the following set-theoretic equality
$\mathrm{Supp}(\mathcal{D}_\mathfrak{Res})=\mathrm{Supp}(\mathcal{D}_\mathfrak{Kosz})$. A more precise statement can be found in \cite[Theorem 3.4]{AFRW}, where one shows the following equality of divisors on $\mathrm{Gr}_{2n-3}\bigl(\bigwedge ^2V\bigr)$:
\begin{equation}
\label{eq:class-D2}
\mathcal{D}_\mathfrak{Kosz}=(n-2)\cdot \mathcal{D}_\mathfrak{Res}.
\end{equation}
The reason for the multiplicity $n-2$ in (\ref{eq:class-D2}) is fairly transparent. It is shown in \cite[Lemma 3.4]{AFRW} that for any such $K$ with $\cR(V,K)\neq \{0\}$, one has $\mbox{dim } W_{n-3}(V,K)\geq n-2$. It follows that $\mathcal{D}_{\mathfrak{Kosz}}-(n-2)\cdot \mathcal{D}_{\mathfrak{Res}}$ is an effective divisor and the equality (\ref{eq:class-D2}) shows that this divisor is in fact trivial.

\begin{rmk}
The equality of divisors (\ref{eq:class-D2}) is reminiscent of an equality of two effective divisors on the moduli space $\cM_{2n-3}$ of algebraic curves of genus $2n-3$. Precisely, denoting by $$\mathfrak{Syz}:=\Bigl\{[C]\in \cM_{2n-3}: K_{n-2,1}(C, \omega_C)\neq 0\Bigr\}$$ the Koszul divisor of curves with a non-trivial $(n-2)$-nd syzygy of weight one and by $$\mathfrak{Hur}:=\Bigl\{[C]\in \cM_{2n-3}: W^1_{n-1}(C)\neq \emptyset\Bigr\}$$ the Hurwitz divisors of curves having submaximal gonality, then one has the equality of effective divisors $\mathfrak{Syz}=(n-2)\cdot \mathfrak{Hur}$. The multiplicity $n-2$ in this equality can be explained via the Green-Lazarsfeld Non-Vanishing Theorem \cite{Gr}, which shows in this case that for each curve $[C]\in \mathfrak{Hur}$ one has in fact that $\mbox{dim } K_{n-2,1}(C, \omega_C)\geq n-2$. This fact has been first observed by Hirschowitz--Ramaman \cite{HR} and it plays a crucial role in Aprodu's sufficient condition \cite{Ap} for a curve to satisfy Green's Conjecture. It would be highly interesting to find a direct connection between this equality of divisors on $\cM_{2n-3}$  and the equality of divisors (\ref{eq:class-D2}) on the Grassmannian. 
 \end{rmk}

\subsection{Koszul modules of geometric nature} Even though Koszul modules originate from topology, their algebraic definition opens the way to consider situations arising in natural algebro-geometric contexts. Two such important instances are discussed in \cite{AFRW}. 

\vskip 3pt 
Let $E$ be a vector bundle on an algebraic variety $X$ and consider the determinant map
$$d\colon \bigwedge^2 H^0(X,E) \longrightarrow H^0\bigl(X, \bigwedge^2 E\bigr).$$
Setting $V:=H^0(X,E)^{\vee}$ and $K:=\mbox{Ker}(d)^{\perp}\subseteq \bigwedge^2 V$, we obtain the Koszul module 
$$W(X,E):=W(V,K)$$
associated to $E$. We now explain the information contained in this Koszul module in the case $E$ is globally generated. We define the  kernel bundle $M_E$ by the exact sequence on $X$
\begin{equation}\label{eq:kernel_bundle}
0 \longrightarrow M_E\longrightarrow H^0(X,E)\otimes \OO_X\stackrel{\mathrm{ev}}\longrightarrow E\longrightarrow 0.
\end{equation}
We have the following result, see \cite[Theorem 4.3]{AFRW}:

\begin{thm}
\label{thm:det}
Let  $E$ be a globally generated vector bundle on $X$. The components of the Koszul module associated to $E$ admit the following description
\[
W_q(X,E)^\vee \cong\mathrm{Ker}\Bigl\{H^1(X,\Sym^{q+2}M_E)\longrightarrow \Sym^{q+2}H^0(X,E)\otimes H^1(X,\mathcal O_X)\Bigr\}.
\]
In particular, if $H^1(X,\mathcal O_X)=0$, then $W_q(X,E)^\vee \cong H^1\bigl(X,\Sym^{q+2}M_E\bigr)$.
\end{thm}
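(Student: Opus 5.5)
We write $H:=H^0(X,E)$, so $V=H^\vee$ and $K^\perp=\mathrm{Ker}(d)\subseteq \bigwedge^2 H=\bigwedge^2V^\vee$. The plan is to dualize the description (\ref{eq:def-Wq}) of $W_q(V,K)$ and recognize the resulting complex in terms of the kernel bundle $M_E$ from (\ref{eq:kernel_bundle}).

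\emph{Dualizing.} Dualizing (\ref{eq:def-Wq}), and using that $K^\vee=\bigwedge^2H/K^\perp=\bigwedge^2H/\mathrm{Ker}(d)\cong \mathrm{Im}(d)$, we obtain
\[
W_q(V,K)^\vee=\text{middle homology}\Bigl\{\Sym^{q+2}H\longrightarrow H\otimes\Sym^{q+1}H\longrightarrow \mathrm{Im}(d)\otimes \Sym^q H\Bigr\}.
\]
Here the first map is the comultiplication and the second is the Koszul-type map $\bigwedge^2$-antisymmetrization followed by $d$. By exactness of the Koszul complex, the homology of the complex with $\bigwedge^2H$ in place of $\mathrm{Im}(d)$ vanishes. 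Hence $W_q^\vee$ is the kernel of the induced map
\[
\mathrm{Ker}\bigl(H\otimes\Sym^{q+1}H\to\bigwedge^2H\otimes\Sym^qH\bigr)/\Sym^{q+2}H\;\longrightarrow\; \cdots,
\]
or more cleanly, it equals the kernel of
\[
\mathrm{Coker}\bigl(\Sym^{q+2}H\to H\otimes \Sym^{q+1}H\bigr)\longrightarrow \mathrm{Im}(d)\otimes\Sym^qH .
\]
By exactness, $\mathrm{Coker}(\Sym^{q+2}H\to H\otimes\Sym^{q+1}H)$ embeds into $\bigwedge^2H\otimes\Sym^qH$ as the kernel $Z$ of the next Koszul map. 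Therefore
\[
W_q^\vee\cong Z\cap\bigl(\mathrm{Ker}(d)\otimes\Sym^qH\bigr).
\]

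\emph{Sheafifying.} On $X$, take the Koszul-type resolution obtained by taking symmetric powers of (\ref{eq:kernel_bundle}):
\[
0\to\Sym^{q+2}M_E\to\Sym^{q+2}H\otimes\OO_X\to \Sym^{q+1}H\otimes E\to \Sym^qH\otimes\bigwedge^2E\to\cdots,
\]
which is exact since $E$ is a vector bundle. Let $N_q$ be the image sheaf in $\Sym^{q+1}H\otimes E$. We get two short exact sequences,
\[
0\to\Sym^{q+2}M_E\to\Sym^{q+2}H\otimes\OO_X\to N_q\to0,
\]
\[
0\to N_q\to\Sym^{q+1}H\otimes E\to\Sym^qH\otimes\bigwedge^2E.
\]
The long exact cohomology sequence of the first gives
\[
\mathrm{Ker}\bigl(H^1(\Sym^{q+2}M_E)\to\Sym^{q+2}H\otimes H^1(\OO_X)\bigr)\cong H^0(N_q)/\Sym^{q+2}H,
\]
where we use $H^0(\Sym^{q+2}H\otimes\OO_X)=\Sym^{q+2}H$ for $X$ connected and proper. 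The second sequence identifies $H^0(N_q)$ with the kernel of
\[
H\otimes\Sym^{q+1}H\longrightarrow \Sym^qH\otimes H^0\bigl(\bigwedge^2E\bigr),
\]
a map which factors as the Koszul map to $\bigwedge^2H\otimes\Sym^qH$ followed by $d$. So
\[
H^0(N_q)/\Sym^{q+2}H\cong \mathrm{Ker}\bigl(Z\to H^0(\bigwedge^2 E)\otimes\Sym^qH\bigr)=Z\cap(\mathrm{Ker}(d)\otimes\Sym^qH),
\]
and this matches the dual computation above.

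\emph{Main obstacle.} The delicate step is checking that the dual Koszul maps in (\ref{eq:def-Wq}) agree with the maps in the symmetric-power resolution of $M_E$ up to sign and scalars. One also needs characteristic-free exactness of $\Sym^{q+2}$ of (\ref{eq:kernel_bundle}), which holds since it is locally split. I would first verify all of this on the trivial case $K=\bigwedge^2V$, where both sides vanish, and fix conventions that way. The final assertion of Theorem \ref{thm:det} is then immediate when $H^1(X,\OO_X)=0$.
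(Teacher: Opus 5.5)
Your argument is correct in characteristic zero and is essentially the intended proof. The paper gives no proof of its own and cites \cite[Theorem 4.3]{AFRW}. The target $\Sym^{q+2}H^0(X,E)\otimes H^1(X,\OO_X)$ in the statement is exactly $H^1$ of the middle term of your sequence $0\to\Sym^{q+2}M_E\to\Sym^{q+2}H\otimes\OO_X\to N_q\to 0$. Both sides then reduce to $Z\cap(\mathrm{Ker}(d)\otimes\Sym^qH)$, as you show.

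However, the assertion in your last paragraph is false: the symmetric-power complex of (\ref{eq:kernel_bundle}) is not exact in every characteristic. Local splitting only gives injectivity of $\Sym^{q+2}M_E\to\Sym^{q+2}H\otimes\OO_X$. Locally, with $H\otimes\OO_X\cong M_E\oplus E$, the rest of the complex is $\Sym^{\bullet}M_E$ tensored with the polynomial de Rham complex of $\Sym^{\bullet}E$. In characteristic $p$ that complex has (Cartier) cohomology. Already when $E$ has rank one and $x$ is a local lift of a generator, $x^p\mapsto p\,x^{p-1}\otimes x=0$. So for $p\le q+2$ the kernel of $\Sym^{q+2}H\otimes\OO_X\to\Sym^{q+1}H\otimes E$ is strictly larger than $\Sym^{q+2}M_E$.

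Your dualization step has the same defect. It uses $(\Sym^jV)^\vee\cong\Sym^jH$ and the exactness of $\Sym^{q+2}H\to H\otimes\Sym^{q+1}H\to\bigwedge^2H\otimes\Sym^qH$, both of which fail for $p\le q+2$. Thus your proof is valid for $\mathrm{char}(\kk)=0$ or $\mathrm{char}(\kk)>q+2$.

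To run it in arbitrary characteristic you must use divided powers, since $(\Sym^jV)^\vee=D^jH$. The complex $0\to D^{q+2}M_E\to D^{q+2}H\otimes\OO_X\to D^{q+1}H\otimes E\to D^{q}H\otimes\bigwedge^2E$ is exact in all characteristics. It is dual to the Koszul resolution $\cdots\to E^{\vee}\otimes\Sym^{q+1}V\otimes\OO_X\to\Sym^{q+2}V\otimes\OO_X\to\Sym^{q+2}M_E^{\vee}\to 0$. The same chase then gives the statement with $D^{q+2}$ in place of $\Sym^{q+2}$.
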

 
One can describe in geometric terms when the resonance $\cR(X,E)$ associated to the Koszul module $W(X,E)$ vanishes. For instance, if $E$ has rank two, then $\cR(X,E)=0$ if and only if there exist two independents sections $s_1, s_2\in H^0(X,E)$ which are proportional in each fibre of $E$. This happens if and only if $E$ admits a line subbundle $A\hookrightarrow E$ with $h^0(X,A)\geq 2$. In the case of curves, the components of the resonance $\cR(X,E)$ are described in detail in \cite[\S6]{AFRS1}.  

\vskip 4pt

\begin{question}
When $X$ is a $K3$ surface and $E$ is a Lazarsfeld-Mukai vector bundle on $X$, then from Theorem \ref{thm:det}, we have  $W^q(X,E)\cong H^1(X, \Sym^{q+2} M_E)^{\vee}$. In \cite{AFRW} one finds a description in terms of the lattice $\mbox{Pic}(X)$ when the corresponding resonance variety $\cR(X,E)$ vanishes. Can one understand the geometric implications of the vanishing provided by Theorem \ref{thm:va-resonance} in the context of Koszul modules on $K3$ surfaces?
\end{question}

An instance of Koszul modules where (at least in characteristic zero) the resonance is always trivial, is provided by \emph{Gaussian Koszul modules}. Let $L$ be a line bundle on a smooth projective variety $X$ and consider the Gaussian map
$$\psi_L \colon \bigwedge^2 H^0(X,L)\longrightarrow H^0(X,\Omega_X \otimes L^2), \ \ \mbox{ } \ \mbox{  }  \psi_L(f\wedge g)=f dg-g df.$$
When $L$ is very ample, Wahl \cite{W} related the cokernel of $\psi_L$ to the deformations of the cone over the embedded variety $X\stackrel{|L|}\hookrightarrow  \PP^r$ inside $\PP^{r+1}$. In particular, the corank of the Gaussian map of the canonical bundle can be used to distinguish whether a curve lies on a $K3$ surface. Arbarello--Bruno--Sernesi \cite{ABS} showed that the non-surjectivity of $\psi_{\omega_X}$ essentially characterizes those smooth curves that can be embedded in a $K3$ surface.

\vskip 4pt

Setting  $V:=H^0(X,L)^{\vee}$ and $K:=\mbox{Ker}(\psi_L)^{\perp}$, we obtain the Gaussian Koszul module
$$\mathcal{G}(X,L):=W(V,K).$$
The components of the Gaussian Koszul module have a description similar to that in Theorem \ref{thm:det} involving this time the bundle $R_L$ defined by the exact sequence 
$$
0 \longrightarrow  R_L \longrightarrow  H^0(X,L)\otimes \OO_X \stackrel{\mathfrak{Ta}}\longrightarrow J_1(L)\longrightarrow 0,
$$
where $J_1(L)$ is the jet bundle of $L$ and $\mathfrak{Ta}$ is the \emph{Taylor map} which locally assigns to each section of $L$ its value and its derivative at each point. It is shown in \cite[Theorem 5.2]{AFRW} that the components of the Gaussian module $\mathcal{G}(X,L)$ are given by
 \[
 \mathcal{G}_q(X,L)^{\vee} = \mbox{Ker}\Bigl\{H^1\bigl(X,\Sym^{q+2}\ R_L\bigr) \longrightarrow \Sym^{q+1} H^0(X, L) \otimes H^1(X, R_L)\Bigr\}.
 \] 
Applications of the Theorem \ref{thm:va-resonance} via Gaussian Koszul modules to the stabilization of the cohomology of thickenings of projective varieties are presented in \cite[Theorem 1.6]{AFRW}. 

\section{The Chen ranks of Koszul modules}

The very recent paper \cite{AFRS2} presents a major generalization of Theorem \ref{thm:va-resonance} and deals with Koszul modules with non-vanishing resonance. The resonance of Koszul modules of geometric or topological origin rarely vanishes, therefore all the components $W_q(V,K)$ of the corresponding Koszul module are non-zero. In spite of this, one is interested in computing their dimensions, that is, in determining the Hilbert series of the Koszul module $W(V,K)$. Inspired by \emph{Suciu's Conjecture} on hyperplane arrangement groups, an optimal solution to this problem has been put forward in \cite{AFRS2}.

\vskip 3pt

It turns out that the set-theoretic definition (\ref{eq:def-resonance}) of the resonance  $\cR(V,K)$ is not refined enough to determine the Hilbert series of $W(V,K)$ and, instead, one has to take into account the natural scheme structure of $\cR(V,K)$. The following definition appears first in \cite{AFRW} and has then been put to use in \cite{AFRS1} and \cite{AFRS2}. 

\begin{definition}\label{def:isotropic}
Let  $K\subseteq \bigwedge^2 V$ be a linear subspace as above. We say that,
\begin{enumerate}
\item the resonance $\cR(V,K)$ is  \emph{linear} if it is a union of linear subspace 
$$
\cR(V, K)=\overline{V}_1^{\vee}\cup \cdots \cup \overline{V}_k^{\vee},
$$
where each $\overline{V}_t^{\vee}\subseteq V^{\vee}$  corresponds to a quotient $V\twoheadrightarrow \overline{V}_t$.
\item The resonance $\cR(V,K)$ is \emph{isotropic}, if it is linear and 
$\bigwedge^2 \overline{V}_t^{\vee}\subseteq K^{\perp}$, for  $t=1, \ldots, k$.
\item The resonance $\cR(V,K)$ is \emph{separable}, if  
$$
\bigl(\overline{V}_t^{\vee}\wedge V^{\vee}\bigr) \cap K^{\perp}\subseteq 
\bigwedge^2 \overline{V}_t^{\vee}, \ \mbox{ for } t=1, \ldots, k.
$$
\item The resonance $\cR(V,K)$ is  \emph{strongly isotropic} is it is both isotropic and separable.
\end{enumerate}
\end{definition}

\begin{rmk}
Of these definitions, the one that requires getting used to is the one of separability. For instance, if $V^{\vee}=\langle e_1,e_2,e_3,e_4\rangle$ and one chooses $K^{\perp}=\langle e_1\wedge e_2, e_1\wedge e_3+e_2\wedge e_4\rangle \subseteq \bigwedge^2 V^{\vee}$, then the resonance $\cR(V,K)=\langle e_1,e_2\rangle=:\overline{V}^{\vee}$ is linear. The element $e_1\wedge e_3+e_2\wedge e_4\in K^{\perp}\cap \bigl(\overline{V}^{\vee}\wedge \overline{V}\bigr)$ does not lie however in $\bigwedge^2 \overline{V}^{\vee}$. Therefore, the resonance is not separable.
\end{rmk}

\begin{rmk}
The condition that a component $\overline{V}^{\vee}\subseteq V^{\vee}$ of the resonance $\cR(V,K)$ is separable can be expressed in a \emph{Petri like} condition. Indeed, if $\overline{n}=\mbox{dim}(\overline{V}^{\vee})$, we set $U:=\mbox{Ker} \bigl\{\pi\colon V\to \overline{V}\bigr\}$. We then fix a basis
$(e_1,\ldots,e_n)$ of $V^{\vee}$ such that
$(e_1,\ldots,e_{\overline{n}})$ is a basis for $\overline{V}^{\vee}$. Letting
$(v_1,\ldots,v_n)$ denote the dual basis of $V$,
we obtain a decomposition
$$
\bigwedge^2\, V = L \oplus M \oplus H, 
$$
where $L:= \mbox{span}\bigl\{ v_s \wedge v_t : s,t\leq \overline{n}\bigr\} \cong \bigwedge^2 \overline{V}$, 
$M:= \mbox{span}\bigl\{ v_s \wedge v_t : s\leq \overline{n} \mbox{ and }  t>\overline{n}\bigr\} \cong \overline{V}\otimes U$, and 
$H:= \mbox{span}\bigl\{ v_s \wedge v_t : s,t>\overline{n}\bigr\} \cong \bigwedge^2U$. With this notation in place, it is shown 
in \cite[\S3.1]{AFRS1} that an isotropic component $\overline{V}^{\vee}$ is separable of 
$\cR(V,K)$ if and only if the projection map
$$p_M\colon K\longrightarrow M$$ is surjective. In \cite{AFRS1} a more general condition in the absence of the isotropicity assumption is also provided. 
These conditions are then made explicit in the case of Koszul modules associated to vector bundles on curves, where surprisingly, the condition that the resonance be strongly isotropic is intimately connected with the notion of \emph{very stability} appearing in the study of the Hitchin system, see \cite{PPN} and references therein. 
\end{rmk}

\subsection{The scheme structure of the resonance}
One considers the annihilator of the Koszul module $I(V,K):=\mbox{Ann } W(V,K)\subseteq S$ and defines the \emph{projective resonance scheme}, by setting 
\[
{\bf{R}}(V,K):=\mbox{Proj}\bigl(S/I(V,K)\bigr).
\]
Note that the projective resonance ${\bf{R}}(V,K)$ is naturally a subscheme of the projective space $\PP=\PP(V^{\vee})$.   
We introduce the following linear section of the Grassmannian $\mathbf{G}=\mathrm{Gr}_2(V^{\vee})$
\begin{equation}\label{eq:linearsection}
\mathbf{B}(V,K):= \mathbf{G} \cap \mathbf{P}K^\perp,
\end{equation}
which is regarded as the base locus of the linear 
system $|K|$ on $\mathbf{G}$, where $K\subseteq H^0\bigl(\mathbf{G}, \mathcal{O}_{\mathbf{G}}(1)\bigr)$.

\vskip 4pt

The relation between $\mathbf{R}(V,K)$ and $\mathbf{B}(V,K)$ is explained by the  incidence correspondence

\[
\xymatrixcolsep{5pc}
\xymatrix{
\Xi \ar[r]^{\pi_2} \ar[d]^{\pi_1}& \mathbf{G}\\
\PP &
}
\]
where $\Xi=\bigl\{(p,L)\in \PP\times \mathbf{G}: p\in L\bigr\}\subseteq \PP\times \mathbf{G}$ can be regarded as the projectivization of the tautological rank $2$ quotient bundle $Q$ over $\mathbf{G}$. 

\vskip 3pt

One has the following set-theoretic equality established in \cite[Lemma 2.5]{AFRS1}
$$\mathbf{R}(V, K)=\pi_1\bigl(\pi_2^{-1}(\mathbf{B}(V,K))\bigr).$$
By definition,  ${\bf{R}}(V,K)$ is always uniruled, hence it cannot be an arbitrary subscheme of $\PP$. 

\begin{question}
Can one find a characterization of \emph{all} those subschemes of the projective space $\PP$ which appear as projective resonance schemes ${\bf{R}}(V,K)$?
\end{question}

\vskip 4pt

Theorem 1.1 from \cite{AFRS1} explains how the property of the scheme  ${\bf{R}}(V,K)$ being reduced can be translated in linear algebra terms by the Definition \ref{def:isotropic}. Precisely, one has the following:
\begin{thm}\label{thm:reduced}
Let $K\subseteq \bigwedge^2 V$ be as before and suppose that the resonance $\cR(V,K)$ is linear.
\begin{enumerate}
\item If $\cR(V,K)$ is separable, then the projective resonance ${\bf{R}}(V,K)$ is reduced and all its (linear) components are disjoint.
\item If moreover $\cR(V,K)$ is isotropic, then the projective resonance ${\bf{R}}(V,K)$ is reduced if and only if $\cR(V,K)$ is strongly isotropic.
\end{enumerate}
\end{thm}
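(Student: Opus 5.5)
The plan is to argue locally at a point $[a]$ of a linear component $\PP(\overline{V}_t^{\vee})\subseteq {\bf R}(V,K)$, and to bound the Zariski tangent space $T_{[a]}{\bf R}(V,K)$. Since ${\bf R}(V,K)$ is the scheme-theoretic support of the Koszul sheaf $\cW(V,K)$ from (\ref{eq:Koszul_sheaf}), it sits inside the degeneracy scheme $D\subseteq\PP$. Here $D$ is the locus where $\delta_2\colon K\otimes\OO_{\PP}\to\Omega_{\PP}(2)$ fails to be surjective, with the scheme structure given by the maximal minors (the Fitting ideal). This inclusion holds because $\mathrm{Fitt}_0\subseteq \mathrm{Ann}$. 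Hence
$$T_{[a]}{\bf R}(V,K)\subseteq T_{[a]}D.$$
As in the proof of the Proposition identifying $\mathrm{supp}\, W(V,K)$ with $\cR(V,K)$, the fibre of $\delta_2$ at $[a]$ is the transpose of $\wedge a\colon V^{\vee}/\langle a\rangle\to K^{\vee}$.

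The first step is to apply the standard first-order computation for degeneracy loci. A tangent vector $b\in V^{\vee}/\langle a\rangle$ lies in $T_{[a]}D$ only if the derivative of the map, composed between the kernel of $\wedge a$ and its cokernel, vanishes. Unwinding this, I expect to obtain
$$T_{[a]}D\subseteq\bigl\{b : a\wedge b\in K^{\perp}\bigr\}/\langle a\rangle.$$
To derive it concretely, I would choose coordinates with $a=e_1$, trivialize $\Omega_{\PP}(2)$ on the affine chart, and differentiate the matrix of $\delta_2$ along $a+\varepsilon b$. This step is the main technical point: the Fitting scheme is not the annihilator scheme, and the argument must be set up so that the inclusion ${\bf R}\subseteq D$ is the only one used.

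Part (1) then follows quickly. If $[a]\in\PP(\overline{V}_t^{\vee})$ and $a\wedge b\in K^{\perp}$, separability forces $a\wedge b\in\bigwedge^2\overline{V}_t^{\vee}$, hence $b\in\overline{V}_t^{\vee}$. Therefore
$$\dim T_{[a]}{\bf R}(V,K)\leq \dim \overline{V}_t^{\vee}-1=\dim \PP(\overline{V}_t^{\vee}).$$
Since the reduced scheme already contains $\PP(\overline{V}_t^{\vee})$ through $[a]$, the scheme ${\bf R}$ is smooth at $[a]$, of that dimension. In particular only one component can pass through $[a]$. This gives both reducedness and disjointness of the components.

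For part (2), one direction is (1). For the converse, assume $\cR(V,K)$ is isotropic but some component $\overline{V}^{\vee}$ is not separable. Using the decomposition $\bigwedge^2V=L\oplus M\oplus H$ and the criterion recalled above, $p_M\colon K\to M$ is then not surjective. I would pick a nonzero functional on $M\cong \overline{V}\otimes U$ vanishing on $p_M(K)$. This is a tensor in $\overline{V}^{\vee}\otimes U^{\vee}$, i.e. a linear map $\overline{V}\to U^{\vee}$, or dually a family of normal directions $b(a)\in V^{\vee}/\overline{V}^{\vee}$ depending linearly on $a\in\overline{V}^{\vee}$. Isotropy ($L\subseteq K^{\perp}$ up to the $H$-part) should let me lift this to an explicit first-order deformation $a+\varepsilon b(a)$ along which a nonzero section of $\cW$ survives, i.e. an element of $W$ localized at the component that is not killed by the ideal $U\cdot S$ but is killed by its square. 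This would show ${\bf R}$ is nonreduced along $\PP(\overline{V}^{\vee})$. Constructing this element is the delicate part of (2). I would first test it on the example $K^{\perp}=\langle e_1\wedge e_2,\ e_1\wedge e_3+e_2\wedge e_4\rangle$ from the Remark after Definition \ref{def:isotropic}, using the presentation (\ref{eqn:presentation_W}) in low degree, and then generalize.
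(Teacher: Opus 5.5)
The paper gives no proof of this theorem; it only quotes \cite{AFRS1}. Your proposal has a genuine gap in part (1), and part (2) is not yet a proof.

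\textbf{The tangent-space bound in part (1) is false.} Your reduction to $T_{[a]}\mathbf{R}(V,K)\subseteq T_{[a]}D$ via $\mathrm{Fitt}_0\subseteq\mathrm{Ann}$ is correct, but the bound you expect for $T_{[a]}D$ does not hold, and part (1) rests on it.

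The fibre of $\delta_2$ at $[a]$ has corank $c(a)=\dim\{b: a\wedge b\in K^{\perp}\}/\langle a\rangle$. For an isotropic separable component one has $\{b : a\wedge b\in K^\perp\}=\overline V_t^{\vee}$, so $c(a)=\dim\overline V_t-1$. This is at least $2$ as soon as $\dim\overline V_t\geq 3$. In that case the whole Fitting ideal lies in $\mathfrak{m}_{[a]}^2$, so $T_{[a]}D=T_{[a]}\PP$ and the Fitting scheme gives no information. In corank one, the first-order condition is $b\wedge c_0\in K^\perp+a\wedge V^\vee$, where $c_0$ spans the kernel of $\wedge a$. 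It is not $a\wedge b\in K^\perp$.

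Worse, the inclusion $T_{[a]}\mathbf{R}(V,K)\subseteq\{b: a\wedge b\in K^\perp\}/\langle a\rangle$ fails for $\mathbf{R}(V,K)$ itself in the non-isotropic case that (1) allows. Take $V^\vee=\langle e_1,\dots,e_4\rangle$ and $K^\perp=\langle e_1\wedge e_2,e_1\wedge e_3\rangle$. Then $\cR(V,K)=\langle e_1,e_2,e_3\rangle$ is linear and separable. Near $[e_2]$ one computes directly that $\mathrm{Ann}\,\cW(V,K)=(x_4)$, so the scheme is a reduced plane with a $2$-dimensional tangent space. Yet $\{b: e_2\wedge b\in K^\perp\}/\langle e_2\rangle=\langle e_1\rangle$ is one-dimensional. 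So no estimate of this shape can work, whether through $D$ or directly.

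\textbf{What works for (1): compute the annihilator directly.} Use the Petri-type reformulation recalled after Definition~\ref{def:isotropic}. Intrinsically, separability says that $K\cap(M\oplus H)\to M$ is onto, where $M\oplus H=\mathrm{Ker}\bigl\{\bigwedge^2V\to\bigwedge^2\overline V\bigr\}$.
\begin{enumerate}
\item On the chart $x_1=1$ around $[a]=[e_1]$, the sheaf $\Omega_{\PP}(2)$ is free on $w_j=\delta_2(v_1\wedge v_j)$ for $j\geq 2$, with $\delta_2(v_i\wedge v_j)=x_iw_j-x_jw_i$. Set $J=(x_{\overline n+1},\dots,x_n)$, the ideal of $\PP(\overline V^\vee)$.
\item Lifts $v_1\wedge v_t-h\in K$, with $h\in H$ and $t>\overline n$, map to $w_t$ modulo $J\cdot\langle w_{t'}\rangle_{t'>\overline n}$. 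By Nakayama, every such $w_t$ lies in the image $N$ of $K\otimes\OO_{\PP}$ near $[a]$.
\item Lifts of $v_s\wedge v_t$ then give $x_tw_s\in N$.
\end{enumerate}
Hence $J\cdot\Omega_{\PP}(2)\subseteq N$, that is, $J\subseteq\mathrm{Ann}\,\cW(V,K)$ near $[a]$. Since $J$ is prime and $\PP(\overline V^\vee)$ lies in the support, $\mathrm{Ann}=J$ locally. This gives reducedness and disjointness of the components at once.

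\textbf{Part (2).} For the converse, your text is a plan rather than a proof, and its key identification is off. A functional on $M\cong\overline V\otimes U$ lies in $\overline V^\vee\otimes U^\vee$, not in $\mathrm{Hom}(\overline V^\vee,U^\vee)$. So it does not give a family of normal directions $b(a)$.

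A route that works is the same computation modulo $J^2$ at the generic point $\eta$ of $\PP(\overline V^\vee)$. If $\mathbf{R}(V,K)$ is reduced, then $J\Omega_{\PP}(2)\subseteq N$ at $\eta$. By isotropy, the $w_s$-components ($2\leq s\leq\overline n$) of $J\Omega_{\PP}(2)/J^2\Omega_{\PP}(2)$ can only come from $M$-parts of elements of $K\otimes k(\eta)$ whose image in the fibre at $\eta$ is zero. This forces $p_M(K)\otimes k(\eta)\supseteq x_\eta^{\perp}\otimes U$, where $x_\eta^\perp\subseteq\overline V\otimes k(\eta)$ is the hyperplane of forms vanishing at $\eta$. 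Since $x_\eta$ is generic, this gives $p_M(K)=M$.
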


\vskip 4pt

The main result of \cite{AFRS2} describes in an effective way the Hilbert series of any Koszul module whose resonance is strongly isotropic. Note that this result is known to hold only in characteristic zero.

\begin{thm}
\label{thm:main}
Let $V$ be an $n$-dimensional complex vector space and let $K\subseteq \bigwedge^2 V$ 
be a subspace such that the resonance $\cR(V,K)=\overline{V}^{\vee}_1\cup \cdots \cup \overline{V}^{\vee}_k$ is strongly isotropic. Then 
\[
 \dim W_q(V,K) = \sum_{t=1}^k \dim\, W_q(\overline{V}_t,0),\mbox{ for  all } \  q\geq n-3.
 \]
\end{thm}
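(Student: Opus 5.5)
The plan is to compare $W(V,K)$ with the direct sum of the Koszul modules of the components, and then prove that the comparison is an isomorphism in degrees $q\geq n-3$.

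\emph{The comparison map.} For each $t$, write $\overline{V}_t$ for the quotient of $V$ and let $\overline{K}_t\subseteq \bigwedge^2\overline{V}_t$ be the image of $K$. Isotropy, $\bigwedge^2\overline{V}_t^{\vee}\subseteq K^{\perp}$, says exactly that $K$ maps to zero in $\bigwedge^2\overline{V}_t$, so $\overline{K}_t=0$. Functoriality of Koszul modules then gives surjections of graded $S$-modules $W(V,K)\twoheadrightarrow W(\overline{V}_t,0)$, where $W(\overline{V}_t,0)$ is regarded as an $S$-module via $S\to \mathrm{Sym}(\overline{V}_t)$. Collecting these gives
\[
\Phi\colon W(V,K)\longrightarrow \bigoplus_{t=1}^k W(\overline{V}_t,0).
\]
The theorem follows once $\Phi_q$ is shown to be an isomorphism for $q\geq n-3$. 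I would sheafify and work on $\PP=\PP(V^{\vee})$ with the Koszul sheaf $\cW(V,K)$ of (\ref{eq:Koszul_sheaf}). By Theorem \ref{thm:reduced}, strong isotropy implies that ${\bf R}(V,K)$ is reduced and is the disjoint union of the linear spaces $\PP(\overline{V}_t^{\vee})$. The local step is to show, using separability in its Petri form (the projection $K\to M$ is surjective), that near each component $\cW(V,K)$ is isomorphic to $\Omega_{\PP(\overline{V}_t^{\vee})}(2)$ pushed forward. Concretely, in the splitting $\bigwedge^2V=L\oplus M\oplus H$, the $M$-part of $K$ kills the conormal directions to first order. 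Consequently $\Phi$ sheafifies to an isomorphism $\cW(V,K)\cong \bigoplus_t \iota_{t*}\Omega_{\PP_t}(2)$, so $\Phi_q$ is an isomorphism for $q\gg 0$.

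\emph{Effective bound $q\geq n-3$.} Here I would imitate the proof of Theorem \ref{thm:va-resonance}. Instead of a surjective evaluation map $K\otimes\OO_{\GG}\to\OO_{\GG}(1)$, there is now a base locus $\mathbf{B}(V,K)=\bigsqcup_t \mathrm{Gr}_2(\overline{V}_t^{\vee})$. Isotropy gives the containment, and separability gives that the base locus is scheme-theoretically equal to this disjoint union of sub-Grassmannians, with $K$ generating the ideal sheaf $\mathcal{I}_{\mathbf B}(1)$. I would resolve $\mathcal{I}_{\mathbf B}(1)$ by the Koszul complex of $K$ and correct for the excess: the complex is no longer exact, and its homology is supported on the $\mathrm{Gr}_2(\overline{V}_t^{\vee})$ and computed from the normal bundles. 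Then I would twist by $\mathrm{Sym}^q\cQ$ and run the hypercohomology spectral sequence. The cokernel of $K\otimes H^0(\GG,\mathrm{Sym}^q\cQ)\to H^0(\GG,\mathrm{Sym}^q\cQ(1))$ should split as the expected term $\bigoplus_t H^0\bigl(\mathrm{Gr}_2(\overline{V}_t^{\vee}),\mathrm{Sym}^q\cQ(1)\bigr)=\bigoplus_t W_q(\overline{V}_t,0)$ plus error terms. These error terms are built from $H^j\bigl(\GG,\mathrm{Sym}^q\cQ(1-i)\bigr)$ and from cohomology on the sub-Grassmannians, and they must vanish for $q\geq n-3$ by Bott vanishing, exactly as in the trivial-resonance case. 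This is where the characteristic-zero hypothesis enters.

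\emph{Main obstacle.} The hardest step is controlling the non-exact Koszul complex near the base locus. One has to show that the excess homology contributes nothing in degrees $q\geq n-3$, rather than only for $q\gg0$. If the direct approach fails, my first fallback would be to blow up $\GG$ along $\mathbf{B}(V,K)$, where $K$ becomes base-point free onto $\OO(1)(-E)$. On the blow-up the Koszul complex is exact again, and I would then prove Bott-type vanishing on the blow-up using the explicit normal bundles of the sub-Grassmannians, which are governed by $\overline{V}_t\otimes U_t$. The surjectivity $K\to M$ is what should make these normal-direction contributions cancel.
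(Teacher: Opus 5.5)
Your architecture matches the paper's. Your $\Phi$ is the natural morphism (\ref{eq:decomposition}). Strong isotropy is used to make $\mathbf{B}(V,K)$ the disjoint union of the sub-Grassmannians $\GG_t=\mathrm{Gr}_2(\overline{V}_t^{\vee})$, cut out scheme-theoretically by $K$. Here separability is exactly the surjectivity of $K\to H^0\bigl(\GG_t,N_t^{\vee}(1)\bigr)=U_t\otimes\overline{V}_t\cong M$, where $U_t=\ker(V\to\overline{V}_t)$, $\cQ_t=\cQ|_{\GG_t}$, and $N_t=U_t^{\vee}\otimes\cQ_t$ is the normal bundle. The paper (following \cite{AFRS2}) then blows up $\GG$ along $\mathbf{B}(V,K)$ and runs the hypercohomology spectral sequence of the exact Koszul complex of $K\otimes\OO_{\widetilde{\GG}}\to\pi^*\OO_{\GG}(1)(-E)$, twisted by $\pi^*\Sym^q\cQ$. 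So your fallback is the paper's main line. Your sheaf-theoretic argument for $q\gg0$ (Theorem \ref{thm:reduced} plus the first-order computation along $\PP(\overline{V}_t^{\vee})$) is sound, but it only gives the non-effective statement.

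The gap is that the step carrying the whole content of the theorem, the vanishing that produces the threshold $q\geq n-3$, is only asserted. The reason you offer (``Bott vanishing, exactly as in the trivial-resonance case'') does not apply. Injectivity of $\Phi_q$ is the surjectivity of $K\otimes H^0(\GG,\Sym^q\cQ)\to H^0\bigl(\GG,\mathcal{I}_{\mathbf{B}}\otimes\Sym^q\cQ(1)\bigr)$. Surjectivity of $\Phi_q$ is equivalent to $H^1\bigl(\GG,\mathcal{I}_{\mathbf{B}}\otimes\Sym^q\cQ(1)\bigr)=0$. On $\widetilde{\GG}$ both follow from the vanishing of $H^{j}\bigl(\widetilde{\GG},\pi^*\Sym^q\cQ(1-i)\otimes\OO_{\widetilde{\GG}}((i-1)E)\bigr)$ for $j\in\{i-1,i\}$ and $2\leq i\leq\dim K$. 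These groups involve positive multiples of the exceptional divisor. Set $\overline{n}_t=\dim\overline{V}_t$ and $c_t=\mathrm{codim}\,\GG_t=2(n-\overline{n}_t)$. Once $i-1\geq c_t$, the sheaves $R^{c_t-1}\pi_*\OO_{\widetilde{\GG}}((i-1)E)$ are non-zero, supported on $\GG_t$, and built from the bundles $\Sym^{a}N_t\otimes\det N_t$. Such $i$ exist whenever $\dim K>c_t$, for instance whenever some $\overline{n}_t\geq 3$, because separability forces $\dim K\geq\dim M=\overline{n}_t(n-\overline{n}_t)$. To handle these you must decompose them by the Cauchy formula, apply Bott's theorem on each $\GG_t$ after twisting by $\Sym^q\cQ_t(1-i)$, and check that nothing survives in the relevant diagonals exactly when $q\geq n-3$. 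That computation is where the sharp bound has to come from, and it is absent from your proposal.

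Your direct route on $\GG$ meets the same computation in another form. The excess Koszul homology along $\GG_t$ is $\bigwedge^{j}\mathcal{E}_t$ (suitably twisted), where $\mathcal{E}_t=\ker\bigl(K\otimes\OO_{\GG_t}\to U_t\otimes\cQ_t\bigr)\cong\ker(p_M)\otimes\OO_{\GG_t}\oplus U_t\otimes\mathcal{S}_t$ and $\mathcal{S}_t=\ker\bigl(\overline{V}_t\otimes\OO_{\GG_t}\to\cQ_t\bigr)$. Separability makes this bundle homogeneous, so Bott can be brought to bear. You would still have to compute these contributions and show they vanish precisely for $q\geq n-3$. The remark that ``the surjectivity $K\to M$ makes the normal contributions cancel'' does not do this.
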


In the case $\cR(V,K)=\{0\}$,  Theorem \ref{thm:main} reduces to Theorem \ref{thm:va-resonance}, that is,  to the sharp vanishing statement $W_q(V,K)=0$ for $q\geq n-3$. Theorem \ref{thm:main} can be regarded as a general \emph{Chen ranks formula} for Koszul modules generalizing and providing an effective version of  Suciu's Conjecture \cite{Su1} for the Chen ranks of hyperplane arrangement groups. 

\vskip 3pt
The proof of Theorem \ref{thm:main} is based first on the fact that if $\cR(V,K)=\overline{V}_1^{\vee}\cup \ldots \cup \overline{V}_k^{\vee}$ 
is strongly isotropic as in the Definition \ref{def:isotropic}, then the base locus ${\bf{B}}(V,K)$ defined by (\ref{eq:linearsection}) is a disjoint union of sub-Grassmannians 
\[
{\bf{B}}(V,K)=\mathbf{G}_1 \cup \cdots \cup \mathbf{G}_k,
\]
where $\mathbf{G}_t=\mbox{Gr}_2(\overline{V}_t^{\vee}) \subseteq \mathbf{G}$. One blows the Grasmannian $\mathbf{G}$ along base locus ${\bf{B}}(V,K)$ and on the blown-up Grassmannian $\widetilde{\mathbf{G}}$ one writes down a Koszul complex and an associated hypercohomology spectral sequence, broadly along the lines of the proof in \cite{AFPRW1} of Theorem \ref{thm:va-resonance}.  The formula in Theorem \ref{thm:main} appears as a result of having ultimately constructed 
 a natural morphism of graded $S$-modules
\begin{equation}
\label{eq:decomposition}
W(V,K)\longrightarrow \bigoplus_{t=1}^k W(\overline{V}_t,0)
\end{equation}
which turns out to be an isomorphism for degrees $q\geq n-3$.

\section{Chen ranks in geometry and topology}

We now discuss some of the connections between Koszul modules and topological invariants of groups. As explained in the Introduction, this has been one of the main reason for developing a theory of Koszul modules in the first place. For general references to this circle of ideas we refer to \cite{AFPRW2}, \cite{PS1}, \cite{PS2}.

\vskip 4pt

Let $G$ be a finitely generated group $G$. By taking successive commutators, one defines its \emph{lower central series}
$$G=\Gamma_1(G)\supseteq \Gamma_2(G)\supseteq \cdots \supseteq \Gamma_q(G)\supseteq \cdots,$$
where $\Gamma_{q+1}(G)=\bigl[\Gamma_q(G), \Gamma_1(G)\bigr]$. The \emph{lower central ranks} of $G$  are defined as the quantities 
$$\phi_q(G):=\mbox{rk } \mbox{gr}_q(G)=\mbox{rk }\Gamma_q(G)/\Gamma_{q+1}(G).$$
In general computing the ranks $\phi_q(G)$ is difficult and it is only in rare instances that commutative algebra, or algebraic geometry can contribute to directly determining these invariants.

\vskip 4pt

In order to make the problem more manageable, one passes instead to the \emph{metabelian quotient} of $G$. We denote by $G':=[G,G]$ the commutator subgroup of $G$ and set $G'':=[G',G']$. Then the \emph{metabelian quotient} of $G$ is the quotient group  $G/G''$,  fitting into the exact sequence 
\begin{equation}\label{eq:Alexander_inv}
1\longrightarrow G'/G''\longrightarrow G/G''\longrightarrow G/G'\longrightarrow 1,
\end{equation}
where we note that both groups $G/G'=:G_{\mathrm{ab}}$ and $G'/G''=:G'_{\mathrm{ab}}$ are abelian. 

\begin{definition}
The \emph{Chen invariants} of a group $G$ are defined as
 $$\theta_q(G):=\phi_q\bigl(G/G''\bigr).$$ 
\end{definition}

\begin{ex} 
One has surjective morphisms $\mbox{gr}_q(G)\twoheadrightarrow \mbox{gr}_q(G/G'')$, which is an isomorphism for $q\leq 3$. It follows that $\theta_q(G)\leq \phi_q(G)$, with equality when $q\leq 3$.
\end{ex}

The invariants $\theta_q(G)$ were introduced by Chen in \cite{Ch}, who later in \cite{Ch2} provided several ways to compute them in principle. To offer one way of understanding his work, we introduce the Alexander invariant, following \cite{PS1}:

\begin{definition}
The Alexander invariant of the group $G$ is defined as $B(G):=G'/G''$ regarded as a module over the group ring $\mathbb Z\bigl[G/G'\bigr]$, with the action being given by conjugation via the sequence (\ref{eq:Alexander_inv}).
\end{definition}

The Alexander invariant $B(G)$ admits a filtration with respect to the powers of the augmentation ideal
$I:=\mbox{Ker}\bigl\{\mbox{deg}\colon \mathbb Z\bigl[G/G'\bigr]\rightarrow \mathbb Z\bigr\}$. Massey \cite{Mas} showed that one has the following identification 
$$\mbox{gr}_q B(G)=I^q\cdot B(G)/I^{q+1}\cdot B(G)\cong \mbox{gr}_{q+2}\bigl(G/G''\bigr).$$

In particular, the Chen ranks of $G$ can also be computed via the Alexander invariant, which being a module over a commutative ring is more amenable to computations. The very definition of the Koszul module $W(G)$ of the group $G$ is an algebraization of the \emph{rational holonomy Lie algebra} of Chen \cite{Ch2}.

\vskip 4pt

Putting together work of Dimca--Hain--Papadima \cite{DHP}, Massey \cite{Mas} and Papadima--Suciu \cite{PS2}, one obtains a surjection $W(G)\twoheadrightarrow \mbox{gr } B(G)_{\mathbb C}$, which implies the inequality
\begin{equation}\label{eq:ineq_koszul}
\theta_{q+2}(G)=\mbox{rk } \mbox{gr}_q B(G)\leq \mbox{dim } W_q(G),
\end{equation}
with equality if the group $G$ is $1$-formal in the sense of Sullivan \cite{Sul}. Informally, a space $X$ is $1$-formal if its rational homotopy type is determined by its cohomology algebra. K\"aher groups, or hyperplane arrangement groups are all known to be $1$-formal, see again \cite{PS1} and references therein. Note that in this topological setting $G=\pi_1(X)$. 

\begin{ex}
Suppose $G=F_n$ is the free group on $n$ generators. Then 
$$\theta_q(F_n)=\mbox{dim } W_{q-2}\bigl(H_1(F_n,\mathbb C), 0\bigr).$$ Using (\ref{eq:Koszul_zero}), we find $\theta_q(F_n)=(q-1){n+q-2\choose q}$. This formula appears first in Chen's paper \cite{Ch}. 
\end{ex}

\subsection{Vanishing Chen invariants} Using the concepts introduced above, one has the following topological reformulation of Theorem \ref{thm:va-resonance}, see also \cite[Theorem 4.2]{AFPRW2}:
\begin{thm}\label{thm:fundamental}
Let $G$ be the fundamental group of a compact K\"ahler manifold with $n=b_1(G)$. If $G/G''$ is nilpotent, then $\theta_q(G)=0$, for $q\geq n-1$.
\end{thm}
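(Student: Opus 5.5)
The plan is to reduce the statement to Theorem \ref{thm:va-resonance} applied to the pair $(V,K)$ attached to $G$. Here $V:=H_1(G,\mathbb C)$, so $\dim V=n$, and $K\subseteq \bigwedge^2 V$ is the kernel of the dual of the cup product $\bigwedge^2 H^1(G,\mathbb C)\to H^2(G,\mathbb C)$. With these choices $W(G)=W(V,K)$, $K^\perp$ is the kernel of the cup product map, and $\mathcal R(V,K)$ is the first resonance variety of $G$. Since $G$ is a K\"ahler group it is $1$-formal, so equality holds in (\ref{eq:ineq_koszul}), namely $\theta_{q+2}(G)=\dim W_q(V,K)$. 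The statement $\theta_q(G)=0$ for $q\geq n-1$ is therefore equivalent to $W_{q}(V,K)=0$ for $q\geq n-3$. Because vanishing of one graded piece propagates to all higher degrees, it suffices to prove $W_{n-3}(V,K)=0$. By Theorem \ref{thm:va-resonance}, which applies since $\kk=\mathbb C$ has characteristic zero, this is equivalent to $\mathcal R(V,K)=\{0\}$.

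The key step is therefore to deduce vanishing resonance from the nilpotency of $G/G''$. First, $G/G''$ nilpotent means $\Gamma_q(G/G'')=1$ for $q\gg 0$, so $\theta_q(G)=0$ for $q\gg0$. Using $1$-formality again, $W_q(V,K)=0$ for $q\gg 0$, so $W(V,K)$ has finite length. Since $\operatorname{supp} W(V,K)=\mathcal R(V,K)$ by the Proposition in \S2.2, finite length forces $\mathcal R(V,K)=\{0\}$.

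The equality in (\ref{eq:ineq_koszul}) must hold for every $q$ and not just asymptotically. This is exactly the content of $1$-formality via Dimca--Hain--Papadima and Papadima--Suciu, which the paper quotes, so it can be taken as given. The remaining obstacle is the small-$n$ cases, where Theorem \ref{thm:va-resonance} requires $n\geq 3$. For $n\leq 2$ I would argue directly. If $n\le 1$, then $\bigwedge^2V=0$ and $W=0$. If $n=2$, then $\bigwedge^2 V$ is one-dimensional. If $K=0$, the resonance is all of $V^\vee$, contradicting finite length. Otherwise $K=\bigwedge^2V$ and $W(V,K)=0$, so $\theta_q(G)=0$ for all $q\geq 2\geq n-1$. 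In every case the conclusion $\theta_q(G)=0$ for $q\ge n-1$ follows.
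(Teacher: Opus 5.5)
Your proof is correct and is essentially the argument behind the paper's presentation of this result as a topological reformulation of Theorem~\ref{thm:va-resonance}. In that argument, $1$-formality gives $\theta_{q+2}(G)=\dim W_q(G)$, nilpotency of $G/G''$ forces $W(G)$ to have finite length and hence $\mathcal{R}(G)=\{0\}$, and the sharp vanishing $W_{n-3}(G)=0$ finishes. There are two small slips. First, $K$ should be the \emph{image} of the dual of the cup product $\bigwedge^2H^1(G,\mathbb C)\to H^2(G,\mathbb C)$, not its kernel; this is what makes $K^{\perp}$ the kernel of the cup product, as you then use. Second, for $n\le 2$ you only obtain $\theta_q(G)=0$ for $q\geq 2$, not for all $q\geq n-1$ as you claim. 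That is all that is true, since $\theta_1(G)=b_1(G)$, so the statement should be read with $n\geq 3$ (or $q\geq 2$).
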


Assume $X$ is a compact K\"ahler manifold and set $G=\pi_1(X)$. Then $H^1(G,\mathbb C)=H^1(X,\mathbb C)$ and the resonance $\cR(X):=\cR(\pi_1(X))$ does not vanish if and only if there exist non-proportional holomorphic forms $\omega_1, \omega_2\in H^0(X, \Omega_X)$ such that $\omega_1\cup \omega_2=0\in H^0(X, \Omega_X^2)$. By the Castelnuovo-de Franchis Theorem \cite{Cat} this is equivalent to the existence of a surjective map $f\colon X\twoheadrightarrow \Sigma_g$ over an algebraic curve of genus $g\geq 2$. In this case, one says that $X$ is \emph{fibred}. In fact, one has the following set-theoretic description of the resonance, see  \cite[Theorem C]{DPS}:
$$\cR(X)=\bigcup_{f:X\twoheadrightarrow \Sigma_C} f^*H^1(\Sigma_g,\mathbb C).$$
The resonance $\cR(X)$ is clearly linear and each such fibration $f\colon X\rightarrow \Sigma_g$ induces a surjection $f_*\colon \pi_1(X)\twoheadrightarrow \pi_1(\Sigma_g)$ and a $2g$-dimensional linear component $f^*H^1(\Sigma_g, \mathbb C)\subseteq H^1(X,\mathbb C)$ of the resonance variety.

\vskip 5pt
 
We record the following consequence of Theorem \ref{thm:fundamental}, see \cite[Theorem 4.16]{AFPRW2}:

\begin{cor}\label{cor:irr}
 Let $X$ be a non-fibred compact K\"ahler manifold. If $\pi_1(X)/\pi_1(X)''$ is nilpotent, then its nilpotency class is at most $2q(X)-2$, where 
 $q(X)=h^1(X, \OO_X)$ is the irregularity of $X$. \end{cor}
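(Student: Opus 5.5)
The plan is to turn the statement into the vanishing of Theorem \ref{thm:fundamental}, via the resonance description of Kähler manifolds and Massey's identification of the Chen groups. Set $G=\pi_1(X)$ and $N:=G/G''$. By Hodge theory $n:=b_1(G)=\dim H^1(X,\mathbb C)=2q(X)$.

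First, the hypothesis that $X$ is non-fibred gives $\cR(X)=\{0\}$. Indeed, if $\cR(X)\neq\{0\}$, the Castelnuovo--de Franchis argument yields a fibration $f\colon X\twoheadrightarrow\Sigma_g$ with $g\geq 2$. This is the description $\cR(X)=\bigcup_f f^*H^1(\Sigma_g,\mathbb C)$ recalled above. Since $N$ is nilpotent by assumption, Theorem \ref{thm:fundamental} applies and gives $\theta_m(G)=0$ for all $m\geq n-1$. Equivalently, by Theorem \ref{thm:va-resonance} together with the $1$-formality of Kähler groups and the equality case of (\ref{eq:ineq_koszul}), $W_{m-2}(G)=0$ for $m-2\geq n-3$.

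Second, I translate this into a statement about the lower central series of $N$, using Massey's identification $\mathrm{gr}_{m}(N)\cong \mathrm{gr}_{m-2}B(G)=I^{m-2}B(G)/I^{m-1}B(G)$ for $m\geq 2$. Each $\mathrm{gr}_m(N)$ is a finitely generated abelian group, and its rank $\theta_m(G)$ vanishes for $m\geq n-1$. Hence $\mathrm{gr}_m(N)$ is finite for every $m\geq n-1$. Because $N$ is nilpotent, $\Gamma_c(N)=1$ for some $c$. Descending induction along $\Gamma_m(N)/\Gamma_{m+1}(N)$ then shows that $\Gamma_{n-1}(N)=\Gamma_{2q(X)-1}(N)$ is a finite group.

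Third, and this is the main obstacle, I must show $\Gamma_{n-1}(N)$ is actually trivial, so that the nilpotency class of $N$ is at most $n-2=2q(X)-2$. Equivalently, I must show $I^{n-3}B(G)=0$ integrally rather than only after tensoring with $\mathbb Q$ or $\mathbb C$. The first thing I would try is to redo the identification $W(G)\twoheadrightarrow \mathrm{gr}\,B(G)_{\mathbb C}$ over $\mathbb Z[1/p]$ and $\mathbb F_p$, one prime at a time. The presentation (\ref{eqn:presentation_W}) makes sense over $\mathbb Z$, and Theorem \ref{thm:va-resonance} holds in characteristic $p\geq n-2$. So for each prime $p\geq n-2$ I would show $W_{n-3}(V_{\mathbb F_p},K_{\mathbb F_p})=0$, after checking that the resonance stays trivial mod $p$. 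This comes from the fact that $\mathbf{G}\cap\PP(K^\perp)=\emptyset$ is an open condition, and the non-fibred condition gives it over $\mathbb Q$. The goal is to conclude that $\mathrm{gr}_m(N)$ has no $p$-torsion for $m\geq n-1$.

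The small primes $p<n-2$ are exactly where the argument is delicate. There I would fall back on the characteristic-free vanishing of Raicu--VandeBogert, which only yields $W_{2n-7}=0$ in a weaker range, and try to bridge the gap using the nilpotency of $N$. If this torsion cannot be controlled, the argument still proves the bound $2q(X)-2$ for the nilpotency class of the torsion-free quotient $N/\mathrm{Tors}$, equivalently for the Malcev completion of $N$.
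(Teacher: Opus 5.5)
Your first two steps are exactly the argument behind the paper's one-line deduction from Theorem \ref{thm:fundamental}, which cites \cite[Theorem 4.16]{AFPRW2}. With $n=b_1(G)=2q(X)$, non-fibredness gives $\cR(X)=\{0\}$, hence $\theta_m(G)=0$ for $m\geq 2q(X)-1$, and the paper reads the bound on the nilpotency class directly off this vanishing.

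The obstacle you isolate in your third step is genuine, and the paper does not address it. Chen ranks only detect $\mathrm{gr}_m(N)\otimes\mathbb{Q}$, so all one obtains is that $\Gamma_{2q(X)-1}(N)$ is finite. This cannot be improved, because the corollary is false for the integral nilpotency class. By Serre's theorem, the dihedral group $D_{16}$ of order $16$, with rotation $r$ of order $8$, is $\pi_1(Y)$ for some smooth projective $Y$. Put $X=Y\times A$ with $A$ an abelian surface. Then:
\begin{enumerate}
\item $q(X)=2$, and every holomorphic $1$-form on $X$ is pulled back from $A$. Two independent ones have non-zero wedge product, so $X$ is non-fibred.
\item $\pi_1(X)=D_{16}\times\mathbb{Z}^4$ has commutator subgroup $\langle r^2\rangle\cong\mathbb{Z}/4$, which is abelian. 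Hence $\pi_1(X)/\pi_1(X)''=D_{16}\times\mathbb{Z}^4$, which is nilpotent of class $3>2=2q(X)-2$.
\end{enumerate}
Dihedral groups of order $2^k$ give arbitrarily large class with $q(X)=2$ fixed. Separately, the statement needs $q(X)\geq 2$, i.e.\ $n\geq 3$, for Theorem \ref{thm:va-resonance} to apply. An elliptic curve violates the bound even rationally: there $N=\mathbb{Z}^2$ has class $1>0$.

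Consequently your prime-by-prime plan cannot succeed, not even for the primes $p\geq n-2$. Replace $D_{16}$ by the metabelian $p$-group $C_p\wr C_p$, which has class $p$. This gives non-trivial $p$-torsion in $\Gamma_{3}(N)=\Gamma_{n-1}(N)$ for every prime $p\geq 3$, although $p\geq n-2=2$. The reason is that $(V,K)$ is built from the cup product on $H^1(X,\mathbb{C})$, which only sees the free part of $H_1(G,\mathbb{Z})$. By contrast, $\mathrm{gr}(N)$ is generated by all of $H_1(G,\mathbb{Z})$, torsion included. So reducing $K$ modulo $p$ and invoking Theorem \ref{thm:va-resonance} in characteristic $p$ says nothing about $p$-torsion in $\mathrm{gr}_m(N)$.

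What your first two steps do prove, completely, is your fallback statement, and that is the only sense in which the corollary holds. The torsion elements of the finitely generated nilpotent group $N$ form a finite normal subgroup $\mathrm{Tors}(N)$. Since $\Gamma_{2q(X)-1}(N)$ is finite, $\Gamma_{2q(X)-1}(N)\subseteq\mathrm{Tors}(N)$. Therefore, for $q(X)\geq 2$, the group $N/\mathrm{Tors}(N)$, equivalently the Malcev completion of $N$, has nilpotency class at most $2q(X)-2$. You should drop the mod-$p$ part and state the result in this form.
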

 
In particular, Corollary \ref{cor:irr} provides an upper bound for the Chen ranks of K\"ahler groups depending on the irregularity of the corresponding K\"ahler variety. Further important applications of Theorem \ref{thm:fundamental} concern the \emph{Torelli group} $T_g$ which measures from a homotopic-theoretic point of view the difference between the moduli space of curves and that of principally polarized abelian varieties. Denoting by $\mbox{Mod}_g$ the mapping class group of genus $g$, then $T_g$ is defined by the following exact sequence
$$1\longrightarrow T_g\longrightarrow \mbox{Mod}_g\longrightarrow \mbox{Sp}_{2g}(\mathbb Z)\longrightarrow 1.$$
 Due to fundamental work of Johnson \cite{J1}, \cite{J2}, it is known that one has an isomorphism 
$$H^1(T_g,\mathbb Q)\cong \bigwedge^3 H/H,$$
where $H:=H_1(\Sigma_g, \mathbb Q)$ is the first homology of a smooth algebraic curve of genus $g$. As explained in \cite{AFPRW2}, all the hypothesis of Theorem \ref{thm:fundamental} are satisfied and one concludes that for $g\geq 4$ the metabelian quotient $T_g/T_g''$ is nilpotent and its nilpotency class is at most 
$$b_1(T_g)-2={2g\choose 3}-2g-2.$$  
 
\begin{rmk}
Campana \cite{Cam} produced examples of K\"ahler groups of nilpotency class $2$. We are not aware of examples of higher nilpotency class, and constructing them is probably hard. On the other hand, Delzant \cite{Del} showed that solvable K\"ahler groups are virtually nilpotent. Finally, we mention the connection to a well-known conjecture of Koll\'ar \cite[Conjecture 4.16]{Ko} stating that the fundamental group of a smooth projective variety of Kodaira dimension zero is virtually abelian. This is known to hold in dimension at most $4$, see \cite{GKP}.  More generally, Campana \cite{Cam2} conjectured that the fundamental group of any \emph{special} smooth projective variety is virtually abelian.   
\end{rmk}

\subsection{Suciu's conjecture for hyperplane arrangement groups}
The fundamental groups of hyperplane arrangement groups represent a central class of groups for which the Chern ranks are particularly well understood and display deep connections to combinatorics. There are several good surveys on this topics, for instance  \cite{Su2}, \cite{Yuz}  and we shall concentrate on those recent developments that fit the other topics treated in this paper. 

\vskip 3pt

Let $\cA$ be an arrangement of hyperplanes in $\C^{m}$ and let 
\[
M({\cA}):= \C^{m}\setminus \bigcup_{H\in \cA} H
\] 
be the complement of the arrangement. We set $G(\cA):=\pi_1(M(\cA))$ and refer to this as the \emph{group of the arrangement}.

\vskip 3pt

The \emph{intersection lattice}\/ $L(\cA)$ of the arrangement is the set of al non-empty intersections of $\cA$ reverse ordered by inclusion. We also refer to $L(\cA)$ as the \emph{matroid} of $\cA$. Brieskorn and Arnold gave a description of the cohomology of $M(\cA)$ in terms of the lattice $L(\cA)$, which we now recall.  Let $E(\cA)$ be the exterior algebra over $\C$ generated in degree $1$ by elements $\{e_H\}_{ H\in \cA}$. We choose an ordering on the set of hyperplanes in $\cA$ and define a differential $\partial \colon E(\cA)\to E(\cA)$,  by setting 

\[
\partial \bigl(e_{j_1} \wedge \ldots \wedge  e_{j_s}\bigr)=\sum_{k=1}^s (-1)^{k-1} e_{j_1}\wedge\cdots 
\wedge\widehat{e}_{j_k}\wedge\cdots \wedge e_{j_s}.
\]

We denote by $I(\cA)$ the ideal of $E(\cA)$ generated 
by the elements of the form $\partial\bigl(e_{j_1}\wedge \cdots \wedge e_{j_s}\bigr)$, where $\mbox{codim}\bigl(H_{j_1}\cap \ldots \cap H_{j_s}\bigr)<s$. The \emph{Orlik--Solomon algebra} of $\cA$ is then the quotient $A(\cA):=E(\cA)/I(\cA)$.
As shown in \cite{OS}, one then has an isomorphism of graded algebras 
$$H^{\bullet}\bigl(M(\cA), \C\bigr) \cong A(\cA).$$

Note that from Brieskorn's work it also follows that $G(\cA)$ is a formal group. A consequence of this is that both sets of invariants $\phi_q\bigl(G(\cA)\bigr)$ and $\theta_q\bigl(G(\cA)\bigr)$ are combinatorially determined. A presentation of the fundamental group of $G(\cA)$ has been given in \cite{Sal}. On the other hand, Rybnikov \cite{Ryb} constructed examples of arrangements with isomorphic combinatorial structures, yet having non-isomorphic fundamental groups.

\begin{rmk}
A prominent example is given by the arrangement $\cA_{n-1}$ in $\mathbb C^n$ with hyperplanes $H_{ij}:\{z_i=z_j\}$. The complement $M(\cA_{n-1})$ is the configuration space of $n$ points on $\mathbb C$ and the fundamental group $\pi_1\bigl(M(\cA_{n-1})\bigr)=:P_n$ is the pure braid group on $n$ strands.
\end{rmk}

The components of the resonance variety were described by Falk--Yuzvinsky \cite{FY} in terms of elegant combinatorial structures called \emph{multinets}.

\begin{definition}
A {\em $k$-multinet} on an arrangement $\cA$ consists
of a partition $\mathcal{M}$ of $\cA$ into $k$ subsets $\cA_1,\ldots,\cA_k$,
together with an assignment of multiplicities,
$m\colon \cA\rightarrow  \mathbb Z_{>0}$, and a subset $\mathcal{X} \subseteq L_2(\cA)$,
called the \emph{base locus} of $\mathcal{M}$, such that the following 
are satisfied:
\begin{enumerate}
	\item 
	There is an integer $d$ such that $\sum_{H\in\cA_i} m_H=d$,
	for all $i=1, \ldots, k$.
	\item 
	For any two hyperplanes $H$ and $H'$ in different classes,
	$H\cap H'\in \mathcal{X}$.
	\item
	For each $X\in\mathcal{X}$, the sum
	$\sum_{H\in\cA_i\cap X\supset H} m_H$ is independent of $i$.
	\item \label{m4}
	For each $i=1, \ldots, k$, the space
	$\big(\bigcup_{H\in \cA_i} H\big) \setminus \mathcal{X}$ is connected.
\end{enumerate}
\end{definition}

This definition packages in combinatorial terms the notion of a pencil of plane curves with totally degenerate fibres, built out of an arrangement of lines in $\PP^2$. Given a multinet $\cM$ as above, for $i=1, \ldots, k$, we introduce the vectors $$u_i:=\sum_{H\in \cA_i} m_H\cdot e_H\in H^1\bigl(M(\cA),\mathbb C)$$ 
and set $\cP_{\cM}:=\langle u_2-u_1, \ldots, u_k-u_1\rangle \subseteq H^1(M(\cA), \mathbb C)$. It is shown in \cite[Theorem 2.4]{FY} that $\cP_{\cM}$ is an isotropic $(k-1)$-dimensional component of the resonance $\cR(\cA)$. Furthermore, if $\cB$ is a subarrangement of $\cA$ and $\cN$ is a multinet of $\cB$, then $\cP_{\cN}\subseteq H^1(M(\cA),\mathbb C)$ is still a component of $\cR(\cA)$. In fact all the components of the resonance appear in this way and one has the following set-theoretic equality
\begin{equation}\label{eq:res-arr}
\cR(\cA)=\bigcup_{\cB\subseteq \cA}   \ \ \bigcup_{\cN \mathrm{ multinet }\mbox{ }  \mathrm{on}\ \mbox{}\cB} \cP_N\subseteq H^1(M(\cA), \mathbb C).
\end{equation}
 
Components of $\cR(\cA)$ coming from multinets on $\cA$ itself are called \emph{essential}, the other, of the form $\cP_{\cN}$, where $\cB \subsetneq \cA$ are called \emph{non-essential}. The simplest multinets are provided by a flat $X$ of $\cA$ lying at the intersection of at least three hyperplanes in $\cA$. In this case, we introduce the subarrangement $\cA_X$ of all hyperplanes in $\cA$ containing $X$ and consider the multinet with $k=|\cA_X|$ blocks, each consisting of a single hyperplane, where we set all the multiplicities equal to be equal to $1$. The components $\cP_X$ of $\cR(\cA)$ arising in this way are called \emph{local components}. 

\begin{rmk}
The $(k,1)$-multinets on an arrangement $\cA$ correspond precisely to the local components of $\cR(\cA)$. Falk and Yuzvinsky \cite{FY} showed that non-local $(k, \ell)$-multinets can exist only when $k\in \{3,4\}$. Whereas there are plenty of examples of $(3,\ell)$-multinets for every $\ell$, there is only one known example of a $(4, \ell)$-multinet, namely the Hesse $(4,3)$-multinet obtained from the $9$ flex points of an elliptic curve. It is conjectured that this is the only $(4,\ell)$-multinet, see for instance Yuzvinsky's survey \cite[Conjecture 4.3]{Yuz}
\end{rmk}

\vskip 3pt

In this shown in \cite{CS} that all essential components of $\cR(\cA)$ are strongly isotropic. The same conclusion holds for all local components, respectively for all arrangements having at most triple points, see \cite[Proposition 9.8, Theorem 9.9 ]{AFRS2}. We have the following result from [AFRS2].

\begin{thm}\label{thm:Suciu_arr}
Let $\cA$ be an arrangement such that one of the following conditions hold. 
\begin{enumerate}
\item All components of $\cR(\cA)$ are either local or essential. 
\item $\cA$ has no $2$-flats of size greater than $3$.
\end{enumerate}
If $h_m$ is the number of components of $\cR(\cA)$ of 
dimension $m$, then 
\begin{equation}\label{eq:Chen_rk2}
\theta_q\bigl(G(\cA)\bigr) = (q-1)\cdot \sum_{m\geq 2} h_m\cdot {m+q-2\choose q}, \ \ \mbox{ for all }\ 
        q\geq \bigl|\cA\bigr|-1.
\end{equation}        
\end{thm}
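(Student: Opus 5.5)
The plan is to reduce the statement to Theorem \ref{thm:main}, applied to the Koszul module of the arrangement group. Take $V:=H_1(G(\cA),\C)$, of dimension $n=|\cA|$, and let $K\subseteq \bigwedge^2 V$ be the orthogonal of the kernel of the cup product $\bigwedge^2 H^1(M(\cA),\C)\to H^2(M(\cA),\C)$. By the Orlik--Solomon description, $K^\perp$ is the degree-two part $I(\cA)_2$ of the Orlik--Solomon ideal, so $W(V,K)=W(G(\cA))$ and $\cR(V,K)=\cR(\cA)$.

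The group $G(\cA)$ is $1$-formal by Brieskorn's theorem. Hence (\ref{eq:ineq_koszul}) holds with equality:
$$\theta_{q+2}(G(\cA))=\dim W_q(V,K)\quad\text{for all } q.$$

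Next we verify the hypotheses of Theorem \ref{thm:main}. The set-theoretic description (\ref{eq:res-arr}) shows that $\cR(\cA)$ is a finite union of linear subspaces $\cP_{\cN}$, so the resonance is linear. Each $\cP_\cN$ is isotropic by \cite[Theorem 2.4]{FY}. The remaining point is separability, which is where the two alternative hypotheses enter.

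In case (1), every component is local or essential. Essential components are strongly isotropic by \cite{CS}, and local components are strongly isotropic by \cite[Proposition 9.8]{AFRS2}. In case (2), \cite[Theorem 9.9]{AFRS2} gives strong isotropy of all components for arrangements with at most triple points.

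This step is the main obstacle if one does not simply quote \cite{AFRS2}. One has to check the Petri-type criterion, namely surjectivity of $p_M\colon K\to M$ for each component, using the combinatorics of the flats. The difficult case is non-local, non-essential components sitting inside a larger arrangement, which is exactly why the hypotheses exclude them in general. For a local component $\cP_X$ I would argue directly: with $\overline{V}^\vee=\cP_X$, the elements $e_H\wedge e_{H'}$ with $H\in\cA_X$ and $H'\notin \cA_X$ have no Orlik--Solomon relation mixing them into $\overline{V}^\vee\wedge V^\vee$ except through flats containing $X$. From this one shows that $K^\perp\cap(\overline{V}^\vee\wedge V^\vee)\subseteq\bigwedge^2\overline{V}^\vee$.

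Granting strong isotropy, Theorem \ref{thm:main} gives
$$\dim W_q(V,K)=\sum_t \dim W_q(\overline{V}_t,0)\quad\text{for } q\ge n-3.$$
Each component of dimension $m$ contributes $(q+1)\binom{m+q}{q+2}$ by (\ref{eq:Koszul_zero}). Grouping the components by dimension and using $\theta_{q+2}=\dim W_q$, the substitution $q\mapsto q-2$ turns this into
$$(q-1)\sum_m h_m\binom{m+q-2}{q},\qquad q\ge n-1=|\cA|-1,$$
which is (\ref{eq:Chen_rk2}). Components of dimension $1$ cannot occur, since an isotropic component must have dimension at least $2$; this explains why the sum starts at $m\ge2$.
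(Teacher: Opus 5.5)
Your proposal is correct and follows the route the paper indicates when it quotes this result from \cite{AFRS2}: $1$-formality of $G(\cA)$ turns (\ref{eq:ineq_koszul}) into $\theta_{q+2}=\dim W_q(V,K)$, and \cite{CS} together with \cite[Proposition 9.8, Theorem 9.9]{AFRS2} give strong isotropy under either hypothesis. Theorem \ref{thm:main} with (\ref{eq:Koszul_zero}) and the shift $q\mapsto q-2$ then yields (\ref{eq:Chen_rk2}) for $q\geq|\cA|-1$.

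There are two small inaccuracies, neither of which affects the argument. First, a $1$-dimensional subspace is trivially isotropic, so isotropy is not why $1$-dimensional components are absent. The actual reason is that $a\wedge b\in K^\perp\setminus\{0\}$ puts the whole plane $\langle a,b\rangle$ inside $\cR(V,K)$; in any case the $m=1$ term vanishes because $\binom{q-1}{q}=0$. Second, your direct sketch for local components is wrong as stated. Other $2$-flats $Y\neq X$ through some $H\in\cA_X$ do produce relations involving $e_H\wedge e_{H'}$ with $H'\notin\cA_X$. A correct argument uses $K^\perp=\bigoplus_Y\bigwedge^2\cP_Y$, whose summands are supported on disjoint sets of coordinates, together with $\bigwedge^2\cP_Y\cap(e_H\wedge V^\vee)=0$ whenever $H\in\cA_X\cap\cA_Y$ and $Y\neq X$.
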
  

Suciu \cite{Su1} conjectured that the Chen rank formula (\ref{eq:Chen_rk2}) holds for \emph{every} hyperplane arrangement $\cA$ as long as $q\gg 0$. Theorem \ref{thm:Suciu_arr} is an optimal effective version of Suciu's Conjecture for arrangements having strongly isotropic resonance.

\vskip 5pt

As pointed out in \cite{AFRS2}, Theorem \ref{thm:Suciu_arr} can be applied to all graphic arrangements. Precisely, to a simple graph $\Gamma=(V, E)$ on a vertex set $V=\{1, \ldots, m\}$, we define the arrangement $\cA_{\Gamma}$ consisting of those hyperplanes $H_{ij}:\{z_i=z_j\}$ in $\mathbb C^m$, when $\{i,j\}\in E$. Denoting by $\kappa_r$ the set of (complete) $K_r$-subgraphs of $\Gamma$, then it is shown in \cite[Corollary 9.15]{AFRS2} that the Chen rank of group $G(\cA_{\Gamma})$ are given by the following formula, see also \cite{SS2}: 
$$\theta_q\bigl(G(\cA_{\Gamma})\bigr)=(q-1)(\kappa_3+\kappa_4), \mbox{ for all } \ q\geq \kappa_2-1.$$ 

\begin{question} 
Via the interpretation (\ref{eq:BGG}), we have that $\theta_q\bigl(G(\cA)\bigr)=\mbox{dim}_{\mathbb C} \mbox{Tor}^E_{q-1}\bigl(A(\cA),\mathbb C\bigr)_{-q}$. One can ask whether there is an explicit combinatorial formula, similar to Suciu's Conjecture (Theorem \ref{thm:Suciu_arr}) for the lower central series  ranks $\phi_q\bigl(G(\cA)\bigr)$.
It has been shown by Falk and Randell \cite{FR}, see also \cite[1.3]{SS1} that one has a formula 
\begin{equation}\label{eq:peeva}
\prod_{q=1}^{\infty} (1-t^q)^{\phi_q(G(\cA))}=\sum_{i=1}^{\infty}(-1)^i b_i\cdot t^i,
\end{equation}
where $b_i:=\mbox{dim } \mbox{Tor }_i^{A(\cA)}\bigl(\mathbb C, \mathbb C)_{-i}$, if and only if $A(\cA)$ is a Koszul algebra. This covers several important cases, see again \cite{SS1} and references therein, but Peeva \cite{Pe} showed that the formula (\ref{eq:peeva}) could not possibly hold for all arrangements $\cA$. This invites the question whether one can find a combinatorial formula for the ranks $\phi_q(\cA)$, at least for $q\gg 0$, in the same spirit as Suciu's Chen Ranks Conjecture?
\end{question}

\section{Koszul modules and Green's Conjecture}
We now discuss how, somewhat surprisingly, Koszul modules led to a new proof of Green's Conjecture on the syzygies of a  general canonical curve in characteristic zero and to a proof of this conjecture in positive characteristic. There are several good surveys on Green's Conjecture \cite{EL}, \cite{AF2} and \cite{F1} and we will try to take here a complementary view, focusing on the paper \cite{AFPRW1} and on the connections between syzygies and Koszul modules.

\vskip 4pt

To fix notation, we start with a projective variety $X$ defined over an algebraically closed field $\kk$ and with a globally generated line bundle $L$ on $X$. Set $r=r(L):=h^0(X,L)-1$ and denote by
$\varphi_L\colon X\rightarrow \PP^r=\PP H^0(X,L)^{\vee}$ the morphism induced by the linear system
$|L|$. We introduce the symmetric algebra $S:=\mbox{Sym } H^0(X,L)\cong \kk[x_0, \ldots, x_r]$. For every coherent sheaf $\F$ on $X$, we form the twisted coordinate $S$-module
$$\Gamma_X(\F,L):=\bigoplus_{q} H^0(X,\F\otimes L^{q}).$$

One writes down the twisted Koszul complex on $X$
$$
\begin{tikzcd}[column sep=23pt]
\bigwedge^{p-1} H^0(L)\otimes H^0(\F\otimes L^{q-1}) \ar[r, "d_{p+1,q-1}"] & \bigwedge^{p} H^0(L)\otimes H^0(\F \otimes L^q)  \ar[r, "d_{p,q}"] 
 &
 \bigwedge^{p-1} H^0(L)\otimes H^0(\F \otimes L^{q+1}),
\end{tikzcd}
$$
and define the \emph{Koszul cohomology group} $$K_{p,q}(X,\F,L):=\mbox{Ker}(d_{p,q})/\mbox{Im}(d_{p+1,q-1}).$$ 
Then, 
$$\mbox{dim } K_{p,q}(X,\F,L)=\mbox{dim } \mbox{Tor}_p^S\bigl(\Gamma_X(\F,L), \kk\bigr)_{p+q}=:b_{p,q}(X,\F,L)$$
are the dimensions of the spaces of $p$-syzygies of weight $q$ of the $S$-module $\Gamma_X(\F,L)$.

\vskip 4pt

When $\F=\OO_X$, then $\Gamma_X(L):=\Gamma_X(\OO_X,L)$ is the section ring of $X$ under the map $\varphi_L$. Then we set
$b_{p,q}(X,L):=b_{p,q}(X, \OO_X,L)$ and refer to these quantities as the \emph{Betti numbers}  of the pair $(X,L)$.

\vskip 4pt

As pointed out by Green \cite{Gr}, the Koszul cohomology groups are ordinary cohomology groups of certain syzygy vector bundles on $X$. Precisely, given $L$, we introduce the kernel bundle $$M_L:=\mbox{ker}\bigl\{\mathrm{ev}\colon H^0(X,L)\otimes \OO_X\longrightarrow L\bigr\}$$
 also considered in the sequence (\ref{eq:kernel_bundle}).
One has then the following canonical isomorphisms
\begin{equation}\label{eq:kosz_coh}
K_{p,q}(X, \mathcal{F}, L)  \cong \mathrm{Coker} \Bigl\{\bigwedge^{p+1}H^0(X,L) \otimes H^0(X,\mathcal{F} \otimes L^{q-1}) \longrightarrow H^0\bigl(X,\bigwedge^p M_L \otimes \mathcal{F} \otimes L^q
\bigr)\Bigr\},
\end{equation}
where the map in (\ref{eq:kosz_coh}) is the one induced from the short exact sequence
\begin{equation}\label{eq:wedge_seq}
0\longrightarrow \bigwedge^{p+1} M_L\longrightarrow \bigwedge^{p+1} H^0(X,L)\otimes \OO_X\longrightarrow \bigwedge^p M_L\otimes L\longrightarrow 0,
\end{equation}
after tensoring with the sheaf $\F\otimes L^{q-1}$ and taking cohomology. Computing Koszul cohomology of groups is generally difficult and for this reason in this paper we largely concentrate on the case when $X$ is a smooth curve.

\subsection{Green's Conjecture}
The research on the geometric theory of syzygies on curves in the last four decades has largely been inspired by three major conjectures appearing in the influential paper \cite{GL1} of Green and Lazarsfeld. They all concern the structure of the Betti diagram of a smooth curve 
$X\hookrightarrow \PP H^0(X,L)^{\vee}$ embedded by a very ample line bundle $L$. 

\vskip 4pt

The most famous of these conjectures is undoubtedly \emph{Green's Conjecture}, first posed in \cite{Gr}, which deals with smooth canonically embedded curves $X\subseteq \PP^{g-1}$, that is, with the case $L=\omega_X$. To be able to formulate it, we recall that the \emph{Clifford index} of the curve $X$ is defined as the following non-negative quantity 
$$
\mbox{Cliff}(X):=\mbox{min}\Bigl\{\mbox{deg}(L)-2h^0(X,L)+2: |L| \mbox{ is a linear system  with } h^0(X,L), h^1(X, L)\geq 2\Bigr\}.
$$

It is known that for a general $k$-gonal curve $X$ of genus $g$, one has $\mbox{Cliff}(X)=k-2$, whereas for a general curve  $\mbox{Cliff}(X)=\lfloor \frac{g-1}{2}\rfloor$. The Clifford index thus measures the \emph{complexity} of the curve in its moduli space, regarding hyperelliptic curves, that is, those for which $\mbox{Cliff}(X)=0$, as being the simplest curves of genus $g$.

\vskip 4pt

Green's Conjecture predicts the following equivalence for \emph{every} curve $X$:
\begin{equation}\label{eq:Green_Conj}
K_{p,2}(X,\omega_X)=0 \Longleftrightarrow p<\mbox{Cliff}(X). 
\end{equation}

\vskip 3pt

The statement $K_{p,2}(X, \omega_X)\neq 0$ if $p\geq \mbox{Cliff}(X)$ follows easily from the Green--Lazarsfeld \emph{Non-Vanishing Theorem} \cite{Gr}. The non-trivial implication is the vanishing $K_{p,2}(X,\omega_X)=0$, for all $p<\mathrm{Cliff}(X)$. The major challenge here lies in realizing the abstractly defined Clifford index  in the algebra of the canonical embedding of $X$, as the smallest index of a non-linear syzygy.

\vskip 3pt

Green's  Conjecture remains wide open for \emph{arbitrary} smooth curves. Voisin \cite{V1}, \cite{V2}  established Green's Conjecture for {\emph{general}} curves of every genus (in characteristic zero), using in an essential way the geometry of $K3$ surfaces. Her work has been essential for subsequent developments like Aprodu's Brill-Noether-theoretic sufficient condition for Green's Conjecture \cite{Ap}, the proof that all curves on $K3$ surfaces verify Green's Conjecture \cite{AF1}, or the work on the Prym--Green Conjecture \cite{FK1}, \cite{FK2}, \cite{FK3}. It is important to point out that for curves of maximal Clifford index $\mbox{Cliff}(X)=\lfloor \frac{g-1}{2}\rfloor$, the vanishing (\ref{eq:Green_Conj}) predicted by Green's Conjecture  is enough to determine entirely the Betti table of $X$, that is, the values of the Betti numbers $b_{p,q}(X, \omega_X)$ for all $p$ and $q$.  This is because via the description (\ref{eq:kosz_coh}) of the Koszul cohomology groups $K_{p,q}(X, \omega_X)$, the difference between the Betti diagrams on each diagonal are fixed, precisely 
\begin{equation}\label{eq:diff_Betti}
b_{p,1}(X, \omega_X)-b_{p-1,2}(X,\omega_X)=\frac{(g-2p-1)(g-p-1)}{p+1}{g-1\choose p-1},
\end{equation}
for $p\geq 1$ and $g\geq 3$. By Serre duality one also has via (\ref{eq:Green_Conj}) that $b_{p,2}(X,\omega_X)=b_{g-p-2,1}(X,\omega_X)$, for all $p$. This fact, together with the vanishing $b_{p,2}(X,\omega_X)=0$ for $p<\frac{g-1}{2}$ predicted by (\ref{eq:Green_Conj}), show that for each such curve we have $b_{p-1,2}(X,\omega_X)\cdot b_{p,1}(X,\omega_X)=0$, that is, the formulas (\ref{eq:diff_Betti}) determine all non-zero values of the Betti table of $(X, \omega_X)$.

\vskip 4pt

To make the discussion more concrete, we record the Betti table of a general canonical curve of odd genus $g=2i+3$:
\vskip 3pt

\begin{table}[htp!]
\begin{center}
\begin{tabular}{|c|c|c|c|c|c|c|c|c|}
\hline
$1$ & $2$ & $\ldots$ & $i-1$ & $i$ & $i+1$ & $i+2$  & $\ldots$ & $2i$\\
\hline
$b_{1,1}$ & $b_{2,1}$ & $\ldots$ & $b_{i-1,1}$ & $b_{i,1}$ & 0 & 0 &  $\ldots$ & 0 \\
\hline
$0$ &  $0$ & $\ldots$ & $0$ & $0$ & $b_{i+1,2}$ & $b_{i+2,2}$ & $\ldots$ & $b_{2i,2}$\\
\hline
\end{tabular}
\end{center}
\end{table}

The values of all non-zero Betti numbers  are computed by using the formulas (\ref{eq:diff_Betti}).

\subsection{Green's Conjecture via the tangent developable}  To prove Green's Generic Conjecture it suffices to exhibit a \emph{single} smooth curve $X$ of genus $g$ and Clifford index $\mbox{Cliff}(X)=\lfloor \frac{g-1}{2}\rfloor$ satisfying the condition 
\begin{equation}\label{eq:Green_suff}
K_{\lfloor \frac{g-3}{2}\rfloor,2}(X,\omega_X)=0.
\end{equation}

To  produce such a curve, even prior to Voisin's papers \cite{V1}, \cite{V2}, there has been a simple yet very appealing proposal of using the linear sections of the tangent developable variety to a rational normal curve in $\PP^g$. This proposal is documented in Eisenbud's paper \cite{Eis1} (who attributes it to Buchweitz--Schreyer) and we shall now describe it. With the novel perspective provided by Koszul modules, this strategy could finally be implemented in the paper \cite{AFPRW1}.  For the sake of simplicity, we shall concentrate on the case of characteristic zero, even though it is one of the main results of \cite{AFPRW1} that this argument can be carried out in positive characteristic as long as $p=\mbox{char}(\kk)\geq \frac{g+2}{2}$, therefore providing an answer to Green's Conjecture for generic curves in positive characteristic.

 \vskip 4pt
 
We start with a rational normal curve $\Gamma\subseteq \PP^g$ and consider its \emph{tangential variety} 
 $$\T\subseteq \PP^g.$$
 The surface $\T$ has a simple  affine parametrization 
 $$\mathbb A^2\longrightarrow \mathbb A^g, \ \ (x,t)\mapsto (x,x^2, \ldots, x^g)+t(1,2x,3x^2, \ldots, gx^{g-1}).$$
 Then $\T$ is a projectively normal surface of degree $2g-2$ with $\omega_{\T}\cong \OO_{\T}$ and in fact $\T$ can be regarded as a limit in $\PP^g$ of smooth $K3$ surfaces of genus $g$, though this fact plays no direct role in what follows. The construction of $\T$ follows by considering the jet bundle $\J=J\bigl(\OO_{\PP^1}(g)\bigr)$ on $\PP^1$. We denote by $U\cong \kk^2\cong H^0(\PP^1, \OO_{\PP^1}(1))$ and we identify $\mbox{Sym}^g U\cong H^0\bigl(\PP^1, \OO_{\PP^1}(g)\bigr)$. There is an exact sequence on $\PP^1$
 \begin{equation}\label{eq:jet_bundle}
 0\longrightarrow \mbox{Sym}^{g-2}U\otimes \OO_{\PP^1}(-2)\longrightarrow \mbox{Sym}^g U\otimes \OO_{\PP^1}\stackrel{\mathfrak{Ta}}\longrightarrow \J\longrightarrow 0, 
 \end{equation}
 where the Taylor morphism $\mathfrak{Ta}$ associates to a section its first order expansion around zero.  In particular, from (\ref{eq:jet_bundle}) we obtain an inclusion $\mbox{Sym}^g U\subseteq H^0(\PP^1, \J)\cong H^0\bigl(\PP(\J), \OO_{\PP(\J)}(1)\bigr)$. This inclusion induces a morphism: 
\begin{equation}\label{eq:norm}
\begin{tikzcd}[column sep=43pt]
\nu\colon \PP(\J)\ar[r, "|\mathrm{Sym}^g U|"]& \PP^g
\end{tikzcd}
\end{equation}
The tangential developable $\T$ is precisely the image of $\nu$ and this map is the normalization of $\T$. Furthermore, one also has an exact sequence of sheaves on $\T$
\begin{equation}\label{eq:normal}
0\longrightarrow \OO_{\T}\longrightarrow \nu_*\OO_{\PP(\J)}\longrightarrow \omega_{\Gamma}\longrightarrow 0.
\end{equation}

\vskip 4pt

The linear section of $\T$ are $g$-cuspidal rational curves $X:=\T\cap H\subseteq H\cong \PP^{g-1}$ and the $g$-cusps correspond to the points of intersection of the hyperplane $H$ with the rational normal curve $\Gamma$. It follows from the fundamental work of Eisenbud and Harris \cite{EH} on limit linear series that all $g$-cuspidal rational curves are general from the point of Brill--Noether theory, in particular any of their smoothing will be a curve of genus $g$ of maximal Clifford index $\lfloor \frac{g-1}{2}\rfloor$. This fact, which remarkably, holds \emph{irrespective} of the position of the $g$ cusps, was one of the reasons which made the tangential developable particularly attractive for trying to attack the Generic Green's Conjecture. Furthermore, since Koszul cohomology obeys a form of the Lefschetz Hyperplane Principle \cite[Theorem 3.7.b]{Gr}, one has isomorphism 
$$K_{p,q}(\T,\OO_{\T}(1))\cong K_{p,q}(X,\omega_X).$$
In order to establish (\ref{eq:Green_suff}), it suffices therefore to compute the Koszul cohomology groups $K_{p,2}(\T,\OO_{\T}(1))$. We now describe the strategy for carrying this out.  

\vskip 4pt

From the exact sequence (\ref{eq:normal}) we obtain after twisting and taking cohomology exact sequences
$$0\longrightarrow H^0(\T,\OO_{\T}(n)) \longrightarrow H^0\bigl(\T,\nu_*\OO_{\PP(\J)}(n)\bigr) \longrightarrow H^0(\Gamma, \omega_{\Gamma}(n)) \longrightarrow 0,$$
for each $n$. This sequence translates into an exact sequence of Koszul cohomology groups 

\begin{equation}\label{eq:Kosz_rel}
K_{i+1,1}\bigl(\T, \nu_* \OO_{\PP(\J)}, \OO_{\T}(1)\bigr) \longrightarrow K_{i+1,1}\bigl(\Gamma, \omega_{\Gamma}, \OO_{\Gamma}(1)\bigr)\longrightarrow K_{i,2}\bigl(\T, \OO_{\T}(1)\bigr)\longrightarrow 0,
\end{equation}
where the surjectivity of the last map in (\ref{eq:Kosz_rel}) follows because $K_{i,2}\bigl(\T, \nu_* \OO_{\PP(\J)}, \OO_{\T}(1)\bigr)=0$, see \cite[Propsition 5.6]{AFPRW1}.

\vskip 4pt

It is easy to show that using (\ref{eq:kosz_coh}), one has the following identification 
\begin{equation}\label{eq:id3}
K_{i+1,1}\bigl(\Gamma, \omega_{\Gamma}, \OO_{\Gamma}(1)\bigr)\cong \mathrm{Ker}\Bigl\{\mbox{Sym}^{i+2}U\otimes \bigwedge^{i+2} \mbox{Sym}^{g-1}U\longrightarrow \bigwedge^{i+2} \mbox{Sym}^g U\Bigr\}.
\end{equation}  
The map above is the one obtained after taking cohomology of a suitable twist of the exact sequence (\ref{eq:wedge_seq}) written on the rational curve $\Gamma$.

\vskip 4pt

Using the \emph{Kempf-Weyman geometric method} for constructing minimal free graded resolutions recalled in \cite[\S 4.3]{AFPRW1}, we then construct an explicit resolution of the $S$-graded ring $\bigoplus_{n\geq 0} H^0\bigl(\T, \nu_*\OO_{\PP(\J)}(n)\bigr)$, which in particular yields  that 
\begin{equation}\label{eq:id4}
K_{i+1,1}\bigl(\T,\nu_*\OO_{\PP(\J)}, \OO_{\T}(1)\bigr)=\mbox{Sym}^{2i+2}U\otimes \bigwedge^{i+2} \mbox{Sym}^{g-2}U.
\end{equation} 

\vskip 4pt

Understanding the maps in the presentation (\ref{eq:Kosz_rel}) via the identifications (\ref{eq:id3}) and (\ref{eq:id4}) looks challenging. The situation becomes transparent once we apply \emph{Hermite reciprocity} to both spaces appearing on the right hand sides of both (\ref{eq:id3}) and (\ref{eq:id4}).
Somewhat miraculously, after these identifications the sequence (\ref{eq:Kosz_rel}) becomes something recognizable as a particular Koszul module, which we introduce in what follows.

\begin{definition}\label{def:gordan}
We consider the \emph{Clebsch--Gordan decomposition} $$\bigwedge^2\mathrm{Sym}^{i+2}U\cong \mathrm{Sym}^{2i+2}U\oplus \mathrm{Sym}^{2i-2}U\oplus \mathrm{Sym}^{2i-6}U\oplus \cdots.$$ The \emph{Clebsch--Gordan} module is the Koszul module corresponding to this decomposition, that is,
$$W\bigl(\mathrm{Sym}^{i+2}U, \mathrm{Sym}^{2i+2}U\bigr).$$
\end{definition}
\vskip 4pt

Hermite reciprocity is a relatively elementary functorial isomorphism in characteristic zero
\begin{equation}\label{eq:hermite}
\mathfrak{He}\colon \mbox{Sym}^d\Bigl(\mbox{Sym}^iU\Bigr)\stackrel{\cong}\longrightarrow \bigwedge^i\Bigl(\mbox{Sym}^{d+i-1} U\Bigr),
\end{equation}
where we recall that $U\cong \kk^2$. As explained in \cite[Remark 3.4]{AFPRW1} Hermite reciprocity in characteristic zero can be regarded as a manifestation of the isomorphism $\mbox{Sym}^i (\PP^1)\cong \PP^i$. It is an important result of \cite[\S 3.4]{AFPRW1} that a characteristic free version of Hermite reciprocity is provided, which takes the form $\mbox{Sym}^d\bigl(D^i U)\cong \bigwedge^i \bigl(\mbox{Sym}^{d+i-1}U\bigr)$, where $D^iU$ is the space of  divided powers, which is isomorphic to $\mbox{Sym}^iU$ only in characteristic zero.

\vskip 5pt

Applying Hermite reciprocity to (\ref{eq:id3}), we have
$$K_{i+1,1}\bigl(\Gamma, \omega_{\Gamma}, \OO_{\Gamma}(1)\bigr)\cong \mbox{Ker} \Bigl\{\mathrm{Sym}^{i+2}U\otimes \mathrm{Sym}^{g-i-2}\bigl(\mathrm{Sym}^{i+2}U\bigr)\longrightarrow \mathrm{Sym}^{g-i-1}\bigr(\mathrm{Sym}^{i+2}U\bigr)\Bigr\}.$$
A priori it is not clear that the map appearing in this identification is the multiplication map $\delta_1$ appearing in the definition (\ref{eq:def-Wq}) of the Koszul module $W\bigl(\mathrm{Sym}^{i+2}U, \mathrm{Sym}^{2i+2}U\bigl)$, though this is eventually proven in \cite{AFPRW1}. Furthermore,  the identification  (\ref{eq:id4}) yields an isomorphism 
$$\mathfrak{he}\colon K_{i+1,1}\bigl(\T, \nu_*\OO_{\PP(\J)}, \OO_{\T}(1)\bigr)\stackrel{\cong}\longrightarrow \mathrm{Sym}^{2i+2}U\otimes \mathrm{Sym}^{g-i-3}\bigl(\mathrm{Sym}^{i+2}U\bigr).$$

\begin{thm}\label{thm:kosz_tangdev}
We have the following canonical identification of the Koszul cohomology of $\T$:
\begin{equation}\label{eq:canid}
K_{i,2}(\T, \OO_{\T}(1))\cong W_{g-3-i}\Bigl(\mathrm{Sym}^{i+2}U,  \mathrm{Sym}^{2i+2}U\Bigr).
\end{equation}
\end{thm}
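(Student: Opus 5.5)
The plan is to read the identification off the exact sequence (\ref{eq:Kosz_rel}) once both of its first two terms are rewritten by Hermite reciprocity. Set $V:=\mathrm{Sym}^{i+2}U$, $K:=\mathrm{Sym}^{2i+2}U\subseteq \bigwedge^2 V$ (the top Clebsch--Gordan summand) and $q:=g-3-i$, so that $q+1=g-i-2$ and $q+2=g-i-1$. By (\ref{eq:def-Wq}),
$$W_q(V,K)=\mathrm{Ker}\bigl\{V\otimes \mathrm{Sym}^{q+1}V\stackrel{\delta_1}\longrightarrow \mathrm{Sym}^{q+2}V\bigr\}\big/\,\delta_2\bigl(K\otimes \mathrm{Sym}^{q}V\bigr).$$
Since (\ref{eq:Kosz_rel}) gives $K_{i,2}(\T,\OO_\T(1))\cong \mathrm{Coker}\bigl\{K_{i+1,1}(\T,\nu_*\OO_{\PP(\J)},\OO_\T(1))\to K_{i+1,1}(\Gamma,\omega_\Gamma,\OO_\Gamma(1))\bigr\}$, it suffices to identify the target with $\mathrm{Ker}(\delta_1)$ and the image of the map with $\delta_2(K\otimes\mathrm{Sym}^qV)$.

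First, I apply $\mathfrak{He}$ with $d=g-i-2$ to $\bigwedge^{i+2}\mathrm{Sym}^{g-1}U$ and with $d=g-i-1$ to $\bigwedge^{i+2}\mathrm{Sym}^gU$ in (\ref{eq:id3}). The source becomes $V\otimes \mathrm{Sym}^{q+1}V$ and the target $\mathrm{Sym}^{q+2}V$. I then check that the transported map is the multiplication $\delta_1$. The map in (\ref{eq:id3}) comes from multiplying sections of $\OO_{\PP^1}(1)$ into $\mathrm{Sym}^{g-1}U$ and wedging, so it is $\mathrm{SL}(U)$-equivariant. Realizing $\mathfrak{He}$ through $\mathrm{Sym}^{i+2}(\PP^1)\cong\PP^{i+2}$, the wedge $\bigwedge^{i+2}\mathrm{Sym}^dU$ becomes the space of sections of $\OO(d-i-1)$ on $\PP(\mathrm{Sym}^{i+2}U)$, and raising $d$ by one corresponds to multiplying by a linear form. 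Equivariance alone does not pin the map down because the multiplicities are larger than one, so I would verify it on highest weight vectors or through this geometric model, checking naturality of $\mathfrak{He}$ in $d$. This gives $K_{i+1,1}(\Gamma,\omega_\Gamma,\OO_\Gamma(1))\cong \mathrm{Ker}(\delta_1)$.

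Second, via $\mathfrak{he}$, the first term is $K\otimes \mathrm{Sym}^{q}V$. I must show that the composite
$$K\otimes\mathrm{Sym}^qV\longrightarrow V\otimes\mathrm{Sym}^{q+1}V$$
equals $\delta_2$ restricted to $K\subseteq\bigwedge^2V$, up to an automorphism of the source. The map in (\ref{eq:Kosz_rel}) is induced by restriction $\nu_*\OO_{\PP(\J)}\to\omega_\Gamma$. The $\mathrm{Sym}^{2i+2}U$ factor in (\ref{eq:id4}) comes from the Kempf--Weyman resolution, built from the jet sequence (\ref{eq:jet_bundle}) with kernel $\mathrm{Sym}^{g-2}U\otimes\OO(-2)$. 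I would trace the generators through this resolution and compare with the Clebsch--Gordan projection $\bigwedge^2\mathrm{Sym}^{i+2}U\to\mathrm{Sym}^{2i+2}U$, given by the transvectant $f\wedge g\mapsto f\,\partial g-g\,\partial f$. This matches the Gaussian-type structure coming from the Taylor map.

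The main obstacle is this second step: producing the explicit compatibility between $\mathfrak{he}$, the Kempf--Weyman description and $\delta_2$. I would reduce it to a single equivariant statement. The map $K\otimes\mathrm{Sym}^qV\to\mathrm{Ker}(\delta_1)$ is $\mathrm{SL}(U)$-equivariant and $S$-linear in the $\mathrm{Sym}^\bullet V$ variable, since the construction is compatible with the module structures. It is therefore determined by its value on generators, namely its restriction to $K\otimes 1\to V\otimes V$. The image must land in $\bigwedge^2V$, because it lies in $\mathrm{Ker}(\delta_1)\cap (V\otimes V)$, and $\mathrm{Sym}^{2i+2}U$ occurs with multiplicity one in $\bigwedge^2 V$. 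By Schur's lemma the restriction is then $\delta_2|_K$ up to a scalar. The scalar is nonzero because $K_{i,2}$ has the expected dimension only if the map is nonzero, which I would check on one highest weight vector.
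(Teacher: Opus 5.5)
Your overall architecture is the paper's: read $K_{i,2}(\T,\OO_\T(1))$ off (\ref{eq:Kosz_rel}) as a cokernel, transport both terms by Hermite reciprocity, and identify the two maps with $\delta_1$ and with $\delta_2$ restricted to $K\otimes \mathrm{Sym}^qV$.

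Your first step is sound in substance. In the model of anti-symmetric sections on $(\PP^1)^{i+2}$, the map in (\ref{eq:id3}) sends $\ell^{i+2}\otimes(f_1\wedge\cdots\wedge f_{i+2})$ to $\ell f_1\wedge\cdots\wedge \ell f_{i+2}$, which is multiplication by a linear form on $\mathrm{Sym}^{i+2}\PP^1\cong\PP^{i+2}$. Checking only on highest weight vectors is not enough when multiplicities exceed one (you would need a full basis of each multiplicity space), so the geometric model is the argument to use there.

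The gap is in the second step, and it is exactly the compatibility that the paper singles out as the main content of \cite{AFPRW1}. Your reduction rests on the claim that the maps $K\otimes\mathrm{Sym}^qV\to\mathrm{Ker}(\delta_1)$ are $S$-linear ``since the construction is compatible with the module structures''. With $i$ fixed, however, the degree $q=g-3-i$ is the genus. Each $q$ comes from a different tangential surface $\T_g\subseteq\PP^g$, a different Kempf--Weyman resolution, and a different map $\nu_*\OO_{\PP(\J)}\to\omega_\Gamma$. The $S=\mathrm{Sym}(\mathrm{Sym}^{i+2}U)$-module structure exists only on the Hermite-reciprocity side. Nothing in the geometric construction says that the connecting map for genus $g+1$ intertwines multiplication by $V$ with the one for genus $g$. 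Nor does anything say that $\mathfrak{he}$ is $S$-linear, which you also need for ``determined by its value on generators''. This link between different genera is precisely the phenomenon the paper calls very mysterious, so it cannot be taken as an input.

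Without it, Schur's lemma only handles $q=0$, i.e.\ $g=i+3$. For $q\geq 1$ the space of $\mathrm{SL}(U)$-equivariant maps $\mathrm{Sym}^{2i+2}U\otimes\mathrm{Sym}^qV\to\mathrm{Ker}(\delta_1)\subseteq V\otimes\mathrm{Sym}^{q+1}V$ is far from one-dimensional. So equivariance plus nonvanishing does not force the image to equal $\delta_2(K\otimes\mathrm{Sym}^qV)$, and the cokernel is not determined.

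To close the gap you have to do what the Schur-lemma shortcut was meant to avoid. At each fixed $g$, compute explicitly the map on Koszul cohomology induced by $\nu_*\OO_{\PP(\J)}\to\omega_\Gamma$, in terms of the descriptions (\ref{eq:id4}) and (\ref{eq:id3}) coming from (\ref{eq:jet_bundle}) and the Kempf--Weyman construction. Then verify, under Hermite reciprocity, that it equals $\delta_2$ composed with the Clebsch--Gordan embedding $\mathrm{Sym}^{2i+2}U\hookrightarrow\bigwedge^2\mathrm{Sym}^{i+2}U$. This is the representation-theoretic computation the paper attributes to \cite{AFPRW1}. Once you have a formula uniform in $g$, $S$-linearity becomes a consequence rather than an assumption, and Schur's lemma is no longer needed.
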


In what follows, we explain identification (\ref{eq:canid}). We have the following diagram in which the first horizontal arrow is an isomorphism provided by Hermite reciprocity, whereas the second is an isomorphism onto the subspace $\mbox{Ker}(\delta_1)$:

\begin{equation}
\begin{aligned}
\xymatrix{
 K_{i+1}\bigl(\T,\nu_*\OO_{\PP(\J)}, \OO_{\T}(1)\bigr) \ar[d] \ar[rr]^{\mathrm{\mathfrak{He}}}  & \hspace{1.5cm} & \mathrm{Sym}^{2i+2}U\otimes \mbox{Sym}^{g-i-3}\bigl(\mathrm{Sym}^{i+2}U\bigr) \ar[d]_{\delta_2} \\
 K_{i+1}\bigl(\Gamma, \omega_{\Gamma}, \OO_{\Gamma}(1)\bigr) \ar[rr]^{\mathrm{\mathfrak{He}}}  \ar[d]& & \mathrm{Sym}^{i+2}U\otimes \mathrm{Sym}^{g-i-2}\bigl(\mathrm{Sym}^{i+2}U\bigr)\ar[d]_{\delta_1}\\
 K_{i,2}\bigl(\T, \OO_{\T}(1)\bigr) &  & \mathrm{Sym}^{g-i-1}\bigl(\mathrm{Sym}^{i+2}U\bigr)
}
\end{aligned}
\end{equation}

From this diagram, assuming the compatibility of the identifications provided via Hermite reciprocity between the Koszul cohomology groups in the left column, and via the projection of the Koszul complex on the right column respectively, one can define a morphism 
$$K_{i,2}\bigl(\T, \OO_{\T}(1)\bigr)\longrightarrow \frac{\mathrm{Ker}(\delta_1)}{\mathrm{Im}(\delta_2)}=:W_{g-i-3}\bigl(\mathrm{Sym}^{i+2}U, \mathrm{Sym}^{2i+2}U\bigr).$$
The existence of this map and the fact that this map is an isomorphism are the main results of \cite{AFPRW1}. The proof is essentially representation-theoretic.

\vskip 4pt

Note that Theorem \ref{thm:kosz_tangdev} displays some striking features. It turn out that for each $i\geq 1$, one Koszul module, namely $W\bigl(\mbox{Sym}^{i+2}U,  \mbox{Sym}^{2i+2}U\bigr)$ encodes in its various degree components the Koszul cohomology of general curves of different genera!
This phenomenon, already observed in \cite{Eis1} of mixing the genera of the syzygies of general curves in one algebraic object remains very mysterious.

\begin{question} 
Can one interpret the identification (\ref{eq:canid}) involving general curves of various genera as a geometric operation at the level of the stable moduli space of curves $\cM_{\infty}=\mbox{lim}_{g\rightarrow \infty} \cM_g$? When asking this question, we are guided by the numerous stabilization results in the cohomology of the moduli space of curves, in particular by the solution by Madsen and Weiss \cite{MW} of Mumford's Conjecture on the cohomology $H^*(\cM_{\infty}, \mathbb Q)$.
\end{question} 

Note that Theorem \ref{thm:kosz_tangdev} admits a \emph{characteristic free} interpretation given in \cite[Theorem 1.7]{AFPRW1} and which is ultimately responsible for the solution to the Generic Green's Conjecture in positive characteristic.

\begin{thm}\label{thm:generic_green}
The Generic Green's Conjecture holds for curves of genus $g\geq 3$ defined over an algebraically closed field $\kk$ of characteristic zero, or when $\mathrm{char}(\kk)\geq \frac{g+2}{2}$.
\end{thm}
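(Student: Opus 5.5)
The plan is to reduce to a single vanishing statement and then invoke Theorem \ref{thm:va-resonance} for the Clebsch--Gordan module.

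\emph{Reduction to odd genus.} The vanishing (\ref{eq:Green_suff}) for a single smooth curve of maximal Clifford index $\lfloor\frac{g-1}{2}\rfloor$ suffices, by semicontinuity of Koszul cohomology over $\cM_g$. For even genus $g=2i+2$ the statement follows from the odd genus case $2i+3$ by the standard argument: attach an elliptic tail, or take a general $1$-nodal curve of genus $2i+3$ and normalize. This only uses Hirschowitz--Ramanan/Aprodu style comparisons. So I fix $g=2i+3$, where the required vanishing is $K_{i,2}(X,\omega_X)=0$.

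\emph{Degeneration to the tangent developable.} By the Lefschetz hyperplane principle, $K_{i,2}(X,\omega_X)\cong K_{i,2}(\T,\OO_{\T}(1))$ for a $g$-cuspidal hyperplane section $X$ of $\T$. By Eisenbud--Harris, any smoothing of such a section has maximal Clifford index. Semicontinuity then transfers a vanishing for the cuspidal curve to the general smooth curve of genus $g$, since the canonical embeddings fit in a flat family (Gorenstein, with $\omega$ the restriction of $\OO(1)$). Theorem \ref{thm:kosz_tangdev} then gives
$$K_{i,2}(\T,\OO_{\T}(1))\cong W_{g-3-i}\bigl(\mathrm{Sym}^{i+2}U,\mathrm{Sym}^{2i+2}U\bigr)=W_{i}\bigl(\mathrm{Sym}^{i+2}U,\mathrm{Sym}^{2i+2}U\bigr).$$
Here $V=\mathrm{Sym}^{i+2}U$ has dimension $n=i+3$, so $n-3=i$. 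The required degree is exactly the threshold in Theorem \ref{thm:va-resonance}. Note also $\dim K=2i+3=2n-3$, which is the borderline case.

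\emph{Vanishing of resonance.} By Theorem \ref{thm:va-resonance} (valid in characteristic $0$ or $\mathrm{char}(\kk)\ge n-2=i+1$), it remains to show $\cR(V,K)=\{0\}$. Equivalently, by (\ref{eq:van_res}), no nonzero decomposable $a\wedge b\in\bigwedge^2 V^\vee$ is orthogonal to $K=\mathrm{Sym}^{2i+2}U$. Identify $\bigwedge^2\mathrm{Sym}^{i+2}U\to\mathrm{Sym}^{2i+2}U$ with the first transvectant $(f,h)\mapsto f_xh_y-f_yh_x$. Dually, a $2$-plane $\langle a,b\rangle\subseteq V^\vee\cong\mathrm{Sym}^{i+2}U^\vee$ pairs to zero with $K$ iff the Jacobian of two binary forms of degree $i+2$ vanishes identically. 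That forces $a,b$ to be proportional, since $a/b$ would have to be constant. This contradicts $a\wedge b\neq0$, so the resonance vanishes and $K_{i,2}=0$.

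In positive characteristic one must use the divided-power version of Hermite reciprocity. The Jacobian argument must also be checked: the Jacobian vanishing could now hold for $p$-th power ratios, but the degree $i+2<p$ when $p\ge\frac{g+2}{2}$ rules this out. The combined bound gives $\mathrm{char}(\kk)\ge\frac{g+2}{2}$.

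The main obstacle is Theorem \ref{thm:kosz_tangdev}, which I may assume. Among the remaining steps, the delicate one is the positive-characteristic bookkeeping: making sure the transvectant identification of $K$ and the resonance computation survive when $\mathrm{Sym}$ is replaced by divided powers.
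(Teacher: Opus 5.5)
Your odd-genus argument is exactly the paper's: the Lefschetz reduction to $\T$, Theorem \ref{thm:kosz_tangdev} at $g=2i+3$, and Theorem \ref{thm:va-resonance} with $n=i+3$. Your Jacobian argument for $\cR(V,K)=\{0\}$ also justifies a point the paper only asserts.

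The gap is the even-genus step. It is not a standard argument, and it does not work as you state it.
\begin{itemize}
\item An elliptic tail is useless here. The separating node is a base point of the dualizing sheaf, so there is no canonical embedding to compare with.
\item The nodal route needs $K_{i+1,1}(D,\omega_D)=0$ for a general irreducible one-nodal curve $D$ of genus $2i+3$. This does not follow from the vanishing on one cuspidal section of $\T$. Semicontinuity only gives vanishing on a dense open set, whose complement may contain the boundary divisor $\Delta_0$.
\item To close this you would need Aprodu's injection $K_{p,1}(C,\omega_C)\hookrightarrow K_{p,1}(D,\omega_D)$, together with the Hirschowitz--Ramanan identity $\mathfrak{Syz}=(i+1)\cdot\mathfrak{Hur}$ extended over $\Delta_0$. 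This is substantial input that you neither state nor prove.
\item More decisively, even granting all of this, the pairing of genus $2i+2$ with genus $2i+3$ loses the bound in the statement. The theorem claims genus $2i+2$ whenever $\mathrm{char}(\kk)\geq i+2$, but genus $2i+3$ is only available for $\mathrm{char}(\kk)\geq i+3$. For instance, genus $8$ in characteristic $5$ is never reached.
\end{itemize}

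The missing observation is that Theorem \ref{thm:kosz_tangdev} holds for every $g$, with the same Clebsch--Gordan module, so no degeneration is needed. For $g=2i+4$ the required vanishing (\ref{eq:Green_suff}) is again for $K_{i,2}$, and
\[
K_{i,2}\bigl(\T,\OO_{\T}(1)\bigr)\cong W_{i+1}\bigl(\mathrm{Sym}^{i+2}U,\mathrm{Sym}^{2i+2}U\bigr)=0 .
\]
The vanishing holds because a Koszul module is generated in degree $0$, so $W_{i+1}=\mathrm{Sym}^{i+2}U\cdot W_i=0$. This pairs genus $2i+4$ with genus $2i+3$. Both require $\mathrm{char}(\kk)\geq i+3=\lceil\frac{g+2}{2}\rceil$, which is how the paper gets the uniform bound.
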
 
\begin{proof}
In the interest of simplicity we explain this only in characteristic zero. We use the identification (\ref{eq:canid}) coupled with Theorem \ref{thm:va-resonance}. The Koszul module $W\bigl(\mbox{Sym}^{i+2}U, \mbox{Sym}^{2i+2}U\bigr)$ has vanishing resonance, therefore by applying Theorem \ref{thm:va-resonance}, since $\mbox{dim } \mathrm{Sym}^{i+2} U=i+3$, we obtain that $W_{i}\bigl(\mbox{Sym}^{i+2}U, \mbox{Sym}^{2i+2}U\bigr)=0$. But taking $g=2i+3$, this implies via (\ref{eq:canid}) that 
$$K_{i,2}(\T, \OO_{\T}(1))=0,$$
which is precisely the vanishing (\ref{eq:Green_suff}) required for the Generic Green's Conjecture.

\vskip 4pt

If $g=2i+4$ is even, then (\ref{eq:Green_suff}) requires the vanishing $K_{i,2}(\T, \OO_{\T}(1))=0$. But
$$K_{i,2}(\T, \OO_{\T}(1))\cong W_{i+1,1}\bigl(\mbox{Sym}^{i+2}U, \mbox{Sym}^{2i+2}U\bigr)=0,$$
where the last vanishing is a consequence of the fact that for a Koszul module if $W_{q}(V,K)=0$, then also $W_{q+1}(V,K)=0$.
\end{proof}

Schreyer observed computationally that Green's Conjecture fails for every curve in low characteristic for infinitely many genera. The first counterexamples are $g=7$ and $\mbox{char}(\kk)=2$, respectively $g=9$ and $\mbox{char}(\kk)=3$. Eisenbud and Schreyer \cite{ES} conjectured that the Generic Green's Conjecture should hold whenever $\mbox{char}(\kk)\geq \frac{g-1}{2}$ and Theorem \ref{thm:generic_green} not only provides an alternative proof of the Generic Green's Conjecture in characteristic zero, but also (essentially) answers the Eisenbud--Schreyer Conjecture.

\begin{question}
Can one determine all characteristics $p=\mbox{char}(\kk)<\frac{g-1}{2}$, for which Green's Conjecture fails for every smooth curve $X$ of genus $g$? Is there a connection between this question and the results in \cite[Theorem 7.1]{RV}, where one establishes the surprising non-vanishing result $W_{2n-8}(V,K)\cong \kk$, for any $\kk$-vector space $V$ of dimension $n=3+p^a$, with $p=\mbox{char}(\kk)$, and for every $(2n-3)$-dimensional subspace $K\subseteq \bigwedge^2V$ such that $\cR(V,K)=0$?  Note that this connection, if it exists, cannot be provided by the tangential surface $\T$ used in \cite{AFPRW1}.
\end{question}

\begin{rmk}
As already mentioned, for an arbitrary Koszul module $W(V,K)$ the vanishing $W_q(V,K)=0$ trivially implies that $W_{q+1}(V,K)=0$. In this sense, the identification (\ref{eq:canid}) shows that the Generic Green's Conjecture in even genus is an immediate consequence of the Generic Green's Conjecture in odd genus, when indeed, the non-vanishing locus $$\Bigl\{[C]\in \cM_{2i+3}: K_{i,2}(C, \omega_C)\neq 0\Bigr\}$$ is a divisor in $\cM_{2i+3}$. This is in stark contrast to Voisin's original approach using curves on (smooth) K3 surfaces, when the even and the odd genus case required two largely independent papers \cite{V1} and \cite{V2}.
\end{rmk}  

\begin{question}
The \emph{Generic Prym--Green Conjecture} \cite{CEFS} predicts the shape of the resolution of a general line bundle $L\in \mbox{Pic}^{2g-2}(X)$ on a general curve $X$ of genus $g$. The conjecture remains unknown for even genus, in which case it reduces to one single vanishing statement
$$K_{\frac{g}{2}-2,1}(X,L)=0, \ \mbox{for a general line bundle } L\in \mathrm{Pic}^{2g-2}(X).$$ 
The original Prym--Green Conjecture has been formulated in \cite{CEFS} for line bundles $L=\omega_X\otimes \eta$, where $\eta\in \mbox{Pic}^0(X)[2]$ is an $\ell$-torsion point in the Jacobian of $X$. For the surprising failure of this conjecture when $g=8$ and $\ell=2$, we refer to \cite{CFVV}. 
The conjecture has been completely solved for odd genus, see \cite{FK2}, \cite{FK3}. 
 
\vskip 4pt
 
By analogy with \cite{AFPRW1} we can ask whether there is a degeneration o an embedded curve 
$$X\stackrel{|L|}\hookrightarrow \PP^{g-2}$$ of degree $2g-2$, such that for this degenerate curve the Prym--Green Conjecture turns into a statement on a particular Koszul module. What should this Koszul module be?
 \end{question}

\subsection{Latest developments} After the new approach via Koszul modules to the Generic Green Conjecture was put forward around 2019, several other proofs appeared soon after. Following the strategy of using the tangential developable, Park \cite{Pa} geometrized the part of Theorem \ref{thm:generic_green} that uses Hermite reciprocity and Koszul modules and thus produced a proof of the Generic Green's Conjecture (in characteristic zero), entirely in the realm of the Koszul cohomology of varieties. Kemeny \cite{K1} in a remarkable paper found a massive simplification of Voisin's strategy \cite{V1}, \cite{V2} relying on computing the Koszul cohomology of a polarized $K3$ surface $\bigl(S, \OO_S(H)\bigr)$ with $\mbox{Pic}(S)=\mathbb Z\cdot H$, where $H^2=2g-2$, and thus offering a direct proof of the Generic Green's Conjecture in characteristic zero.  

\vskip 3pt

Raicu and Sam \cite{RS} in a very interesting paper developed a bigraded version of the theory of Koszul modules and found a bigraded version of the vanishing Theorem \ref{thm:va-resonance}. Precisely, one considers two $\kk$-vector spaces $V_1$ and $V_2$, sets $V=V_1\oplus V_2$ and considers a subspace $K\subseteq V$. Then one defines a \emph{bigraded} Koszul module $W(V,K)$ over $S=\mbox{Sym}(V_1\oplus V_2)$, where the bidegree $(d,e)$-part of $S$ is equal to $\mbox{Sym}^{d}(V_1)\oplus \mbox{Sym}^e(V_2)$. In a way broadly reminiscent of Theorem \ref{thm:va-resonance}, Raicu and Sam show that 
\begin{equation}\label{eq:rs_equiv}
W_{\mathrm{dim}(V_2)-2, \mathrm{dim}(V_1)-2}(V,K)=0 \Longleftrightarrow K^{\perp}\subseteq V_1^{\vee}\otimes V_2^{\vee} \mbox{ contains no rank two tensors}.
\end{equation} 
The condition on the left hand side of (\ref{eq:rs_equiv}) can be rephrased as asking that the linear subspace $\PP(K^{\perp})\subseteq \PP\bigl(V_1^{\vee}\otimes V_2^{\vee}\bigr)$ be disjoint from the Segre variety $\PP(V_1^{\vee})\times \PP(V_2^{\vee})$. Compared to the single graded setting of Koszul modules in Theorem \ref{thm:va-resonance}, the role of the Grassmannian $\mathbf{G}=\mbox{Gr}_2(V^{\vee})$ is played by the Segre variety.  

\vskip 3pt
A striking application of the equivalence (\ref{eq:rs_equiv}) is a proof of the Generic Green Conjecture using \emph{ribbons}, which works when $\mbox{char}(k)=0$, or $\mbox{char}(\kk)\geq \frac{g-1}{2}$. Note that this is the same bound as the one in Theorem \ref{thm:generic_green}, except when $g=2p+1$, with $p$ being a prime and $\mbox{char}(\kk)=p$, in which case the bound in \cite{RS} is better by one than the one in \cite{AFPRW1}. Putting these results together, one has a full solution to the Eisenbud--Schreyer Conjecture \cite{ES}.

\begin{question} What is the topological counterpart of the equivalence (\ref{eq:rs_equiv}) in \cite{RS}? Is there a bigraded version of the theory of Chen invariants of topological objects?
\end{question}

\section{The Secant Conjecture, the Gonality Conjecture and related topics}

The Green--Lazarsfeld \emph{Secant Conjecture} \cite{GL1} is a generalization of Green's Conjecture to the case of non-special line bundles on a curve $X$. It predicts that as long as the degree of a line bundle $L\in \mbox{Pic}^d(X)$ is not too small, the syzygies of $L$ are accounted for by a uniform secant construction. Precisely, assuming that $L$ is non-special such that $$\mbox{deg}(L)\geq 2g+p+1-\mbox{Cliff}(X),$$ then $K_{p,2}(X,L)\neq 0$ if and only if $L$ is $(p+1)$-very ample, that is, every effective divisor of degree $p+2$ on $X$ imposes independent conditions on $|L|$.  If $d\geq 2g+p+1$, then Green \cite{Gr} showed that $K_{p,2}(X,L)=0$, so the conjecture automatically holds in this case. We may thus assume $d\leq 2g+p$.

Denoting for positive integers $a,b$ by $X_a-X_b\subseteq \mbox{Pic}^{a-b}(X)$ the \emph{difference variety} consisting of line bundle $\OO_X(D_a-E_b)$, where $D_a$ and $E_b$ are effective divisors of degree $a$ and respectively $b$ on $X$, the Secant Conjecture can be reformulated as the following equivalence for line bundles:  
\begin{equation}\label{eq:GL-secant}
K_{p,2}(X,L)\neq 0\Longleftrightarrow L-\omega_X\in  X_{p+2}-X_{2g-d+p}.
\end{equation}

\vskip 4pt 

Writing $\mbox{deg}(L)=2g+p+1-c$, the conjecture is known to hold for an arbitrary curve when $c=1$, see \cite{GL3}. Agostini \cite{Ag} recently proved the Secant Conjecture  when $c=2$ and $X$ is not bielliptic. Note that unlike the Green Conjecture, the Secant Conjecture depends both on a curve and on a line bundle on it, therefore there are various degree of genericity in which one can formulate it. The Generic Secant Conjecture is proved in \cite{FK1}, and we summarize the results:
\begin{thm}\label{thm:secant}
\hfill
\begin{enumerate}
\item The Secant Conjecture holds for a general curve $X$ of genus $g$ and a general line bundle $L\in \mathrm{Pic}^d(X)$.
\item The Secant Conjecture holds for \emph{every} smooth curve of odd genus $g$ and for \emph{every} line bundle $L\in \mathrm{Pic}^{2g}(X)$.
\end{enumerate}
\end{thm}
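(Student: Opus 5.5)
The plan is to reduce both statements to the Generic Green's Conjecture (Theorem \ref{thm:generic_green}), or rather to its strongest form for curves on $K3$ surfaces. The link is a standard secant/degeneration trick: a non-special line bundle of degree $d=2g+p+1-c$ on a curve of genus $g$ is realised as the restriction of the canonical bundle of a curve of larger genus. We write $L\cong \omega_X(x_1+\cdots+x_m)$, or better pass to a nodal curve $Y=X\cup_{\{x_i,y_i\}}$ obtained by identifying $m$ pairs of points. Then $\omega_Y$ restricts to $\omega_X(\sum x_i+y_i)$, and a suitable Lefschetz-type comparison of Koszul cohomology holds, as in \cite[Theorem 3.7.b]{Gr}. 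Concretely, $K_{p,2}(X,L)$ injects into (or is a quotient of) $K_{p,2}(Y,\omega_Y)$ after adjusting indices. Green's Conjecture for a general point of the appropriate boundary divisor of $\overline{\cM}_{g+m}$ then yields the vanishing of $K_{p,2}(X,L)$ for the general pair. This gives part (1) in the range where the secant condition (\ref{eq:GL-secant}) says vanishing should occur. The non-vanishing direction follows from the Green--Lazarsfeld Non-Vanishing Theorem, since a divisor $D\in X_{p+2}$ failing to impose independent conditions produces a syzygy.

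The key steps for part (1) are as follows. First, dimension count: the general $L\in\mathrm{Pic}^d(X)$ lies outside $\omega_X+X_{p+2}-X_{2g-d+p}$ exactly when $2(p+2)-? < g$. This fixes the extremal numerical case, namely $d=2g+p+1-c$ with $c=\lfloor\frac{g-1}{2}\rfloor$ maximal. By the monotonicity $K_{p,2}(X,L)=0\Rightarrow K_{p,2}(X,L(x))=0$ (adding a point preserves vanishing via the hyperplane principle), it suffices to treat that extremal case. Second, to verify the single vanishing $K_{p,2}(X,L)=0$ in the extremal case, degenerate. Take a $K3$ surface $S$ with $\mathrm{Pic}(S)$ generated by $H$ and a smooth rational curve $R$ (or an elliptic curve). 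Choose $X\in|H|$ so that $L=\OO_X(H+R)$ or $\OO_X(H)$ restricted appropriately. Then compute $K_{p,2}(S,\OO_S(H+R))$ via Voisin's/Kemeny's method, or via \cite{AF1} which gives Green's Conjecture for all curves on $K3$ surfaces. Lazarsfeld--Mukai bundle arguments guarantee Brill--Noether generality of the relevant curves in $|H+R|$. Third, pass from $S$ to $X$ by the Lefschetz principle $K_{p,q}(S,\OO_S(C))\cong K_{p,q}(C,\OO_C(C))$, which holds since $H^1(S,\OO_S(nC))=0$.

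For part (2), $d=2g$ and $g=2i+1$ odd. The secant condition becomes $K_{p,2}(X,L)\ne0$ iff $L-\omega_X\in X_{p+2}-X_{p}$. The first nontrivial case is $p=\frac{g-3}{2}$, with $L-\omega_X$ of degree $2$. Here the condition defines a divisor in the universal $\mathrm{Pic}^{2g}$ over $\cM_g$, and we would compare two divisor classes in the spirit of the equality $\mathfrak{Syz}=(n-2)\mathfrak{Hur}$. The degeneracy locus of $K_{p,2}$ is the degeneracy locus of a map of vector bundles of equal rank, since the Betti numbers on the diagonal are determined as in (\ref{eq:diff_Betti}), and so it is a virtual divisor. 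Its class is then computed and compared with the class of the difference-variety locus $\omega_X+X_{p+2}-X_p$. Part (1) shows that the virtual divisor is a genuine divisor, and the Non-Vanishing Theorem shows that it contains the secant divisor with a certain multiplicity. Equality of classes then forces set-theoretic equality for every curve. The main obstacle is this class computation: Chern classes of the Koszul vector bundles over the universal Jacobian versus the class of the pulled-back difference variety. In particular one must check that the virtual degeneracy locus has no extra components supported over special curves, where Cliff drops. I would first try to settle this by restricting to test curves, such as fibres over a fixed curve (the Jacobian), where only Theta-divisor classes appear.
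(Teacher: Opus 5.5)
Your argument for part (1) fails at its core. Every special pair you construct has the form $L\cong\omega_X(Z)$ with $Z$ effective. This is $Z=\sum_i(x_i+y_i)$ when $L$ is the pull-back of $\omega_Y$ from the nodal curve $Y$, and $Z=R\cap X$ when $L=\OO_X(H+R)$ on the $K3$ surface (the elliptic variant behaves the same way). The only range where something must be proved is $d\le 2g+p$, since for $d\ge 2g+p+1$ Green's theorem already gives $K_{p,2}(X,L)=0$. In that range $\deg Z=d-2g+2\le p+2$, and writing $Z=(Z+F)-F$ with $F\in X_{2g-d+p}$ shows $L-\omega_X\in X_{p+2}-X_{2g-d+p}$. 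So such an $L$ is never $(p+1)$-very ample, and $K_{p,2}(X,L)\neq 0$ by the very non-vanishing result you quote for the converse. Green's Conjecture for $Y$, or for curves on $S$, therefore cannot yield a vanishing at these pairs, and you have no special point from which to run semicontinuity. Degenerations of this kind help for the first row of the Betti table, where the Gonality Conjecture only involves $X$, but not for the second row.

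Independently, the Lefschetz principle identifies $K_{p,q}(S,\OO_S(D))$ with the canonical syzygies of curves \emph{in} $|D|$. It says nothing about $K_{p,q}(X,\OO_X(D))$ for $X$ in another linear system, so your third step never reaches $(X,L)$. What you need is a special pair with $L-\omega_X$ \emph{outside} the difference variety whose syzygies are still computable. This is roughly the role of the lattice-theoretic $K3$ surfaces in \cite{FK1}: $X$ and $L$ come from two different classes on $S$, so that $K_{p,2}(X,L)$ can be compared with Koszul cohomology of $S$ in the second class, where Green's Conjecture applies.

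Your reduction is also defective. The dimension count is not done; it should read $2p+2+2g-d<g$, i.e.\ $d\ge g+2p+3$. More importantly, the extremal case is misidentified. For $p\ge\lceil\frac{g-3}{2}\rceil$ the binding constraint is $d\ge g+2p+3$, where $\omega_X+X_{p+2}-X_{g-p-3}$ is a divisor. For $p>\lceil\frac{g-3}{2}\rceil$ one even has $2g+p+1-\lfloor\frac{g-1}{2}\rfloor\le g+2p+2$, so at your ``extremal'' degree the difference variety fills the Jacobian and $K_{p,2}\neq0$ for every $L$. Monotonicity in $d$ is true, but it comes from $0\to M_L\to M_{L(x)}\to\OO_X(-x)\to0$ together with $K_{p,2}=0\Rightarrow K_{p-1,2}=0$, not from the hyperplane principle. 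Either way it never reaches the divisorial cases $d=g+2p+3$, which together with $d=2g$, $p=\frac{g-3}{2}$ are the heart of the matter.

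For part (2), comparing a virtual Koszul divisor with the secant divisor is sensible, and the equal-rank structure is fine: $b_{\frac{g-1}{2},1}(X,L)=b_{\frac{g-3}{2},2}(X,L)$ for $L\in\mathrm{Pic}^{2g}(X)$. But what you wrote leaves the decisive steps open. You do not carry out the class computation. You also do not establish the multiplicity with which the Koszul locus contains $\omega_X+X_{\frac{g+1}{2}}-X_{\frac{g-3}{2}}$; non-vanishing only gives set-theoretic containment.

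More seriously, the extra components you worry about really exist. If $X$ carries a pencil $A$ of degree $k\le\frac{g+1}{2}$, non-vanishing applied to $A\otimes(L\otimes A^{\vee})$ gives $K_{g-k,1}(X,L)\neq0$. Hence $K_{\frac{g-1}{2},1}(X,L)\neq0$, and so $K_{\frac{g-3}{2},2}(X,L)\neq0$ for \emph{every} $L\in\mathrm{Pic}^{2g}(X)$. The universal Koszul locus therefore contains the whole preimage of the Hurwitz divisor. Test curves inside a fixed Jacobian see only the $\theta$-coefficient and are blind to such vertical components. So they cannot rule out a curve of maximal Clifford index whose entire $\mathrm{Pic}^{2g}$ is Koszul-degenerate.

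On a fixed $X$ the fibrewise comparison can conclude, given the class computation, but only once you know $\{L: K_{\frac{g-3}{2},2}(X,L)\neq0\}\neq\mathrm{Pic}^{2g}(X)$ for that particular $X$. Part (1) gives this only for a general curve. Proving it for an arbitrary curve of maximal Clifford index is exactly the content of the ``every curve'' statement, and your proposal leaves it unaddressed.
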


Another situation recently treated in \cite{F2} concerns the case when the condition appearing in the left hand side of (\ref{eq:GL-secant}) describes a divisor in the Jacobian variety $\mbox{Pic}^{d-2g+2}(X)$, which happens precisely when $d=g+2p+3$.  
 
\begin{thm}\label{thm:GL_div}
Let $X$ be a smooth curve of genus $g$, and fix $\lceil \frac{g-3}{2} \rceil\leq p \leq g-3$. Assume
$$\dim\ W^1_{p+2}(X)=2p-g+2.$$ One has the following equivalence for a line bundle $L\in \mathrm{Pic}^{g+2p+3}(X)$:
$$K_{p,2}(X,L)=0 \ \Longleftrightarrow \ L-\omega_X\notin X_{p+2}-X_{g-p-3}.$$
\end{thm}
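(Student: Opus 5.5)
Both sides of the equivalence are divisorial conditions on $\mathrm{Pic}^{g+2p+3}(X)$, and I would prove it by comparing these two divisors. Set $e:=d-2g+2=2p+5-g$, so that $\xi:=L-\omega_X\in\mathrm{Pic}^{e}(X)$. Note $\deg(L)=g+2p+3\geq 2g$ precisely when $p\geq \frac{g-3}{2}$, so $L$ is non-special and very ample. The difference variety $X_{p+2}-X_{g-p-3}$ has expected dimension $(p+2)+(g-p-3)=g-1$, so it is a divisor. From (\ref{eq:kosz_coh}) I get the Euler characteristic identity $b_{p,1}(X,L)=b_{p-1,2}(X,L)+\chi$-type relation, which in this degree reads $\dim K_{p,1}(X,L)=\dim K_{p-1,2}(X,L)$ on the adjacent diagonal. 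Equivalently, $K_{p,2}(X,L)\cong H^1(X,\bigwedge^{p+1}M_L\otimes L)$ modulo the trivial part. The first step is to check that $\chi\bigl(X,\bigwedge^{p+1}M_L\otimes L\bigr)=0$. Given that, the non-vanishing locus $\mathfrak{Syz}_p:=\{L: K_{p,2}(X,L)\neq 0\}$ is the degeneracy locus of a map of vector bundles of equal rank over $\mathrm{Pic}^{d}(X)$, hence it is either all of $\mathrm{Pic}^d(X)$ or a divisor, namely a virtual theta-type divisor.

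The second step is the inclusion $X_{p+2}-X_{g-p-3}+\omega_X\subseteq \mathfrak{Syz}_p$. This is the easy direction of the Secant Conjecture. If $L=\omega_X+D-E$ with $D\in X_{p+2}$ and $E\in X_{g-p-3}$, then Riemann--Roch shows that $D$ fails to impose independent conditions on $|L|$. This produces a $(p+2)$-secant $p$-plane, and the Green--Lazarsfeld syzygy non-vanishing construction then gives $K_{p,2}(X,L)\neq0$.

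The third step shows that $\mathfrak{Syz}_p\neq \mathrm{Pic}^d(X)$ and computes cohomology classes. The class of the degeneracy locus is computed via the Fourier--Mukai/Chern class formula for the bundle $\mathcal{E}$ on $\mathrm{Pic}^d$ with fibres $H^0(\bigwedge^{p+1}M_L\otimes L)$, in the manner of theta divisors of vector bundles. It should equal a multiple $m\cdot\theta$. The class of $X_{p+2}-X_{g-p-3}$ is $\binom{g-2}{p+1}\theta$ (Pontryagin product of $X_a$ and $-X_b$). The claimed equivalence then follows by matching the multiplicities: $\mathfrak{Syz}_p-(X_{p+2}-X_{g-p-3})$ is effective, and both divisors have the same class, so the difference is zero. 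Properness comes from Theorem \ref{thm:secant}(1) or a specialization, or more intrinsically from the vanishing $K_{p,2}(X,L)=0$ for a general $L$.

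The main obstacle is ensuring that the difference variety appears with multiplicity exactly one, and that $\mathfrak{Syz}_p$ has no other components. This is where the hypothesis $\dim W^1_{p+2}(X)=2p-g+2$ is used. It guarantees that $X_{p+2}-X_{g-p-3}$ is reduced of the expected dimension, with no excess components from pencils, and that at a general point the secant plane is unique. That uniqueness gives $\dim K_{p,2}=1$ there, so the multiplicity is one. I would verify this transversality by a local computation of the degeneracy map at a general point of the difference variety. I would also check that the Chern-class computation gives exactly $\binom{g-2}{p+1}$, which is a combinatorial identity for Segre classes of $M_L$-type bundles on the Jacobian.
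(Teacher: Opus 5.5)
Your Step~1 is false, and the virtual-divisor-plus-class-comparison strategy of Steps~1 and~3 fails with it. Since $H^0(X,\bigwedge^{p+2}M_L)=K_{p+2,0}(X,L)=0$, the sequence (\ref{eq:wedge_seq}) (with $p$ replaced by $p+1$) embeds $\bigwedge^{p+2}H^0(X,L)$ into $H^0(X,\bigwedge^{p+1}M_L\otimes L)$. So if $\chi(\bigwedge^{p+1}M_L\otimes L)$ were zero, then $K_{p,2}(X,L)\cong H^1(X,\bigwedge^{p+1}M_L\otimes L)$ would be non-zero for \emph{every} $L$, contradicting the statement you want to prove.

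The correct count uses $r=2p+3$, $d=g+r$ and $\chi(\bigwedge^{p+1}M_L\otimes L)=d\binom{r-1}{p+1}-(g-1)\binom{r}{p+1}$, and gives
\[
b_{p+1,1}(X,L)-b_{p,2}(X,L)=\chi\Bigl(\bigwedge^{p+1}M_L\otimes L\Bigr)-\binom{2p+4}{p+2}=\frac{p+1}{p+2}\binom{2p+2}{p+1}\,(2p+3-g).
\]
Your relation between $K_{p,1}$ and $K_{p-1,2}$ is on the wrong diagonal. This difference vanishes only when $g=2p+3$, i.e.\ $\deg L=2g$, which is the Farkas--Kemeny case of Theorem~\ref{thm:secant}(2); there a virtual-divisor argument of your type is available. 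For $p>\frac{g-3}{2}$ the difference is positive. Then $\{K_{p,2}(X,L)\neq 0\}$ is the non-surjectivity locus of a map of bundles on $\mathrm{Pic}^{g+2p+3}(X)$ whose source has strictly larger rank than its target, so its expected codimension is at least $2$. There is no virtual divisor and no Porteous-type class to compute. That the syzygy locus is a divisor at all is an excess phenomenon, and it is exactly what the theorem asserts. In particular, your plan never addresses the hard implication: that $(p+1)$-very ampleness of $L$ forces $K_{p,2}(X,L)=0$ on every curve with $\dim W^1_{p+2}(X)=2p-g+2$.

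You also misplace the role of the Brill--Noether hypothesis. It is needed to make the vanishing locus non-empty, not to control multiplicities. For instance, if $g=2p+3$ and $X$ carries a $g^1_{p+2}$, the Green--Lazarsfeld Non-Vanishing Theorem gives $K_{p+1,1}(X,L)\neq 0$, hence $K_{p,2}(X,L)\neq 0$, for all $L\in\mathrm{Pic}^{2g}(X)$. So properness must be derived from the hypothesis on the given $X$. Theorem~\ref{thm:secant}(1) only concerns a general curve, and ``$K_{p,2}(X,L)=0$ for general $L$'' is what has to be proved.

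There are two smaller errors:
\begin{itemize}
\item The class of $X_{p+2}-X_{g-p-3}$ is $\binom{g-1}{p+2}\theta$, not $\binom{g-2}{p+1}\theta$. This follows from Poincar\'e's formula and the Pontryagin product, the difference map being birational. For example, $g=5$, $p=1$ gives $4\theta$, not $3\theta$.
\item $\dim K_{p,2}(X,L)=1$ at a general point would not by itself give multiplicity one.
\end{itemize}

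If you want a class-comparison mechanism, it does work for a \emph{fixed} bundle whose Euler characteristic genuinely vanishes. Let $F:=\bigwedge^{p+2}M_{\omega_X}\otimes\omega_X\cong\bigwedge^{g-p-3}M_{\omega_X}^{\vee}$. It has rank $\binom{g-1}{p+2}$, and $F\otimes\xi$ has slope $g-1$ for $\xi\in\mathrm{Pic}^{2p+5-g}(X)$. Its theta locus contains $X_{p+2}-X_{g-p-3}$ and has the same class, so the two agree whenever the former is a divisor (compare \cite{FMP}). A proof along these lines would still need two things your proposal does not supply: an argument relating $K_{p,2}(X,\omega_X\otimes\xi)$ to such a theta divisor, and a use of the hypothesis on $W^1_{p+2}(X)$ to get properness.
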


A consequence of Theorem \ref{thm:GL_div} is that when $d=g+2p+3$, the Secant Conjecture holds for a general curve of genus $g$. 

\subsection{The Gonality Conjecture.}
From the Green--Lazarsfeld Non-Vanishing Theorem \cite{Gr}, it follows that for a non-special line bundle $L$ on $X$, one has $K_{r(L)-\mathrm{gon}(X), 1}(X,L)\neq 0$. The \emph{Gonality Conjecture} put forward by Green and Lazarsfeld in \cite{GL1} is the statement 
\begin{equation}\label{eq:gon_conj}
K_{h^0(L)-\mathrm{gon}(X), 1}(X,L)=0.
\end{equation}
The condition (\ref{eq:gon_conj}) is equivalent to the vanishing $K_{p,1}(X,L)=0$, for all $p\geq h^0(L)-\mathrm{gon}(X)$. The Gonality Conjecture therefore asserts that one can recognise the gonality of a curve in a precise way from the Betti diagram of any line bundle of sufficiently high degree.

\vskip 4pt

Using the interpretation of Koszul cohomology groups in terms of tautological bundles on symmetric products of curves, Ein--Lazarsfeld \cite{EL} showed that the Gonality Conjecture holds for \emph{every} curve $X$ and for \emph{every} line bundle of degree $\mbox{deg}(L)\gg 0$. Rathmann \cite{Rat} showed that (\ref{eq:gon_conj}) still holds when $\mbox{deg}(L)\geq 4g-3$, thus providing an effective version of (\ref{eq:gon_conj}). It has been showed in \cite[Theorem 0.1]{FK4}  that for a general $k$-gonal curve $X$ of genus $g$ one has that 
\begin{equation}\label{eq:generic_k}
K_{h^0(L)-k,1}(X,L)=0, \ \mbox{ for every } L\mbox{ with } \mbox{deg}(L)\geq 2g+k-1.
\end{equation} 
This degree, $2g+k-1$ is the smallest for which such a statement could hold. On each $k$-gonal curve $X$ there exist line bundles $L\in \mbox{Pic}^{2g+k-2}(X)$ for which the condition (\ref{eq:gon_conj}) does not hold.

\vskip 3pt

Niu--Park \cite{NP} solved the Effective Gonality Conjecture for arbitrary curves. Precisely, if $X$ is a $k$-gonal curve of genus $g$ which is not a smooth plane curve, then (\ref{eq:generic_k}) holds for every line bundle $L$ such that $\mbox{deg}(L)\geq 2g+k-1$. Their result is implied by a more general vanishing theorem, see \cite[Theorem 1.3]{NP}. Assuming $L$ and $B$ are line bundles on $X$ and $B$ is $p$-very ample, then if $h^1(X, L\otimes B^{\vee})\leq r(B)-p-1$, then $K_{p,1}(X,B,L)=0$. Just like in \cite{EL}, the proof of this result relies on interpreting the Koszul cohomology groups as geometric objects on symmetric products of $X$.

\subsection{The Betti diagram of a general $k$-gonal curve}

Let us consider a general $k$-gonal curve $X$ of genus $g$. From Green's Conjecture, which is known in this case \cite{Ap}, we have
$$K_{p,1}(X,\omega_X)=0 \ \ \mbox{ if and only if } \ \ \  p\geq g-k+1,$$
therefore determining the length of the linear of both rows of the resolution of $X$. Assume now that $X$ has non-maximal gonality, that is, $\mbox{gon}(X)\leq \frac{g+1}{2}$.  In this case, Green's Conjecture predicts the following resolution:

\begin{table}[htp!]
\begin{center}
\begin{tabular}{|c|c|c|c|c|c|c|c|c|c|}
\hline
$1$ & $2$ & $\ldots$ & $k-3$ & $k-2$  & $\ldots$ & $g-k$ & $g-k+1$ & $\ldots$ & $g-2$\\
\hline
$b_{1,1}$ & $b_{2,1}$ & $\ldots$ & $b_{k-3,1}$ & $b_{k-2,1}$ &  $\ldots$ & $b_{g-k,1}$ & $0$ &  $\ldots$ & $0$ \\
\hline
$0$ &  $0$ & $\ldots$ & $0$ & $b_{k-2,2}$ &  $\ldots$ & $b_{g-k,2}$ & $b_{g-k+1,2}$ & $\ldots$ & $b_{g-2,2}$\\
\hline
\end{tabular}
\end{center}
\end{table}

\vskip 4pt

We observe in this Betti table that $b_{p+1,1}(X,\omega_X)\cdot b_{p,2}(X,\omega_X)\neq 0$ for $k-2\leq p\leq g-k-1$. A general $k$-gonal curve $X$ is endowed with a cover $f\colon X\rightarrow \PP^1$ of degree $k$. This induces a $(k-1)$-dimensional scroll $Z\subseteq \PP^{g-1}$ containing the canonically embedded curve $X\subseteq \PP^{g-1}$ The Betti numbers of $(Z, \OO_Z(1))$ are well known, being computable via the Eagon-Northcott complex \cite{Sch1}. Since $X\subseteq Z\subseteq \PP^{g-1}$, it follows
\begin{equation}\label{gonericconj}
b_{p,1}(X,\omega_X)\geq b_{p,1}\bigl(Z,\OO_Z(1)\bigr)=p\cdot {g-k+1 \choose p+1}.
\end{equation}
One may have guessed that the inequality (\ref{gonericconj}) is an equality, but Bopp \cite{Bo} showed that for a general $5$-gonal curve of genus $g\gg 0$, if  $m:=\lceil \frac{g-1}{2}\rceil$, then $b_{m,1}(X,\omega_X)>b_{m,1}(Z, \OO_Z(1))$.

\begin{question}
What are the values $b_{p,1}(X,\omega_X)$ for a general $k$-gonal curve of genus $g$? It is shown in \cite[Theorem 0.4]{FK4} that 
$$b_{g-k,1}(X, \omega_X)=g-k.$$ This answers positively a conjecture of Schreyer's \cite{SSW}. For the Betti numbers $b_{p,1}(X, \omega_X)$ with $k-1\leq p\leq g-k-1$ there seems to be at the moment not even a prediction.
\end{question}

\subsection{Frobenius semistability and  Green's Conjecture in positive characteristic}
Suppose $X$ is a smooth algebraic curve over $\kk$, when $\mbox{char}(\kk)=p>0$. Set $\mathfrak{fr}\colon \kk\rightarrow \kk$ to be the Frobenius morphism and $X_1:=X\times_{\mathfrak{fr}} \mbox{Spec}(\kk)$. Intuitively, $X_1$ is the curve obtained from $X$ by raising all coefficients of the defining polynomials of $X$ to their $p$th power. We consider the relative Frobenius morphism 
$$F\colon X\longrightarrow X_1,$$
which is a cover of degree $p$. Following Raynaud \cite{Ray}, we define the bundle of \emph{locally exact differentials} on $X_1$ by the exact sequence
$$0\longrightarrow \OO_{X_1}\longrightarrow F_*\OO_X\longrightarrow B\longrightarrow 0.$$
Note that $B$ is a vector bundle of rank $p-1$ and via the \emph{Cartier operator} one constructs an anti-symmetric pairing $B\times B\rightarrow \omega_{X_1}$, in particular $B\cong B^{\vee}\otimes \omega_{X_1}$. The number $h^0(X_1,B)$ is called the \emph{a-invariant} of $X$ and satisfies the inequality $h^0(X_1,B)\leq g-f$, where $f$ is the \emph{p-rank} of the Jacobian of $X$. For a general curve $X$, we have $h^0(X_1,B)=0$. Ekedahl \cite{Ek} and Re \cite{Re} studied the Brill--Noether properties of $B$, for instance if $h^0(X_1,B)=g$, then $g\leq \frac{p(p-1)}{2}$.  We are however far from having a complete picture.

\vskip 4pt

It is proved in \cite[4.1.1]{Ray} that $B$ has a theta divisor in the sense of Raynaud, in particular it is stable. On the other hand, $B$ is not \emph{strongly stable}, that is, its Frobenius pullback destabilizes it and in fact $F^*(\omega_X)$ has a Harder--Narasimhan filtration
$$0\subseteq  B_p\subseteq B_{p-1}\subseteq \ldots \subseteq B_1=F^*B,$$
with $B_i/B_{i+1}\cong \omega_X^i$, for  $i=1, \ldots, p-1$.

\vskip 4pt

In a  related direction, Farkas--Larson \cite[Theorem 1.4]{FL} showed that the kernel bundle $M_{\omega_X}$ of a \emph{very general} curve of genus $g$ is strongly semistable, that is, the Frobenius pullbacks $(F^e)^*(M_{\omega_X})$ are semistable, for all $e\geq 1$. Note that even in genus $3$ it is clear that there exist smooth curves $X\subseteq \PP^2$ of genus $3$ for which $M_{\omega_X}$ is not strongly semistable, for instance the Fermat quartic in infinitely many characteristics, see \cite[Remark 3.8]{FL}. 

\begin{question}
Can one characterize in Brill--Noether-theoretic ways (also involving the vector bundle $B$) those curves $X$ for which the kernel bundle $M_{\omega_X}$ is strongly stable? Can one formulate in such terms a version of Green's Conjecture that should hold for all smooth curves of genus $g$?
\end{question}


\begin{thebibliography}{aaaaaaaa}
\bibitem[Al]{Al} J. Alexander,
{\em Topological invariants of knots and links},
Transactions of the American Math. Society \textbf{30} (1928), 275--306.
\bibitem[Ag]{Ag} D. Agostini, {\em{The Martens-Mumford Theorem and the Green-Lazarsfeld Secant Conjecture}},  Journal of Algebraic Geometry  \textbf{33} (2024), 629--654.
\bibitem[AFPRW1]{AFPRW1} M. Aprodu, G. Farkas, \c S. Papadima, C. Raicu,
and J. Weyman, {\em Koszul modules and Green's Conjecture},
Invent. Math. \textbf{218} (2019), 657--720.    
\bibitem[AFPRW2]{AFPRW2}
M. Aprodu, G. Farkas, \c S. Papadima, C. Raicu, and J. Weyman,
{\em Topological invariants of groups and {K}oszul modules}, Duke Math. J.
\textbf{171} (2022),  2013--2046.
\bibitem[AFRW]{AFRW}
M. Aprodu, G. Farkas, C. Raicu, and J. Weyman,
{\em Koszul modules with vanishing resonance in algebraic geometry},
Selecta Math. \textbf{30} (2024), Paper No.~24, 33 pp. 
\bibitem[AFRS1]{AFRS1}
M. Aprodu, G. Farkas, C. Raicu, and A. Suciu,
{\em Reduced resonance schemes and Chen ranks}, 
J. Reine Angew. Math. \textbf{814} (2024), 205--240.
\bibitem[AFRS2]{AFRS2}
M. Aprodu, G. Farkas, C. Raicu, and A. Suciu,
{\em{The effective Chen ranks conjecture}}, arXiv:2512.10160.
\bibitem[Ap]{Ap} M. Aprodu, {\em{Remarks on syzygies of $d$-gonal curves}}, Math. Research Letters \textbf{12} (2005), 387--400.
\bibitem[AF1]{AF1} M. Aprodu and G. Farkas, {\emph{The Green Conjecture for smooth curves lying on arbitrary $K3$ surfaces}}, Compositio Math. \textbf{147} (2011), 839--851.
\bibitem[AF2]{AF2} M. Aprodu and G. Farkas, {\emph{Koszul cohomology and applications to moduli}}, in: Grassmanians, Moduli Spaces and Vector Bundles (E. Previato ed.), Clay Mathematics Proceedings, Vol. 14, 2011, 25-50.
\bibitem[ACGH]{ACGH} E. Arbarello, M. Cornalba, P. A. Griffiths and J. Harris, {\emph{Geometry of algebraic curves}}, Volume I, Grundlehren der mathematischen Wissenschaften \textbf{267},
Springer-Verlag (1985).
\bibitem[ABS]{ABS} E. Arbarello, A. Bruno and E. Sernesi, {\emph{On hyperplane sections of $K3$ surfaces}}, Algebraic Geometry \textbf{5} (2017), 562--596.
\bibitem[Bo]{Bo} C. Bopp, {\emph{Syzygies of $5$-gonal curves}}, Documenta Math.  \textbf{20} (2015),  1055--1069. 
\bibitem[Cam1]{Cam} F. Campana, {\em Remarques sur les groupes de K\"ahler nilpotents}, Annales Sci. Ecole Normale Sup. \textbf{28} (1995), 307--316.
\bibitem[Cam2]{Cam2} F. Campana, {\em Orbifolds, special varieties and classification theory}, Annales Inst. Fourier
\textbf{54} (2004), 499--630.
\bibitem[Cat]{Cat} F. Catanese,  {\em Moduli and classification of irregular K\"ahler manifolds (and algebraic varieties) with Albanese general type fibrations}, Inventiones Math.
\textbf{104} (1991), 263--289.
\bibitem[Ch1]{Ch} K.~T.~Chen,
{\em Integration in free groups},
Annals of Math. \textbf{54} (1951), 147--162.
\bibitem[Ch2]{Ch2} K.~T.~Chen, {\em Extension of $C^{\infty}$ function algebra by integrals and Malcev completion of $\pi_1$}, Advances in Math. \textbf{23} (1977), 181--210.
\bibitem[CEFS]{CEFS} A. Chiodo, D. Eisenbud, G. Farkas and F.-O. Schreyer, {\em{Syzygies of torsion bundles and the geometry of the level $\ell$ modular varieties over $\overline{\mathcal{M}}_g$}}, Inventiones Math. \textbf{194} (2013), 73--118.
\bibitem[CS]{CS} D. Cohen and H. Schenck,
{\em Chen ranks and resonance},
Advances Math. \textbf{285} (2015), 1--27.  
\bibitem[CFVV]{CFVV} E. Colombo, G. Farkas, A. Verra and C. Voisin, \emph{Syzygies of Prym and paracanonical curves of genus $8$}, \'{E}pijournal G\'{e}om. Alg\'{e}brique \textbf{1} (2017), Art 23.

\bibitem[DHP]{DHP} A. Dimca, R. Hain, \c{S}. Papadima,
{\em The abelianization of the Johnson kernel},
Journal of the  European Math. Soc. \textbf{16} (2014), 805--822.
\bibitem[Del]{Del} T. Delzant, {\em L'invariant de Bieri-Neumann-Strebel des groupes fondamentaux des vari\'et\'es k\"ahleriennes}
Math. Annalen \textbf{348} (2010), 119--125.
\bibitem[DPS]{DPS} A. Dimca, \c{S}. Papadima, and A. Suciu, {\em Topology and geometry of cohomology jump loci},
Duke Math. Journal \textbf{148} (2009), 405--457.
\bibitem[EL1]{EL} L. Ein and R. Lazarsfeld, {\em{The gonality conjecture on syzygies of algebraic curves of
              large degree}}, Publ. Math. Inst. Hautes \'Etudes Sci. \textbf{122} (2015), 301--313.
\bibitem[EL2]{EL2} L. Ein and R. Lazarsfeld, {\em{Tangent developable surfaces and the equations defining algebraic curves}}, Bulletin Amer. Math. Soc. \textbf{57}  (2020), 23–-38. 
\bibitem[ENP]{ENP} L. Ein, W. Niu, and J. Park, {\em{Singularities and syzygies of secant varieties of nonsingular projective curves}}, Inventiones Math., \textbf{222} (2020), 615--665.                  
\bibitem[Eis1]{Eis1}
D. Eisenbud,
{\em Green's conjecture: An orientation for algebraists},
in: Free resolutions in commutative algebra and algebraic geometry (Sundance 1990), Res. Notes Math. \textbf{2}, Jones and Bartlett, Boston (1992), 51--78.
 \bibitem[Eis2]{Eis2} D. Eisenbud, {\em{The geometry of syzygies}}, Graduate Texts in Mathematics \textbf{229}, Springer-Verlag, New York, 2005.
\bibitem[EH]{EH} D. Eisenbud and J. Harris, {\em{Limit linear series: Basic theory}}, Inventiones Math. \textbf{85} (1986), 337-371.
\bibitem[ES]{ES}
D. Eisenbud and F.-O. Schreyer,
{\em Equations and syzygies of K3 carpets and unions of scrolls},  Acta Math. Vietnamica \textbf{44} (2019),  3--29.
\bibitem[Ek]{Ek} T. Ekedahl, {\em{  Torsten Ekedahl: On supersingular curves and abelian varieties}},
Math. Scandinavica \textbf{60} (1987), 151--178.           
\bibitem[FY]{FY} M. Falk and S. Yuzvinsky,
{\em Multinets, resonance varieties, and pencils of plane curves},
Compositio Math. \textbf{143} (2007), 1069--1088.
\bibitem[FR]{FR} M. Falk and R. Randell, {\em{The lower central series of a fibre-type arrangement}}, Invent. Math. \textbf{82} (1985), 77--88. 
\bibitem[F1]{F1} G. Farkas, {\em{Progress on syzygies of algebraic curves}}, Lecture Notes of the Unione Matematica Italiana 21 (Moduli of Curves, Guanajuato 2016) 107--138.
\bibitem[F2]{F2} G. Farkas, {\em{Difference varieties and the Green-Lazarsfeld Secant Conjecture}},  \'{E}pijournal de G\'{e}om\'{e}trie Alg\'{e}brique 2024 (special volume in honor of Claire Voisin), 11658.   
\bibitem[FL]{FL} G. Farkas and E. Larson, {\em{The Minimal Resolution Property for points on generic curves}}, Annales Sci. de l'Ecole Normale Sup\'erieure \textbf{58} (2025), 433--462.     
\bibitem[FMP]{FMP} G. Farkas, M. Musta\c{t}\u{a} and M. Popa, {\em{Divisors on
$\cM_{g, g+1}$ and the Minimal Resolution Conjecture for points on
canonical curves}}, Annales Sci. de L'\'Ecole  Normale Sup\'erieure \textbf{36} (2003), 553--581.
\bibitem[FK1]{FK1} G. Farkas and M. Kemeny, \emph{The generic Green-Lazarsfeld Conjecture}, Inventiones Math. \textbf{203} (2016), 265--301.
\bibitem[FK2]{FK2} G. Farkas and M. Kemeny, \emph{The Prym-Green Conjecture for line bundles of high order}, Duke Mathematical
Journal (2017), Duke Math. Journal \textbf{166} (2017), 1103--1124.
\bibitem[FK3]{FK3} G. Farkas and M. Kemeny, \emph{Minimal resolutions of paracanonical curves of odd genus}, Geometry \& Topology 22 (2018), 4235--4257.
\bibitem[FK4]{FK4}  G. Farkas and M. Kemeny, {\em{Linear Syzygies of Curves with prescribed gonality}}, Advances in Math. \textbf{356} (2019), 106810.
\bibitem[GKP]{GKP}  D. Greb, S. Kebekus, and T. Peternell, {\em Singular Spaces with Trivial Canonical
Class}, in: Minimal
Models and Extremal Rays, Advanced Studies in Pure Math. \textbf{70} (2016), 67--113.
\bibitem[Gr]{Gr} M. Green, {\em{Koszul cohomology and the cohomology of projective varieties}}, Journal of Differential Geometry \textbf{19} (1984), 125--171.
\bibitem[GL1]{GL1} M. Green and R. Lazarsfeld, {\em{On the projective normality of complete linear series on an algebraic curve}}, Inventiones Math. \textbf{83} (1986), 73--90.
\bibitem[GL2]{GL2} M. Green and R. Lazarsfeld,
\emph{Higher obstructions to deforming cohomology groups of line bundles}, 
J. Amer. Math. Soc. \textbf{4} (1991), 87--103. 
\bibitem[GL3]{GL3} M. Green and R. Lazarsfeld, {\em{Some results on the syzygies of finite sets and algebraic curves}}, Composition Math.\ \textbf{67} (1988), no.~3, 301--314.   
\bibitem[Hil]{Hil} D. Hilbert, {\em{\"Uber die Theorie der algebraischen Formen}}, Mathematische Annalen \textbf{36} (1890), 473--530.
\bibitem[HR]{HR} A. Hirschowitz and S. Ramanan, {\em{New evidence for Green's Conjecture on syzygies of canonical curves}}, Annales Sci. de l'\'Ecole Normale Sup\'erieure \textbf{31} (1998), 145--152.
\bibitem[J1]{J1} D. Johnson,
{\em The structure of the Torelli group {\rm I}: A finite set of generators for $\mathcal{I}$},
Annals of Math. \textbf{118} (1983), 423--442.
\bibitem[J2]{J2} D. Johnson,
{\em The structure of the Torelli group {\rm II}: A characterization of the group generated by twists on bounding curves},
Topology \textbf{24} (1985), 113--126.
\bibitem[K1]{K1} M. Kemeny, {\em{Projecting syzygies of curves}}, Algebraic Geometry \textbf{7} (2020),  561--580.
\bibitem[K2]{K2} M. Kemeny, {\em{Universal Secant Bundles and Syzygies of Canonical Curves}}, Inventiones Math. \textbf{223} (2021), 995--1026.
\bibitem[Ko]{Ko} J. Koll\'ar, {\em{Shafarevich Maps and Automorphic Forms}}, Princeton
University Press, 1995.
\bibitem[MW]{MW} I. Madsen and M. Weiss, {\em{The stable moduli space of Riemann
surfaces: Mumford’s conjecture}}, Annals of Math. \textbf{165} (2007), 843--941.
\bibitem[Mag]{Mag} W.~Magnus,
{\em \"{U}ber $n$-dimensionale Gittertransformationen},
Acta Math. \textbf{64} (1934), 353--367.
\bibitem[Mas]{Mas}  W.~Massey,
{\em Completion of link modules},
Duke Math. Journal \textbf{47} (1980), 399--420.
\bibitem[NP]{NP} W. Niu and J. Park, {\em{Effective gonality theorem of weight one-syzygies of algebraic curves}}, arXiv:2405.1344.
\bibitem[OS]{OS} P. Orlik and L. Solomon,
{\em Combinatorics and topology of complements of
hyperplanes}, Invent. Math. \textbf{56} (1980), 167--189.
\bibitem[PS1]{PS1} \c{S}. Papadima and A. Suciu, {\emph{Chen {L}ie algebras}}, Int. Math. Res. Not. \textbf{2004}
(2004),  1057--1086.
\bibitem[PS2]{PS2} \c S. Papadima and A. Suciu, {\emph{Vanishing resonance and representations of {L}ie algebras}},
J. Reine Angew. Math. \textbf{706} (2015), 83--101.
\bibitem[Pa]{Pa} J. Park, {\em{Syzygies of tangent developables surfaces and $K3$ carpets via secant varieties}}, Algebra \& Number Theory \textbf{19} (2025), 1029--1047. 
\bibitem[PPN]{PPN}  C. Pauly and A. Pe\'{o}n-Nieto,
{\em Very stable bundles and properness of the Hitchin map},
Geom. Dedicata \textbf{198} (2019), 143--148.  
\bibitem[Pe]{Pe} I. Peeva, {\em{Hyperplane arrangements and linear strands in resolution}}, Transactions Amer. Math. Soc. \textbf{355} (2002), 609--618.  
\bibitem[RS]{RS} C. Raicu and S. Sam, {\em{Bi-graded Koszul modules, $K3$ carpets, and Green's conjecture}}, Compositio Math. \textbf{158} (2022), 33--56.
\bibitem[RV]{RV} C. Raicu and K. VandeBogert, {\em{Stable sheaf cohomology on flag varieties}}, arXiv:2306.14282, to appear in the Journal American Math. Soc.
\bibitem[Rat]{Rat} J. Rathmann, {\em{An effective bound for the gonality conjecture}}, in: Varieties, Polyhedra, Computation, EMS Series of Congress Reports (2025), 567--578.
\bibitem[Ray]{Ray} M. Raynaud, {\em{Sections des fibr\'es vectoriels sur
une courbe}}, Bulletin de la Soci\'et\'e Math. de France \textbf{110} (1982), 103--125.  
\bibitem[Re]{Re} R. Re, {\em The rank of the Cartier operator and linear systems on curves}, J. Algebra \textbf{236} (2001), 80--92.  
\bibitem[Ryb]{Ryb} G.~Rybnikov, 
{\em On the fundamental group of the complement of a complex
hyperplane arrangement},  Funct. Anal. Appl.  
\textbf{45} (2011), 137--148. 
\bibitem[Sal]{Sal} M. Salvetti, \emph{Topology of the complement of real hyperplanes in $\mathbb C^n$}, Invent.
Math. \textbf{88} (1987), 603--618.
\bibitem[SS1]{SS1} H. Schenck and A. Suciu,
{\em Lower central series and free resolutions of hyperplane 
arrangements}, Transactions Amer. Math. Soc. \textbf{354} (2002), 
 3409--3433. 
\bibitem[SS2]{SS2} H. Schenck and A. Suciu,
{\em Resonance, linear syzygies, {C}hen groups, and the
{B}ernstein--{G}elfand--{G}elfand correspondence},
Transactions Amer. Math. Soc. \textbf{358} (2006),  2269--2289.
\bibitem[Sch1]{Sch1} F.-O. Schreyer, {\em{Syzygies of canonical curves and special linear series}}, Math. Annalen \textbf{275} (1986), 105-137.
\bibitem[SSW]{SSW} J. Schicho, F.-O. Schreyer and M. Weimann, {\em{Computational aspects of gonal maps and radical parametrization of curves}}, Appl.\ Algebra Engrg.\ Comm.\ Comput.\ \textbf{24} (2013), 313-341.
\bibitem[Su1]{Su1} A. Suciu, {\emph{Fundamental groups of line arrangements: enumerative aspects}},
in: {\em Advances in algebraic geometry motivated by physics} ({L}owell, {MA}, 2000),
43--79, Contemp. Math., vol. 276, Amer. Math. Soc., Providence, RI, 2001.
\bibitem[Su2]{Su2}  A. Suciu,
{\em Hyperplane arrangements and {M}ilnor fibrations},
Ann. Fac. Sci. Toulouse Math. \textbf{23} (2014), 417--481.
\bibitem[Sul]{Sul} D. Sullivan, 
{\emph{Infinitesimal computations in topology}}, Inst. Hautes \'Etudes Sci.
Publ. Math. (1977),  269--331.
\bibitem[V1]{V1} C. Voisin, {\em{Green's generic syzygy conjecture
for curves of even genus lying on a $K3$ surface}}, Journal of
European Math. Soc. \textbf{4} (2002), 363--404.
\bibitem[V2]{V2} C. Voisin, {\em{Green's canonical syzygy conjecture for generic curves of odd genus}},
Compositio Math. \textbf{141} (2005), 1163--1190.
\bibitem[Yuz]{Yuz} S. Yuzvisnky, {\em{Resonance varieties of arrangemenet complements}}, in: {\em Arangements of hyperplanes}, Advanced Studies in Pure Math. \textbf{62} (2012), 553--570. 
\bibitem[W]{W}
J. Wahl,
{\em The Jacobian algebra of a graded Gorenstein singularity},  Duke Math. J. \textbf{55} (1987), 843--871.

\end{thebibliography}
\end{document}